\newtheoremstyle{plain2}{\topsep}{\topsep}%
     {\itshape}
     {}
     {\bfseries}
     {.}
     {.5em}
     {\thmnumber{(#2)}\thmname{ #1}\thmnote{ #3}}
\newtheorem{teo}{Theorem}[section]
\newtheorem{prop}[teo]{Proposition}
\newtheorem{defin}[teo]{Definition}
\newtheorem{coro}[teo]{Corollary}
\newtheorem{lemma}[teo]{Lemma}
\newtheorem*{key}{Keywords}
\newtheoremstyle{definition2}{\topsep}{\topsep}%
     {}
     {}
     {\bfseries}
     {.}
     {.5em}
     {\thmnumber{(#2)}\thmname{ #1}\thmnote{ #3}}
\theoremstyle{definition}
\newtheorem{rem}[teo]{Remark}
\def\N{\mathbb{N}}
\def\R{\mathbb{R}}
\def\a{\alpha}
\def\Deh{\mathscr{D}}
\def\Ceh{\mathcal{C}}
\def\Mah{\mathcal{M}}
\def\Seh{\mathcal{S}}
\def\Feh{\mathcal{F}}
\def\Geh{\mathcal{G}}
\def\Hh{\mathscr{H}}
\def\Ss{\mathscr{S}}
\def\Bger{\mathfrak{B} }
\def\Lger{\mathfrak{L}}
\title{\sc Regularity of the nodal set of segregated critical configurations under a weak reflection law
}
\author{Hugo Tavares and Susanna Terracini}
\begin{document}
\maketitle

\begin{abstract}
We deal with a class of  Lipschitz  vector functions $U=(u_1,\ldots,u_h)$ whose components are non negative, disjointly supported and verify an elliptic equation on each support. Under a weak formulation of a reflection law, related to the Poho\u zaev identity, we prove that the nodal set is a collection of  $C^{1,\alpha}$ hyper-surfaces (for every $0<\alpha<1$), up to a residual set with small Hausdorff dimension. This result applies
 to the asymptotic limits of reaction-diffusion systems with strong competition interactions, to optimal partition problems involving eigenvalues, as well as to segregated standing waves for Bose-Einstein condensates in multiple  hyperfine spin states.
\end{abstract}

\begin{key}
Elliptic Systems, Free Boundary Problems, Monotonicity Formulae, Reflection Principle.
\end{key}


\section{Introduction}

\subsection{Statement of the results}

Let $\Omega$ be an open bounded subset of $\R^N$, with $N\geq 2$.
Our main interest is the study of the regularity of the nodal set $\Gamma_U=\{x\in \Omega:\ U(x)=0\}$ of segregated configurations $U=(u_1,\ldots,u_h)\in (H^1(\Omega))^h$ associated with systems of semilinear elliptic equations. The main result of this paper is the following.

\begin{teo}\label{teo:main_result}

Let  $U=(u_1,\ldots,u_h)\in (H^1(\Omega))^h$ be a vector of non negative  Lipschitz functions in $\Omega$,  having mutually disjoint supports:  $u_i\cdot u_j\equiv 0$ in $\Omega$ for $i\neq j$. Assume that  $U\not\equiv 0$ and
   \begin{equation*}-\Delta u_i=f_i(x,u_i) \qquad \qquad {\rm whenever }\;u_i >0\;, \ \ i=1,\ldots,h,\end{equation*}
where $f_i:\Omega\times\R^+\rightarrow \R$ are $C^1$ functions such that $f_i(x,s)=O(s)$ when $s\rightarrow 0$, uniformly in $x$. Moreover, defining for every $x_0\in \Omega$ and $r\in (0,\text{dist}(x_0,\partial \Omega))$ the energy
          \begin{equation}\nonumber
          \tilde E(r)=\tilde E(x_0,U,r)=\frac{1}{r^{N-2}}\int_{B_r(x_0)}|\nabla U|^2\;,
          \end{equation}
assume that  $\tilde E(x_0,U,\cdot)$ is an absolutely continuous function of $r$ and that it satisfies the following differential equation
   \begin{equation*}
   \frac{d}{dr}\tilde E(x_0,U,r)= \frac{2}{r^{N-2}}\int_{\partial B_r(x_0)}  (\partial_\nu U )^2\, d\sigma +\frac{2}{r^{N-1}}\int_{B_r(x_0)} \sum_{i}f_i(x,u_i)\langle \nabla u_i, x-x_0\rangle.
   \end{equation*}

Let us consider the nodal set $\Gamma_U=\{x\in \Omega:\ U(x)=0\}$. Then we have\footnote{Here, $\Hh_\text{dim}(\cdot)$ denotes the Hausdorff dimension of a set.} $\Hh_\text{dim}(\Gamma_U)\leq N-1$. Moreover there exists a set $\Sigma_U\subseteq \Gamma_U$, relatively open in $\Gamma_U$, such that
\begin{itemize}
\item $\Hh_\text{dim} (\Gamma_U\setminus \Sigma_U)\leq N-2$, and if $N=2$ then actually $\Gamma_U\setminus \Sigma_U$ is a locally finite set;
\item $\Sigma_U$ is a collection of  hyper-surfaces of class $C^{1,\alpha}$ (for every $0<\alpha<1$). Furthermore for every $x_0\in \Sigma_U$
\begin{equation}\label{eq:reflection_principle}\lim_{x\rightarrow x_0^+} |\nabla U(x)|=  \lim_{x\rightarrow x_0^-} |\nabla U(x)|\neq 0,\end{equation}
\end{itemize}
where the limits as $x\to x_0^\pm$ are taken from the opposite sides of the hyper-surface. Furthermore,  if $N=2$ then $\Sigma_U$ consists in a locally finite collection of curves meeting with equal angles at singular points.
\end{teo}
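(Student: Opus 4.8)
The plan is to follow the classical blow-up/monotonicity strategy for free boundary problems, using the prescribed differential identity for $\tilde E$ as the engine that produces an almost-monotone Almgren-type quotient. First I would introduce, for $x_0\in\Gamma_U$, the frequency function
\begin{equation*}
N(x_0,U,r)=\frac{r\int_{B_r(x_0)}|\nabla U|^2}{\int_{\partial B_r(x_0)}U^2\,d\sigma},
\end{equation*}
and show, combining the given ODE for $\tilde E(x_0,U,r)$ with the assumption $f_i(x,s)=O(s)$ and the Lipschitz bound on $U$, that $r\mapsto e^{Cr}N(x_0,U,r)$ is monotone nondecreasing for small $r$ (the perturbation terms involving $f_i$ are controlled because $|\langle\nabla u_i,x-x_0\rangle|\le Lr$ and $|f_i(x,u_i)|\le C|u_i|\le CLr$, giving a volume term of order $r^{N}$, subordinate to the leading $r^{N-1}$ on $\partial B_r$). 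Hence the limit $N(x_0,U,0^+)=:d(x_0)$ exists; standard arguments (Federer dimension reduction on blow-up limits, which are homogeneous harmonic functions with disjoint supports on each component) force $d(x_0)\ge 1$, and the nodal set splits according to whether $d(x_0)=1$ or $d(x_0)>1$.

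Next I would carry out the blow-up analysis at points of multiplicity one. Define the rescalings $U_{x_0,r}(x)=U(x_0+rx)/\big(\frac1{r^{N-1}}\int_{\partial B_r(x_0)}U^2\big)^{1/2}$. Using the almost-monotonicity and the compactness results that must have been established earlier in the paper (uniform Lipschitz bounds, strong $H^1_{\mathrm{loc}}$-convergence of blow-up sequences, and the fact that the limiting differential identity passes to the limit), any blow-up limit is a nonzero, $1$-homogeneous, globally Lipschitz function $V=(v_1,\ldots,v_h)$ with disjoint supports, each $v_i$ harmonic on its support, satisfying the exact Pohozaev/reflection identity with no remainder. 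A classification lemma — which I expect is proved earlier via the exact monotonicity formula — shows that such a $V$ has at most two nontrivial components and, after rotation, equals $(x_N^+, x_N^-)$ up to scaling; in particular $|\nabla V|$ is continuous and nonzero across $\{x_N=0\}$. This yields \eqref{eq:reflection_principle} at every $x_0$ with $d(x_0)=1$, and moreover gives a uniqueness-of-blow-up statement: via a Weiss-type monotonicity formula (again powered by the given ODE) one shows the blow-up limit is unique and depends continuously on $x_0$, so the "tangent hyperplane" $T_{x_0}$ varies continuously; standard $\epsilon$-regularity/improvement-of-flatness then upgrades $\Sigma_U:=\{x_0\in\Gamma_U:\ d(x_0)=1\}$ to a $C^{1,\alpha}$ hypersurface for every $\alpha<1$, and openness of $\Sigma_U$ in $\Gamma_U$ follows from upper semicontinuity of $x_0\mapsto d(x_0)$.

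It then remains to bound the singular set $\Gamma_U\setminus\Sigma_U=\{d(x_0)\ge 1+\delta_0\}$ for some gap $\delta_0>0$ coming from the classification. Here I would run Federer's dimension-reduction argument: the failure of the stratum of dimension $k$ to be small would produce, by taking successive blow-ups, a homogeneous solution on $\R^N$ invariant under a $k$-dimensional subspace, i.e. a homogeneous solution on $\R^{N-k}$ of multiplicity $\ge 1+\delta_0$; combined with the multiplicity-one classification in low dimension (in $\R^1$ every such solution has degree $1$, and in $\R^2$ the only homogeneous configurations are the "equal-angle" ones, whose singularity is isolated) this forces $\Hh_{\mathrm{dim}}(\Gamma_U\setminus\Sigma_U)\le N-2$, and when $N=2$ the singular set is discrete, hence locally finite. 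The planar description — curves meeting at equal angles — comes from the explicit list of $1$-homogeneous harmonic functions on $\R^2$ with disjoint positive supports, namely $r^{k/2}$-type angular profiles supported on sectors of opening $2\pi/k$, which is exactly the equal-angle configuration. The main obstacle, I expect, is establishing the almost-monotonicity of $N(x_0,U,\cdot)$ with error terms that are \emph{uniform} in $x_0$ in a neighborhood, and the attendant compactness needed so that the clean limiting identity (no $f_i$ remainder) holds for blow-up limits — this is where the precise form of the assumed differential equation for $\tilde E$, rather than a mere one-sided inequality, is essential, both to get the Pohozaev identity in the limit and to set up the Weiss monotonicity that delivers uniqueness of blow-ups.
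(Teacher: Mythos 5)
Your overall architecture (Almgren-type almost-monotonicity driven by the assumed ODE for $\tilde E$, blow-up compactness, Federer dimension reduction) matches the paper's. But there is a genuine gap at the central point of the argument: the classification of homogeneous blow-up limits. You write that ``a classification lemma --- which I expect is proved earlier via the exact monotonicity formula --- shows that such a $V$ has at most two nontrivial components and, after rotation, equals $(x_N^+,x_N^-)$'', and likewise you invoke ``a gap $\delta_0>0$ coming from the classification''. In the absence of an energy-minimality assumption (the whole point of this paper, as opposed to Caffarelli--Lin), neither of these facts follows from the monotonicity formula alone, and neither is elementary in dimension $N\geq 3$. The paper devotes all of Section \ref{sec:iteration_argument} to exactly this: Lemma \ref{lemma:claim} is proved by induction on the dimension, performing a \emph{second} blow-up at points $y_0\in\Gamma_{\bar U}\cap S^{N-1}$ of the first blow-up, showing via homogeneity that the second blow-up depends on only $N-1$ variables (Lemma \ref{lemma:V_does_not_depend_on_x_N}), applying the inductive hypothesis there, and only then concluding that $\Gamma_{\bar U}\cap S^{N-1}$ is a closed $(N-2)$-manifold so that $\R^N\setminus\Gamma_{\bar U}$ has two components and $\alpha\in\N$. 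Your proposal has no substitute for this step; the eigenvalue/spherical-symmetrization argument you allude to only handles the case of three or more components and only delivers the gap in $N=2$ directly.

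Secondly, your route to $C^{1,\alpha}$ regularity of $\Sigma_U$ --- a Weiss-type monotonicity formula giving uniqueness of blow-ups, then $\epsilon$-regularity/improvement of flatness --- is not what the paper does, and you give no indication of how to set up a Weiss functional without minimality; this is left entirely unsubstantiated. The paper instead shows that property (P) forces $\Gamma_U$ to be Reifenberg flat near $\Sigma_U$ (Lemma \ref{lemma:flatness_condition}), deduces a local separation into exactly two phases $u,v$ (Proposition \ref{prop:local_separation_property}), proves via the Pohozaev computation that the two boundary measures coincide, $\lambda=\mu$ (Lemma \ref{lemma:reflection_principle} and Theorem \ref{teo:global_equation}), so that $w=u-v$ solves a genuine semilinear equation $-\Delta w=f(x,w^+)-g(x,w^-)$ in a full ball; then $C^{1,\alpha}$ regularity of $\Gamma_U$ follows from elliptic regularity for $w$, the nondegeneracy $\nabla w(x_0)\neq 0$ (from the degree-one homogeneity of the blow-up), and the implicit function theorem. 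This route avoids both uniqueness of blow-ups and improvement of flatness, and is what makes the argument work without minimality. You would need either to supply the Weiss/flatness machinery in this generality or to adopt the separation-plus-reflection argument.
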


The regularity of the nodal set can be extended  up to the boundary under appropriate assumptions (see Remark \ref {rem:global_Lipschitz}).  To proceed, it is convenient to group the vector functions satisfying the assumptions of Theorem \ref{teo:main_result} in the following class.

\begin{defin}\label{defin:class_G}
We define  the class $\Geh(\Omega)$ as the set of functions $U=(u_1,\ldots,u_h)\in (H^1(\Omega))^h$, whose components are all non negative and Lipschitz continuous in the interior of $\Omega$, and such that $u_i\cdot u_j\equiv 0$ in $\Omega$ for $i\neq j$. Moreover, $U\not\equiv 0$ and it solves a system of the type
   \begin{equation}\label{eq:system_u_i}-\Delta u_i=f_i(x,u_i)-\mu_i \qquad \qquad {\rm in } \ \Deh'(\Omega)=(C^{\infty}_\text{c}(\Omega))',\ \ i=1,\ldots,h, \end{equation}
where
\begin{itemize}
     \item[(G1)] $f_i:\Omega\times\R^+\rightarrow \R$ are $C^1$ functions such that $f_i(x,s)=O(s)$ when $s\rightarrow 0$, uniformly in $x$;
\end{itemize}
\begin{itemize}
     \item[(G2)] $\mu_i\in \Mah(\Omega)=(C_0(\Omega))'$ are some nonnegative Radon measures, each supported on the nodal set $\Gamma_U=\{x\in \Omega:\ U(x)=0\}$,   \end{itemize}
and moreover
   \begin{itemize}
     \item[(G3)]  associated to system \eqref{eq:system_u_i}, if we define for every $x_0\in \Omega$ and $r\in (0,\text{dist}(x_0,\partial \Omega))$ the quantity
          \begin{equation}\nonumber
          \tilde E(r)=\tilde E(x_0,U,r)=\frac{1}{r^{N-2}}\int_{B_r(x_0)}|\nabla U|^2,
          \end{equation}
 then $\tilde E(x_0,U,\cdot)$ is an absolutely continuous function of $r$ and
   \begin{equation}\label{eq:tilde_E'_expression}
   \frac{d}{dr}\tilde E(x_0,U,r)= \frac{2}{r^{N-2}}\int_{\partial B_r(x_0)}  (\partial_\nu U )^2\, d\sigma +\frac{2}{r^{N-1}}\int_{B_r(x_0)} \sum_{i}f_i(x,u_i)\langle \nabla u_i, x-x_0\rangle.
   \end{equation}
\end{itemize}
\end{defin}
To check the equivalence between the two sets of assumptions, we observe that equation \eqref{eq:system_u_i} together with (G2) yield that $-\Delta u_i=f_i(x,u_i)$ over the set $\{u_i>0\}$. Reciprocally, if such equation holds in $\{u_i>0\}$ then \eqref{eq:system_u_i} holds in the whole $\Omega$ for a measure $\mu_i$ concentrated in $\Gamma_U$ (a proof of this fact will be provided in Lemma \ref{lemma:ext_by_0} in a similar situation). We will work from now on with this second formulation of the assumptions.

\noindent \textbf{Notations.} For any vector function $U=(u_1,\ldots,u_h)$ we define $\nabla U=(\nabla u_1,\ldots, \nabla u_h)$, $|\nabla U|^2=|\nabla u_1|^2+\ldots+|\nabla u_h|^2$, $(\partial_\nu U)^2=(\partial_\nu u_1)^2+\ldots+(\partial_\nu u_h)^2$ and $U^2=u_1^2+\ldots+u_h^2$. Moreover,
$F(x,U)=(f_1(x,u_1),\ldots,f_h(x,u_h))$. We will denote by $\{U>0\}$ the set $\{x\in \Omega:\ u_i(x)>0 \text{ for some } i \}$. The usual scalar product in $\R^N$ will be denoted by $\langle\cdot, \cdot \rangle$. Hence, with these notations, $\langle F(x,U),U\rangle=\sum_i f_i(x,u_i)u_i$ and $\langle U,\partial_\nu U \rangle= \sum_i u_i (\partial_\nu u_i)$ for instance.

\begin{rem}
(a) It is easily checked that  equation \eqref{eq:tilde_E'_expression} always holds for balls lying entirely inside one of the component supports,  as a consequence of the elliptic equation \eqref{eq:system_u_i} (see also \S\ref{subsect:heuristic}). Hence, for our class systems, (G3) represents the only interaction between the different components $u_i$  through the common boundary of their supports; as we are going to discuss in \S\ref{subsect:heuristic} this can be seen as a weak form of a reflection property through the interfaces.  Although this hypothesis may look weird and may seem hard to check in applications, it has the main advantage to  occur naturally in many situations where the vector $U$ appears as a limit configuration in problems of spatial segregation. It has to be noted indeed that a form of \eqref{eq:tilde_E'_expression} always holds for solutions of systems of interacting semilinear equations and that it persists under strong $H^1$ limits (see \S \ref{sec:applications}).

(b) Theorem \ref{teo:main_result} applies to the nodal components of solutions to a single semilinear elliptic equation of the form $-\Delta u=f(u)$. Hence, in a sense, our work generalizes \cite{Lin,HHHN:1999}. In the paper \cite{CL}, Caffarelli and Lin proved that the same conclusion of Theorem \ref{teo:main_result} holds for vector functions $U$ minimizing Lagrangian functional associated with the system. They also proved that equation \eqref{eq:tilde_E'_expression} holds for such energy minimizing configurations. On the other hand, at the end of this paper we show that \eqref{eq:tilde_E'_expression} is fullfilled also for strong limits to competition--diffusion systems, both those possessing a variational structure and those with Lotka-Volterra type interactions (see  \S \ref{sec:applications} for some applications of Theorem \ref{teo:main_result}). Inspired by our recent work \cite{uniform_holder} written in collaboration with Noris and Verzini, we found that property (G3) is a suitable substitute for the minimization property.

(c) Our theorem extends also to sign changing, complex and vector valued functions $u_i$. For the sake of simplicity we shall expose here the proof for non negative real components, highlighting in Remark \ref{rem:vector_case} the modifications needed to cover the general case.  

(d) Finally we observe that the conclusions of Theorem \ref{teo:main_result} are all of local type. Hence, the conclusion are still valid in the case $\Omega$ unbounded by applying our main theorem to each bounded subset $\Omega'\subset\Omega$.

\end{rem}

The approach here differs from the viscosity one proposed by Caffarelli in \cite{C} (which we think does not apply to elements in $\Geh(\Omega)$) and follows rather the mainstream of \cite{CL,Lin}, based upon a classical dimension reduction principle by Federer.  It has the main advantage of avoiding the {\it a priori} assumption of non degeneracy of the free boundary (which is considered for instance in \cite[\S 4]{AC}): in contrast, non degeneracy will be turn out to hold true on the non singular part of the nodal set as a consequence of the weak reflection principle.  Compared with  \cite{CL},  a major difficulty here arises from the fact that we lack the essential  information of the minimality of the solution. The techniques we present here are not mere generalizations of the ones used in \cite{CL}: we will use a different approach when proving compactness of the blowup sequences as well as when classifying the conic functions (blowup limits); finally we will exploit an inductive argument on the dimension. This will allow us to extend the results of \cite{CL} concerning the asymptotic limits of solutions of systems arising in Bose-Einstein condensation ({\it cf.} \S \ref{subsec:application_BEC}) to the case of excited state solutions.


\subsection{Motivations and heuristic considerations}\label{subsect:heuristic}

 In $\R^2$ the functions  of the form $r^{m/2}cos(m\theta/2)$ (in polar coordinates) for any integer $m\geq 2$ are good prototypes of elements in $\Geh$. The nodal sets of such functions can be divided in two parts: the regular part is a union of curves where a reflection principle holds (the absolute value of the gradient is the same when we approach each curve from opposite sides); the remaining part has small Hausdorff measure (it is a single point). Our aim is to show that this is a general fact, in any space dimension.

 More generally, let $u$ be a locally Lipschitz $H^1$--solution of $-\Delta u=f(x,u)$ in $\Omega$ for $f\in C^1(\Omega\times \R\setminus\{0\})$ with $f(x,s)=O(s)$ as $s\rightarrow 0$, uniformly in $x$. For
 $$
 \tilde E(r)=\frac{1}{r^{N-2}}\int_{B_r(x_0)}|\nabla u|^2 \, dx
 $$
it holds
\begin{equation}\label{eq:derivative_tilde_E}
\tilde E'(r)=\frac{2-N}{r^{N-1}}\int_{B_r(x_0)}|\nabla u|^2+\frac{1}{r^{N-2}}\int_{\partial B_r(x_0)}|\nabla u|^2\, d \sigma.
\end{equation}
If we integrate the following Poh\u ozaev--type (Rellich) identity in $B_r(x_0)$
\begin{equation}\label{eq:Rellich}
\text{div}\left( (x-x_0) |\nabla u|^2 - 2\langle x-x_0,\nabla u\rangle \nabla u  \right) =(N-2) |\nabla u|^2-2\langle x-x_0,\nabla u\rangle \Delta u
\end{equation}
then we obtain
$$
r\int_{\partial B_r(x_0)}|\nabla u|^2\, d\sigma = 2r \int_{\partial B_r(x_0)}(\partial_\nu u)^2 \, d\sigma + (N-2)\int_{B_r(x_0)}|\nabla u|^2+\int_{B_r(x_0)}2f(x,u) \langle \nabla u, x-x_0\rangle.
$$
This, together with \eqref{eq:derivative_tilde_E}, readily implies \eqref{eq:tilde_E'_expression} for $U=(u)$. Hence, if we define $u_1=u^+$ and $u_2=u^-$ we deduce that $(u_1, u_2)\in \Geh(\Omega)$.

In order to better motivate property (G3) and to better understand the information that it contains about the interaction between the different components $u_i$, let us show what happens in the presence of exactly two components, each satisfying an equation on its support. Suppose $h=2$ and take $U=(u_1,u_2)\in \Geh(\Omega)$ such that $\Omega\cap \partial \{u_1>0\}=\Omega\cap \partial \{u_2>0\}=\Gamma_U$. Assume sufficient regularity in order to perform the following computations (see the proof of Lemma \ref{lemma:reflection_principle} and Subsection \ref{subsec:class_S} for related discussions).
For every point $x_0$ and radius $r>0$, take identity \eqref{eq:Rellich} with $u=u_i$ ($i=1,2$) and integrate it in $\{u_i>0\}\cap B_r(x_0)$. We obtain
\begin{multline*}
r \int_{\partial B_r(x_0)\cap \{u_i>0\}} |\nabla u_i|^2\,d\sigma = 2 r \int_{\partial B_r(x_0)\cap \{u_i>0\}}\left(\partial_\nu u_i\right)^2\,d\sigma+ (N-2)\int_{B_r(x_0)\cap \{u_i>0\}}|\nabla u_i|^2 +\\
 + 2 \int_{B_r(x_0)\cap \{u_i>0\}}f_i(x,u_i)\langle \nabla u_i,x-x_0\rangle + \int_{B_r(x_0)\cap \partial \{u_i>0\}}|\nabla u_i|^2\langle x-x_0,\nu\rangle \, d\sigma.
\end{multline*}
This implies, by summing the equalities for $i=1,2$ and dividing the result by $r^{N-1}$,
\begin{multline*}
\frac{1}{r^{N-2}}\int_{\partial B_r(x_0)}|\nabla U|^2\, d\sigma =\frac{2}{r^{N-2}} \int_{\partial B_r(x_0)}\left( \partial_\nu U\right)^2 + \frac{N-2}{r^{N-1}}\int_{B_r(x_0)}|\nabla U|^2+  \\
+ \frac{2}{r^{N-1}}\int_{B_r(x_0)}\sum_{i=1}^{2} f_i(x,u_i) \langle \nabla u_i,x-x_0\rangle + \frac{1}{r^{N-1}}\int_{B_r(x_0)\cap \partial\{u_1>0\}}|\nabla u_1|^2 \langle x-x_0,\nu \rangle\, d\sigma +\\
+ \frac{1}{r^{N-1}}\int_{B_r(x_0)\cap \partial\{u_2>0\}}|\nabla u_2|^2 \langle x-x_0,\nu \rangle\,d\sigma
\end{multline*}
and
\begin{multline}\label{eq:derivative_of_E_heuristics}
\tilde E'(r) = \frac{2}{r^{N-2}} \int_{\partial B_r(x_0)}\left( \partial_\nu U\right)^2 + \frac{2}{r^{N-1}}\int_{B_r(x_0)}\sum_{i=1}^2 f_i(x,u_i) \langle \nabla u_i,x-x_0\rangle + \\
+ \frac{1}{r^{N-1}}\int_{B_r(x_0)\cap \partial\{u_1>0\}}|\nabla u_1|^2 \langle x-x_0,\nu \rangle\, d\sigma + \frac{1}{r^{N-1}}\int_{B_r(x_0)\cap \partial\{u_2>0\}}|\nabla u_2|^2 \langle x-x_0,\nu \rangle\,d\sigma.
\end{multline}
for every point $x_0$ and radius $r>0$.
Hence in this case (G3) holds if and only if the sum of the last two integrals in \eqref{eq:derivative_of_E_heuristics} is zero for every $x_0,r$, that is, $|\nabla u_1|=|\nabla u_2|$ on $\Gamma_U$. Thus, in some sense, (G3) is a weak formulation of a reflection principle.

This paper is organized as follows. In the next section we prove that elements in $\Geh(\Omega)$ satisfy a modified version of the Almgren's Monotonicity Formula; by exploiting this fact, in Section \ref{sec:blow_up_sequences} we prove convergence of blowup sequences as well as some closure properties of the class $\Geh(\Omega)$. In Section \ref{sec:hausdorff_dimension} we use the Federer's Reduction Principle in order to prove some Hausdorff estimates for the nodal sets, define the set $\Sigma_U$ (recall Theorem \ref{teo:main_result}) and prove part of Theorem \ref{teo:main_result} in dimension $N=2$. In Section \ref{sec:regularity_of_regular_free_boundary} we prove that, under an appropriate assumption, $\Sigma_U$ is an hyper-surface satisfying the reflection principle \eqref{eq:reflection_principle} and in Section \ref{sec:iteration_argument} we prove by induction in the dimension $N$ that such assumption is satisfied for every $N\geq 2$. In Section \ref{sec:riemannian} we examine the case of systems of equations on Riemannian manifolds and of operators with variable coefficients, also discussing the regularity up to the boundary. Finally in Section \ref{sec:applications} we present some applications of our theory and solve two different problems by showing that its solutions belong to the class $\Geh(\Omega)$.

\section{Preliminaries}\label{sec:preliminaries}

The functions belonging to $\Geh(\Omega)$ have a very rich structure, mainly due to property (G3), which will enable us to prove the validity of the Almgren's Monotonicity Formula (Theorem \ref{teo:Almgren's_Monotonicity_Formula} below).  With this purpose, it is more convenient to use a slightly modified version of (G3), including in the definition of the energy also a potential term. The two versions are clearly equivalent, and we will use this second formulation from now on:

\begin{itemize}
     \item[(G3)]  Define for every $x_0\in \Omega$ and $r\in (0,\text{dist}(x_0,\partial \Omega))$ the quantity
          \begin{equation}\nonumber
          E(r)= E(x_0,U,r)=\frac{1}{r^{N-2}}\int_{B_r(x_0)}\left(|\nabla U|^2-  \langle F(x,U), U  \rangle\right)
          \end{equation}
then $E(x_0,U,\cdot)$ is an absolutely continuous function on $r$ and
   \begin{equation}\label{lemma_E'_expression}
    \frac{d}{dr}E(x_0,U,r)= \frac{2}{r^{N-2}}\int_{\partial B_r(x_0)}  (\partial_\nu U )^2\, d\sigma +R(x_0,U,r),
   \end{equation}
with
   \begin{multline}
  R(x_0,U,r)= \frac{2}{r^{N-1}}\int_{B_r(x_0)} \sum_i f_i(x,u_i)\langle \nabla u_i,x-x_0 \rangle+\\
  +\frac{1}{r^{N-1}}\int_{B_r(x_0)}  (N-2)  \langle F(x,U), U\rangle - \frac{1}{r^{N-2}}\int_{\partial B_r(x_0)}  \langle F(x,U), U \rangle \,d\sigma .
   \end{multline}
\end{itemize}

\begin{rem}The definition of $E(x_0,U,r)$ (and the one of $R(x_0,U,r)$) is to be used with some caution. In fact, this quantity also depends on the function $F$ that is associated (through system \eqref{eq:system_u_i}) to each $U\in \Geh(\Omega)$. Although this function is not uniquely determined for any given $U$, we prefer to omit its reference in the definition of $E$, with some abuse of notations.\end{rem}

Furthermore define for every $x_0\in \Omega$ and $r\in(0,\text{dist}(x_0,\partial \Omega))$ the average
$$H(r)=H(x_0,U,r)=\frac{1}{r^{N-1}}\int_{\partial B_r(x_0)}U^2\, d\sigma$$
and, whenever $H(r)\neq 0$, the generalized Almgren's quotient by
$$N(r)=N(x_0,U,r)=\frac{E(x_0,U,r)}{H(x_0,U,r)}.$$

\begin{teo}\label{teo:Almgren's_Monotonicity_Formula} Given $U\in \Geh(\Omega)$ and $\tilde \Omega\Subset\Omega$, there exist \footnote{With $\displaystyle   d=\max_i \mathop{\sup_{0<s\leq \|U\|_{L^\infty(\Omega)}}}_{x\in \Omega}  |f_i(x,s)/s| $}
 $\tilde C=\tilde C(d,N,\tilde \Omega)>0$ and $\tilde r=\tilde r(d,N,\tilde \Omega)>0$ such that for every $x_0\in \tilde \Omega$ and $r\in(0,\tilde r]$ we have $H(x_0,U,r)\neq 0$, $N(x_0,U,\cdot)$ is an absolutely continuous function and
\begin{equation}\label{eq:derivative_of_N}
\frac{d}{dr}N(x_0,U,r)\geq-\tilde C (N(x_0,U,r)+1).
\end{equation}
In particular $e^{\tilde Cr}(N(x_0,U,r)+1)$ is a non decreasing function for $r\in (0,\tilde r]$ and the limit $N(x_0,U,0^+):=\lim_{r\rightarrow 0^+} N(x_0,U,r)$ exists and is finite.
Moreover,
\begin{equation}\label{eq:derivative_of_log_H}
\displaystyle \frac{d}{dr}\log(H(x_0,U,r))=\frac{2}{r}N(x_0,U,r).
\end{equation}
\end{teo}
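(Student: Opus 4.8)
The strategy is the classical Almgren argument adapted to the perturbed setting, so the plan is to compute $H'(r)$ and $E'(r)$ separately, form the logarithmic derivative of $N(r)$, and estimate the error terms coming from the lower-order nonlinearity $F(x,U)$. First I would establish \eqref{eq:derivative_of_log_H}: differentiating $H(r)=r^{1-N}\int_{\partial B_r}U^2\,d\sigma$ and rescaling to the unit sphere gives $H'(r)=\frac{2}{r^{N-1}}\int_{\partial B_r}\langle U,\partial_\nu U\rangle\,d\sigma$; the key identity is then $\int_{\partial B_r}\langle U,\partial_\nu U\rangle\,d\sigma = \int_{B_r}(|\nabla U|^2-\langle F(x,U),U\rangle)=r^{N-2}E(r)$, which follows by testing the equation \eqref{eq:system_u_i} for each $u_i$ with $u_i$ itself on $B_r$ and using that the boundary measures $\mu_i$ are supported on $\Gamma_U$ where $U=0$ (so $\int u_i\,d\mu_i=0$, and the nonlinear terms reassemble into $\langle F(x,U),U\rangle$ since $u_i f_i(x,u_i)$ vanishes off the support of $u_i$). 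This gives $H'(r)=\frac{2}{r}E(r)$, hence \eqref{eq:derivative_of_log_H} once $H(r)\neq 0$ is known.

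Next I would show $H(r)>0$ for small $r$, uniformly on $\tilde\Omega$. Since $U\not\equiv0$ and is Lipschitz, if $H(x_0,U,r)=0$ for some $r$ then $U\equiv 0$ on $\partial B_r(x_0)$; one wants to rule this out for all small $r$ near any $x_0\in\tilde\Omega$ using a unique continuation-type argument. The cleanest route, which I expect the paper to take, is to first derive the differential inequality \eqref{eq:derivative_of_N} on any interval where $H\neq 0$, integrate it to control $N$ from above and (via the monotonicity of $e^{\tilde Cr}(N+1)$) conclude that $N$ stays bounded; then, using $\frac{d}{dr}\log H = \frac{2}{r}N \leq \frac{C}{r}$, deduce that $H$ cannot vanish at an interior radius (a Harnack/Cauchy–Lipschitz continuation argument prevents $\log H\to -\infty$ at positive $r$). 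So the logical order is: compute the derivatives formally, get \eqref{eq:derivative_of_N}, then bootstrap to positivity of $H$; the constants $\tilde C,\tilde r$ depend only on $d=\max_i\sup|f_i(x,s)/s|$, $N$, and $\tilde\Omega$.

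For \eqref{eq:derivative_of_N}, using $N=E/H$ we have
\begin{equation*}
\frac{N'(r)}{N(r)}=\frac{E'(r)}{E(r)}-\frac{H'(r)}{H(r)}=\frac{E'(r)}{E(r)}-\frac{2E(r)}{rH(r)}.
\end{equation*}
From (G3), $E'(r)=\frac{2}{r^{N-2}}\int_{\partial B_r}(\partial_\nu U)^2\,d\sigma + R(x_0,U,r)$, while $\frac{2E(r)}{r}=\frac{2}{r^{N-1}}\int_{\partial B_r}\langle U,\partial_\nu U\rangle\,d\sigma$ by the identity above. Writing both leading terms over $\partial B_r$ and invoking Cauchy–Schwarz on the sphere,
\begin{equation*}
\left(\int_{\partial B_r}\langle U,\partial_\nu U\rangle\,d\sigma\right)^2\leq \left(\int_{\partial B_r}U^2\,d\sigma\right)\left(\int_{\partial B_r}(\partial_\nu U)^2\,d\sigma\right),
\end{equation*}
the principal part of $N'(r)$ is nonnegative — this is the heart of Almgren's formula. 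It remains to absorb the remainder: one shows $|R(x_0,U,r)|\leq C d\big(E(r)+H(r)+r^{-(N-2)}\!\int_{B_r}U^2\big)$ by using $|f_i(x,s)|\leq d\,s$, $|\nabla u_i|$ bounded (Lipschitz), and the trace/Poincaré-type bound $r^{-(N-2)}\int_{B_r}|\nabla U|^2 \lesssim$ quantities controlled by $E$ and $H$; combining with the sign of the main term yields $N'(r)\geq -\tilde C(N(r)+1)$. The main obstacle is precisely the careful bookkeeping of the remainder $R$ — ensuring every term is dominated by $C d\,(N+1)\,H/r$ with constants independent of $x_0\in\tilde\Omega$, and making rigorous the formal differentiations (justified by the absolute continuity assumed in (G3) and the Lipschitz regularity of $U$, so that the co-area formula and integration by parts on $B_r$ apply for a.e. $r$). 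Once \eqref{eq:derivative_of_N} holds, Gronwall gives that $e^{\tilde Cr}(N(r)+1)$ is nondecreasing, hence bounded near $0$, so $N(0^+)$ exists and is finite.
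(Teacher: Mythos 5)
Your plan for the core of the theorem coincides with the paper's proof: the identity $E(x_0,U,r)=r^{2-N}\int_{\partial B_r(x_0)}\langle U,\partial_\nu U\rangle\,d\sigma$ obtained by testing \eqref{eq:system_u_i} with $u_i$ and using (G2), the Cauchy--Schwarz inequality on the sphere for the principal part of $N'$, the absorption of the remainder via $|R|\leq \tilde C(E+H)$ using $|f_i(x,s)|\leq d\,s$ together with \eqref{eq:sum_and_subtract} and the Poincar\'e bound on $r^{-N}\int_{B_r}U^2$, and finally Gronwall. One small correction: since $E$ may vanish or change sign, the paper works directly with $N'=(E'H-EH')/H^2$ rather than with $N'/N=E'/E-H'/H$ as you write; this is cosmetic but your formula is not literally usable.

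The one place you genuinely depart from the paper is the positivity of $H$, and there your sketch has a gap. The paper chooses $\tilde r$ so small that $d\leq\lambda_1(B_{\tilde r})$; if $H(x_0,U,r)=0$ for some $r<\tilde r$, then each $u_i$ lies in $H^1_0(B_r(x_0))$ and testing $-\Delta u_i\leq d\,u_i\leq\lambda_1(B_{\tilde r})u_i$ against $u_i$ forces $U\equiv 0$ in $B_r(x_0)$, contradicting the fact that $\Gamma_U$ has empty interior. This eigenvalue comparison is precisely where the dependence of $\tilde r$ on $d$ enters. Your ODE-continuation argument is insufficient as stated, for two reasons. First, the one-sided bound $\frac{d}{dr}\log H\leq C/r$ only propagates positivity of $H$ \emph{downward}: it gives $H(r)\geq H(r_1)(r/r_1)^{C}$ for $r<r_1$, so it excludes a zero of $H$ at the bottom of an interval of positivity but not at its top; for the latter you would need the matching lower bound $N\geq -C$ near the endpoint (which also follows from the monotonicity of $e^{\tilde Cr}(N+1)$, but which you do not invoke). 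Second, and more importantly, any continuation argument presupposes some radius at which $H>0$; it cannot by itself exclude that $U$ vanishes identically on the whole ball $B_{\tilde r}(x_0)$, i.e.\ that $x_0$ lies in the interior of $\Gamma_U$. Ruling this out requires a separate step (the openness-and-closedness argument for the set where $U$ vanishes identically, or the eigenvalue comparison above), which your plan omits.
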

\begin{proof} The proof follows very closely the one of Proposition 4.3 in \cite{uniform_holder}. For this reason we only present a sketch of it, stressing however the dependence of $\tilde C, \tilde r$ on $d$. Fix $U\in\Geh(\Omega)$ and take $\tilde \Omega\Subset \Omega$.  Since $U\not\equiv 0$ in $\Omega$, we can suppose without loss of generality that $U\not\equiv 0$ in $\tilde \Omega$ .

Observe that since $\Omega$ is bounded and $U$ is Lipschitz continuous in $\Omega$, $\|U\|_{L^\infty(\Omega)}<+\infty$. Hence property (G1) provides the upper bound $|f_i(x,u_i)|\leq d u_i$ for all $x\in \Omega$ and $i=1,\ldots,h$, and therefore there exists $C=C(d,N,\tilde \Omega)$ such that for every $x_0\in \tilde \Omega$ and $0<r<\text{dist}(\tilde \Omega,\partial \Omega)$,
\begin{eqnarray*}
|R(x_0,U,r)| &\leq& \frac{2d}{r^{N-1}}\int_{B_r(x_0)} \sum_i u_i |\nabla u_i| r + \frac{(N-2) d}{r^{N-1}}\int_{B_r(x_0)}U^2 + \frac{d}{r^{N-2}}\int_{\partial B_r(x_0)}U^2\, d\sigma\\
&\leq & C\left( \frac{1}{r^{N-2}}\int_{B_r(x_0)}|\nabla U|^2  + \frac{1}{r^N}\int_{B_r(x_0)}U^2 + \frac{1}{r^{N-1}}\int_{\partial B_r(x_0)}U^2\, d\sigma \right).
\end{eqnarray*}
Moreover, we have
\begin{equation}\label{eq:sum_and_subtract}
\frac{1}{r^{N-2}}\int_{B_r(x_0)}|\nabla U|^2 \leq  E(x_0,U,r) +\frac{1}{r^{N-2}}\int_{B_r(x_0)}\langle F(U),U\rangle\leq 
E(x_0,U,r) +\frac{d r^2}{r^N}\int_{B_r(x_0)}U^2
\end{equation}
and, by using Poincar\'e's inequality,
\begin{eqnarray*}
\frac{1}{r^N}\int_{B_r(x_0)}U^2 &\leq & \frac{1}{N-1}\left( \frac{1}{r^{N-2}}\int_{B_r(x_0)}|\nabla U|^2+\frac{1}{r^{N-1}}\int_{\partial B_r(x_0)} U^2 \, d\sigma  \right)\\
&\leq & \frac{1}{N-1}\left( E(x_0,U,r)+H(x_0,U,r)  \right) + \frac{r^2 C'}{r^N}\int_{B_r(x_0)} U^2.
\end{eqnarray*}
Thus we obtain the existence of  $\bar r<\text{dist}(\tilde \Omega,\partial \Omega)$ such that
\begin{equation}\label{eq:maj_of_|nabla u|^2_leq_E+H}
\frac{1}{r^N}\int_{B_r(x_0)} U^2 \leq 2 \left( E(x_0,U,r)+H(x_0,U,r)\right) \qquad \text{ for every }x_0\in \tilde \Omega,\ 0<r<\bar r,
\end{equation}
which, together with \eqref{eq:sum_and_subtract}, yields $|R(x_0,U,r)|\leq \tilde C \left(E(x_0,U,r)+H(x_0,U,r)\right)$ for some $\tilde C=\tilde C(d,N,\tilde \Omega)>0$ and for every $x_0\in\tilde \Omega,\ 0<r<\bar r$.
The function $r\mapsto H(x_0,U,r)$ is absolutely continuous and for almost every $r>0$
$$
\frac{d}{dr}H(x_0,U,r)=\frac{2}{r^{N-1}}\int_{\partial B_r(x_0)}\langle U, \partial_\nu U\rangle \,d\sigma
$$
(to check it, use a sequence of smooth functions approximating $U$).
Moreover if we multiply system \eqref{eq:system_u_i} by $U$, integrate by parts in $B_r(x_0)$ and take into account property (G2) we can rewrite $E$ as
$$
E(x_0,U,r)=\frac{1}{r^{N-2}}\int_{\partial B_r(x_0)}\langle U , \partial_\nu U\rangle \,d\sigma.
$$
Thus, by performing a direct computation, identity  \eqref{eq:derivative_of_log_H} holds whenever $H(x_0,U,r)>0$ for $r<\bar r$, as well as
$$\frac{d}{dr}N(x_0,U,r)\geq \frac{R(x_0,U,r)}{H(x_0,U,r)}\geq -\tilde C \frac{E(x_0,U,r)+H(x_0,U,r)}{H(x_0,U,r)},$$which provides \eqref{eq:derivative_of_N}.

The only thing left to prove is that $H(x_0,U,r)>0$ for every $x\in \tilde \Omega$ and small $r>0$. Now, since $H(x_0,U,\cdot)$ solves the equation $H'(r)=a(r)H(r)$ with $a(r)=N(r)/r$, one can prove that $\Gamma_U$ has an empty interior. Take $\tilde r< \bar r$ such that
\begin{equation}\label{eq:Almgren_acessory_result}
-\Delta u_i\leq f_i(x,u_i)\leq d u_i\leq \lambda_1(B_{\tilde r})u_i
\end{equation}
for all $i$ (where $\lambda_1$ denotes the first eigenvalue of $-\Delta$ in $H^1_0$).  If there were $x_0\in \tilde \Omega$ and $0<r<\tilde r$ such that $H(x_0,U,r)=0$, then by multiplying inequality \eqref{eq:Almgren_acessory_result} by $u_i$ and integrating by parts in $B_r(x_0)$ we would obtain $U\equiv 0$ in $B_r(x_0)$, a contradiction.
Hence $H(x_0,U,r)>0$ whenever $x_0\in \tilde \Omega$, $0<r<\tilde r$.
 \end{proof}

\begin{rem}
At this point we would like to stress that the hypotheses in $\Geh(\Omega)$ can be weakened. In \cite[Proposition 4.1]{uniform_holder}, by making use of the previous Almgren's Monotonicity Formula, it is shown that if in $\Geh(\Omega)$ we replace the Lipschitz continuity assumption with $\alpha$--H\"older continuity for every $\alpha\in(0,1)$, then actually each element $U\in \Geh(\Omega)$ is Lipschitz continuous. For other general considerations, see also Remark \ref{rem:global_Lipschitz} .
\end{rem}

\begin{rem}\label{rem:N_is_increasing}
If $U\in\Geh(\Omega)$ has as associated function $F\equiv 0$, then $R(x_0,U,r)\equiv 0$ and by repeating the previous procedure we conclude that in this case $N(x_0,U,r)$ is actually a non decreasing function.
\end{rem}

\begin{rem}\label{rem:Gamma_empty_interior}
As observed in the above proof, $\Gamma_U$ has an empty interior whenever $U\in \Geh(\Omega)$.
\end{rem}

Another simple consequence of the monotonicity result is the following comparison property (which, with $r_2=2 r_1$, is the so called doubling property).
\begin{coro}\label{coro:doubling}
Given $U\in \Geh(\Omega)$ and $\tilde \Omega\Subset \Omega$, there exist $\tilde C>0$ and $\tilde r>0$ such that
$$H(x_0,U,r_2)\leq H(x_0,U,r_1)\left(\frac{r_2}{r_1}\right)^{2\tilde C}$$ for every $x_0\in \tilde \Omega$, $0<r_1<r_2\leq \tilde r$.
\end{coro}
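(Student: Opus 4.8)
The plan is to derive the estimate directly from Theorem \ref{teo:Almgren's_Monotonicity_Formula}, combining the logarithmic derivative identity \eqref{eq:derivative_of_log_H} with a uniform upper bound on the generalized Almgren quotient $N(x_0,U,\cdot)$. Indeed, integrating \eqref{eq:derivative_of_log_H} between $r_1$ and $r_2$ gives
\[
\log\frac{H(x_0,U,r_2)}{H(x_0,U,r_1)}=\int_{r_1}^{r_2}\frac{2}{r}\,N(x_0,U,r)\,dr ,
\]
so if we can show that $N(x_0,U,r)\le \tilde C$ for all $x_0\in\tilde\Omega$ and all sufficiently small $r$, then the integrand is bounded by $2\tilde C/r$ and integration yields $\log\!\big(H(x_0,U,r_2)/H(x_0,U,r_1)\big)\le 2\tilde C\log(r_2/r_1)$, which is the claim after exponentiating.

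To produce the required bound on $N$, I would fix an intermediate open set with $\tilde\Omega\Subset\tilde\Omega'\Subset\Omega$ and apply Theorem \ref{teo:Almgren's_Monotonicity_Formula} on $\tilde\Omega'$: this gives constants $C_0>0$ and $r_0>0$ such that $H(x_0,U,r)>0$ and $r\mapsto e^{C_0 r}\big(N(x_0,U,r)+1\big)$ is non-decreasing on $(0,r_0]$, for every $x_0\in\tilde\Omega'$. Since $U$ is (Lipschitz, hence) continuous, for the fixed radius $r_0$ the maps $x_0\mapsto E(x_0,U,r_0)$ and $x_0\mapsto H(x_0,U,r_0)$ are continuous on $\tilde\Omega'$ — $H$ as an integral of the continuous function $U^2$ over a translated sphere, $E$ by continuity of the $L^1$-integral under translations of the ball — and $H(\cdot,U,r_0)$ is strictly positive there. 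Hence $x_0\mapsto N(x_0,U,r_0)$ is continuous on $\tilde\Omega'$, and in particular bounded, say by $M$, on the compact set $\overline{\tilde\Omega}\subset\tilde\Omega'$. Now the monotonicity of $e^{C_0 r}(N+1)$ yields, for every $x_0\in\tilde\Omega$ and $0<r\le r_0$,
\[
N(x_0,U,r)+1\;\le\; e^{C_0(r_0-r)}\big(N(x_0,U,r_0)+1\big)\;\le\; e^{C_0 r_0}\,(M+1),
\]
so $N(x_0,U,r)\le e^{C_0 r_0}(M+1)-1$; replacing this number by a larger positive constant $\tilde C$ (which does not weaken the final inequality, since $\log(r_2/r_1)\ge0$) we may assume $\tilde C>0$.

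With $\tilde r:=r_0$ and this $\tilde C$, the two displays above give, for all $x_0\in\tilde\Omega$ and $0<r_1<r_2\le\tilde r$,
\[
\log\frac{H(x_0,U,r_2)}{H(x_0,U,r_1)}=\int_{r_1}^{r_2}\frac{2\,N(x_0,U,r)}{r}\,dr\;\le\;2\tilde C\log\frac{r_2}{r_1},
\]
which is precisely the asserted inequality. The only point that requires a bit of care — and thus the main, though minor, obstacle — is the uniformity in the base point $x_0$ of the bound on $N$: this is why one passes to the slightly larger domain $\tilde\Omega'$, so that $\overline{\tilde\Omega}$ sits as a compact set inside the region where both the Almgren formula and the positivity of $H$ hold, and where continuity turns the pointwise finiteness of $N(x_0,U,r_0)$ into a uniform bound. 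Everything else is an immediate consequence of Theorem \ref{teo:Almgren's_Monotonicity_Formula}.
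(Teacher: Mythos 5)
Your proof is correct and follows essentially the same route as the paper: the authors likewise integrate identity \eqref{eq:derivative_of_log_H} after bounding $N(x_0,U,r)$ uniformly via the monotonicity of $r\mapsto e^{\tilde C r}(N(x_0,U,r)+1)$ and the finiteness of $\sup_{x_0\in\tilde\Omega}|N(x_0,U,\tilde r)|$. Your extra care with the intermediate domain $\tilde\Omega'$ to justify that supremum by continuity and compactness is a harmless (indeed slightly more explicit) refinement of the same argument.
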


\begin{proof}
For each $U$ and $\tilde\Omega$ fixed, let $\tilde C$ and $\tilde r$ be the associated constants according to the previous theorem. Let also $\displaystyle C:=\sup_{x_0\in \tilde \Omega}|N(x_0,U,\tilde r)|<\infty$. Then
\begin{eqnarray*}
\frac{d}{dr}\log\left(H(x_0,U,r)\right) &=& \frac{2}{r}N(x_0,U,r) = \frac{2}{r}\left((N(x_0,U,r)+1)e^{\tilde C r}e^{-\tilde C r}-1   \right)\\
&\leq& \frac{2}{r}\left((N(x_0,U,\tilde r)+1)e^{\tilde C \tilde r}e^{-\tilde C r}-1   \right)\\
&\leq& \frac{2}{r}\left((C+1)e^{\tilde C \tilde r}-1\right)=: \frac{2\bar C}{r}
\end{eqnarray*}
for every $0<r<\tilde r$. Now we integrate between $r_1$ and $r_2$, $0<r_1<r_2\leq \tilde r$,
obtaining
$$\frac{H(x_0,U,r_2)}{H(x_0,U,r_1)}\leq \left(\frac{r_2}{r_1}\right)^{2\bar C},$$ as desired.
\end{proof}

\begin{coro}\label{coro:N_geq_1}
For any $U\in \Geh(\Omega)$ and $x_0\in \Gamma_U$, we have $N(x_0,U,0^+)\geq 1$.
\end{coro}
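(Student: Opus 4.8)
The plan is to show that $N(x_0,U,0^+)<1$ is incompatible with $x_0$ lying on the nodal set $\Gamma_U$, by deriving a contradiction from the growth control that a small Almgren quotient imposes on $U$ near $x_0$. First I would use the two identities from Theorem \ref{teo:Almgren's_Monotonicity_Formula}: the monotonicity of $e^{\tilde Cr}(N(x_0,U,r)+1)$ and the logarithmic derivative formula $\frac{d}{dr}\log H(x_0,U,r)=\frac{2}{r}N(x_0,U,r)$. Suppose for contradiction that $N(x_0,U,0^+)=\gamma<1$. By monotonicity of $e^{\tilde C r}(N+1)$ and continuity, for every $\gamma'\in(\gamma,1)$ there is $\rho>0$ with $N(x_0,U,r)\leq \gamma'$ for all $r\in(0,\rho]$. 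Plugging this into the logarithmic derivative identity and integrating between $r$ and $\rho$ gives a lower bound of the form $H(x_0,U,r)\geq c\, r^{2\gamma'}$ for $r\in(0,\rho]$, with $c=H(x_0,U,\rho)\rho^{-2\gamma'}>0$.

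Next I would combine this lower bound on the spherical average of $U^2$ with the fact that $U$ is Lipschitz continuous and vanishes at $x_0\in\Gamma_U$. Lipschitz continuity gives $|U(x)|=|U(x)-U(x_0)|\leq L\,|x-x_0|$ for $x$ near $x_0$, hence $U^2(x)\leq L^2 r^2$ on $\partial B_r(x_0)$, and therefore
\[
H(x_0,U,r)=\frac{1}{r^{N-1}}\int_{\partial B_r(x_0)}U^2\,d\sigma\leq L^2\,\omega_{N-1}\,r^2,
\]
where $\omega_{N-1}$ is the surface measure of the unit sphere. Comparing this with the lower bound $H(x_0,U,r)\geq c\,r^{2\gamma'}$ yields $c\,r^{2\gamma'}\leq L^2\omega_{N-1}r^2$, i.e.\ $c\leq L^2\omega_{N-1}r^{2-2\gamma'}$ for all small $r>0$. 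Since $\gamma'<1$ we have $2-2\gamma'>0$, so letting $r\to 0^+$ forces $c\leq 0$, contradicting $c>0$. Hence $N(x_0,U,0^+)\geq 1$.

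The only delicate point is ensuring $H(x_0,U,\rho)>0$ so that the constant $c$ is genuinely positive; this is guaranteed by Theorem \ref{teo:Almgren's_Monotonicity_Formula}, which asserts $H(x_0,U,r)\neq 0$ for all $x_0\in\tilde\Omega$ and $r\in(0,\tilde r]$ (take $\rho\leq\tilde r$). One should also note that the whole argument is local and the constants $\tilde C,\tilde r$ depend only on a fixed $\tilde\Omega\Subset\Omega$ containing $x_0$, which is harmless. I do not expect a serious obstacle here: the statement is essentially the classical fact that the Almgren frequency at a zero of a function with this growth structure is at least the homogeneity degree of the lowest admissible blowup, and the Lipschitz upper bound pins that degree from below by $1$.
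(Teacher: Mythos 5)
Your argument is correct and is essentially identical to the paper's own proof: both derive the lower bound $H(x_0,U,r)\geq c\,r^{2\gamma'}$ with $\gamma'<1$ from the logarithmic derivative identity, and contrast it with the upper bound $H(x_0,U,r)\leq C' r^2$ coming from Lipschitz continuity and $U(x_0)=0$. The only cosmetic remark is that the upper bound $N(x_0,U,r)\leq\gamma'$ for small $r$ follows directly from the existence of the limit $N(x_0,U,0^+)=\gamma<1$ (as the paper states), rather than from the monotonicity of $e^{\tilde C r}(N+1)$, which only controls $N$ from below.
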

\begin{proof}
Suppose not. Since the limit $N(x_0,U,0^+)$ exists, we obtain the existence of $\bar r$ and $\varepsilon$ such that $N(x_0,U,r)\leq 1-\varepsilon$ for all $0\leq r\leq \bar r$. By Theorem \ref{teo:Almgren's_Monotonicity_Formula} we have that in this interval (by possibly replacing $\bar r$ with a smaller radius)
$$\frac{d}{dr}\log(H(x_0,U,r))\leq \frac{2}{r}(1-\varepsilon).$$
Integrating this inequality between $r$ and $\bar r$ ($r<\bar r$) yields
$$\frac{H(x_0,U,\bar r)}{H(x_0,U,r)}\leq \left(\frac{\bar r}{r}\right)^{2(1-\varepsilon)}$$
which, together with the fact that $U$ is a Lipschitz continuous function at $x_0$ and that $U(x_0)=0$ implies
$$C r^{2(1-\varepsilon)}\leq H(x_0,U,r)\leq C' r^2,$$a contradiction for small $r$.
\end{proof}

\begin{coro}\label{coro:N_upper_semi_continuous}
The map $\Omega\rightarrow [1,+\infty)$, $x_0\mapsto N(x_0,U,0^+)$ is upper semi-continuous.
\end{coro}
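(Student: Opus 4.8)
The plan is to exhibit $x_0\mapsto N(x_0,U,0^+)$ as an infimum of a family of functions, each continuous in $x_0$; since an infimum of continuous (even just upper semi-continuous) functions is always upper semi-continuous, this proves the claim. As upper semi-continuity is a local property, I fix an arbitrary $\tilde\Omega\Subset\Omega$ and let $\tilde C,\tilde r>0$ be the corresponding constants from Theorem \ref{teo:Almgren's_Monotonicity_Formula}; it suffices to establish the statement on $\tilde\Omega$.

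The first step is to check that for each fixed $r\in(0,\tilde r]$ the map $x_0\mapsto N(x_0,U,r)$ is continuous on $\tilde\Omega$. Writing
\[
H(x_0,U,r)=\int_{\partial B_1(0)}U(x_0+r\omega)^2\,d\sigma(\omega),
\]
the continuity and local boundedness of $U$ together with dominated convergence give that $x_0\mapsto H(x_0,U,r)$ is continuous; likewise, the integrand $|\nabla U|^2-\langle F(x,U),U\rangle$ is bounded on compact subsets of $\Omega$ (here one uses $\nabla U\in L^\infty_{\mathrm{loc}}$, from Lipschitz continuity, and $|f_i(x,s)|\le d\,s$ from (G1)), so $x_0\mapsto E(x_0,U,r)=\frac{1}{r^{N-2}}\int_{B_r(x_0)}\bigl(|\nabla U|^2-\langle F(x,U),U\rangle\bigr)$ is continuous because $|B_r(x_0)\triangle B_r(x_0')|\to 0$ as $x_0'\to x_0$. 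By Theorem \ref{teo:Almgren's_Monotonicity_Formula} we moreover have $H(x_0,U,r)>0$ throughout $\tilde\Omega\times(0,\tilde r]$, so $N(x_0,U,r)=E(x_0,U,r)/H(x_0,U,r)$ is continuous in $x_0$.

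For the second step, the monotonicity part of Theorem \ref{teo:Almgren's_Monotonicity_Formula} says that $r\mapsto e^{\tilde Cr}\bigl(N(x_0,U,r)+1\bigr)$ is non-decreasing on $(0,\tilde r]$, so its limit as $r\to 0^+$ equals its infimum; since $e^{\tilde Cr}\to 1$, this yields
\[
N(x_0,U,0^+)+1=\inf_{0<r\le\tilde r}e^{\tilde Cr}\bigl(N(x_0,U,r)+1\bigr).
\]
By the first step each function $x_0\mapsto e^{\tilde Cr}\bigl(N(x_0,U,r)+1\bigr)$ is continuous on $\tilde\Omega$, hence the infimum $x_0\mapsto N(x_0,U,0^+)+1$ is upper semi-continuous on $\tilde\Omega$; as $\tilde\Omega\Subset\Omega$ was arbitrary, so is $x_0\mapsto N(x_0,U,0^+)$ on $\Omega$. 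There is no serious obstacle here: the whole argument rests on the monotonicity formula already established, and the only mildly technical point is the continuity in $x_0$ of the integrals defining $E$ and $H$, handled by the Lipschitz bound on $U$ and the linear growth of the $f_i$. No blow-up or compactness argument is required.
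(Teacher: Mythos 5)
Your proof is correct and rests on exactly the same two ingredients as the paper's: the monotonicity of $r\mapsto e^{\tilde Cr}(N(x_0,U,r)+1)$ and the continuity of $x_0\mapsto N(x_0,U,r)$ for fixed $r$; the paper phrases it sequentially (bounding $N(x_n,U,r)$ from below by $(N(x_n,U,0^+)+1)e^{-\tilde Cr}-1$ and passing to the limsup), whereas you package the same facts as ``an infimum of continuous functions is upper semi-continuous.'' Your write-up is in fact slightly more careful, since it makes explicit the continuity of $E$ and $H$ in $x_0$, which the paper uses tacitly.
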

\begin{proof}
Take a sequence $x_n\rightarrow x$ in $\Omega$. By Theorem \ref{teo:Almgren's_Monotonicity_Formula} there exists a constant $C>0$ such that for small $r>0$
$$
N(x_n,U,r)=(N(x_n,U,r)+1)e^{C r}e^{-C r}-1\geq (N(x_n,U,0^+)+1)e^{-C r}-1.
$$
By taking the limit superior in $n$ and afterwards the limit as $r\rightarrow 0^+$ we obtain $N(x,U,0^+)\geq \limsup_n N(x_n,U,0^+)$.
\end{proof}


\section{Compactness of blowup sequences}\label{sec:blow_up_sequences}

All techniques presented in this paper involve a local analysis of the solutions, which will be performed via a blowup procedure. Therefore in this section we start with the study of the behavior of the class $G(\Omega)$ under rescaling, which will be followed by a convergence result for blowup sequences. This will be a key tool in the subsequent arguments.

Fix $U\in \Geh(\Omega)$ and let $f_i, \mu_i$ be associated functions and measures (respectively) in the sense of Definition \ref{defin:class_G} ({\it i.e.}, such that \eqref{eq:system_u_i} holds). For every fixed $\rho, t>0$ and $x_0\in \Omega$ define the rescaled function
	$$V(x)=\frac{1}{\rho}U_{x_0,t}(x)=\frac{U(x_0+tx)}{\rho}, \qquad \qquad {\rm for }\ x\in \Lambda:=\frac{\Omega-x_0}{t}.$$
It is straightforward to check that $V$ solves the system
\begin{equation}\label{eq:system_rescaled}
-\Delta v_i= g_i(x, v_i)-\lambda_i, \qquad \qquad {\rm in } \ \Deh'(\Lambda),\ \ i=1,\ldots,h,
\end{equation}
where
$$g_i(x,s)= \frac{t^2}{\rho}f_i(x_0+tx,\rho s) \qquad \text{and} \qquad \lambda_i(E):=\frac{1}{\rho t^{N-2}}\mu_i(x_0+t E)\ \
\text{for every Borel set } E \text{ of } \Lambda.$$
Indeed, for any given $\varphi\in \Deh(\Lambda)$,
\begin{eqnarray*}
&\displaystyle \int_{\Lambda}\left(\nabla v_i \cdot \nabla \varphi - g_i(x,v_i)\varphi \right)+\int_{\Lambda}\varphi \,d \lambda_i  = \qquad \qquad \qquad \qquad \qquad \qquad \qquad \qquad \qquad & \\
&\displaystyle =\int_{\Lambda}\left(\frac{t}{\rho}\nabla u_i(x_0+tx)\cdot \nabla \varphi-\frac{t^2}{\rho}f_i(x_0+t x, u_i(x_0+tx))\varphi\right)\, dx+\frac{1}{\rho t^{N-2}}\int_{\Lambda} \varphi(x)\,d\mu_i(x_0+t\cdot) &\\
&\displaystyle =\frac{1}{\rho t^{N-2}}\int_\Omega \left( \nabla u_i\cdot \nabla \left(\varphi((x-x_0)/t\right))-f_i(x, u_i)\varphi((x-x_0)/t)\right)\, dx+\frac{1}{\rho t^{N-2}}\int_\Omega \varphi((x-x_0)/t)\, d \mu_i(x)=0.&
\end{eqnarray*}

In this setting, for any $y_0\in \Lambda$ and $r\in(0,d(y_0,\partial \Lambda))$,
$$E(y_0,V,r)=\frac{1}{r^{N-2}}\int_{B_r(y_0)}\left(|\nabla V|^2-\langle G(x,V), V\rangle \right)$$and the following identities hold:
\begin{equation}\label{eq:almgren_identities1}
E(y_0,V,r)=\frac{1}{\rho^2}E(x_0+t y_0,U,tr),\qquad H(y_0,V,r)=\frac{1}{\rho^2}H(x_0+t y_0,U,tr),\\
\end{equation}
and hence
\begin{equation}\label{eq:almgren_identities2}
N(y_0,V,r)=N(x_0+t y_0,U,tr).
\end{equation}
Moreover,

\begin{prop}\label{prop:invariance_under_translation}
With the previous notations, $\displaystyle V\in \Geh(\Lambda).$
\end{prop}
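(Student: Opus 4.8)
The plan is to verify directly that the rescaled function $V$ satisfies each of the defining conditions (G1), (G2), (G3) of the class $\Geh(\Lambda)$. The computation preceding the statement already establishes that $V$ solves the system \eqref{eq:system_rescaled} in $\Deh'(\Lambda)$, so the structural equation is in place; what remains is to check the three conditions for the rescaled data $g_i$, $\lambda_i$ together with the energy identity.

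First I would check the easy algebraic facts. The components $v_i = u_i(x_0+t\cdot)/\rho$ are plainly nonnegative, Lipschitz in the interior of $\Lambda$, with mutually disjoint supports (since $\supp v_i = (\supp u_i - x_0)/t$, and composition with an affine map and division by a positive constant preserves disjointness), and $V\not\equiv 0$ because $U\not\equiv 0$. For (G1): since $f_i(x,s)=O(s)$ uniformly in $x$, say $|f_i(x,s)|\leq C s$ for $0\le s\le \|U\|_{L^\infty}$, we get $|g_i(x,s)| = (t^2/\rho)|f_i(x_0+tx,\rho s)| \leq (t^2/\rho)\, C\,\rho s = C t^2 s$, so $g_i$ is $C^1$ (being a composition of $f_i$ with affine maps and a dilation) and $g_i(x,s)=O(s)$ uniformly in $x$; note the bound only needs to hold for $s$ up to $\|V\|_{L^\infty(\Lambda)} = \|U\|_{L^\infty}/\rho$, which is fine since $\rho\|V\|_{L^\infty}\le\|U\|_{L^\infty}$. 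For (G2): $\lambda_i$ is, up to the positive factor $1/(\rho t^{N-2})$, the pushforward of $\mu_i$ under the affine map $y\mapsto x_0+ty$, hence a nonnegative Radon measure on $\Lambda$; and since $\mu_i$ is supported on $\Gamma_U$, the measure $\lambda_i$ is supported on $(\Gamma_U - x_0)/t$, which is exactly $\Gamma_V=\{y\in\Lambda:\ V(y)=0\}$ because $V(y)=0 \iff U(x_0+ty)=0$.

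The only substantive point is (G3): one must show that $E(y_0,V,\cdot)$ is absolutely continuous and satisfies the differential identity \eqref{lemma_E'_expression} with the appropriate remainder term $R(y_0,V,r)$ built from $g_i$. The clean way is to invoke the scaling relations \eqref{eq:almgren_identities1}: $E(y_0,V,r)=\rho^{-2}E(x_0+ty_0,U,tr)$. Since $U\in\Geh(\Omega)$, the function $s\mapsto E(x_0+ty_0,U,s)$ is absolutely continuous on $(0,\dist(x_0+ty_0,\partial\Omega))$, hence so is $r\mapsto E(y_0,V,r)=\rho^{-2}E(x_0+ty_0,U,tr)$ on $(0,\dist(y_0,\partial\Lambda))$, and by the chain rule for a.e.\ $r$,
\begin{equation*}
\frac{d}{dr}E(y_0,V,r)=\frac{t}{\rho^2}\,\frac{d}{ds}E(x_0+ty_0,U,s)\Big|_{s=tr}.
\end{equation*}
Now substitute the identity \eqref{lemma_E'_expression} valid for $U$ at the point $x_0+ty_0$ and radius $tr$, and then translate every term back to $V$ by the change of variables $x=x_0+ty$ (using $dx=t^N dy$, $d\sigma_x = t^{N-1} d\sigma_y$, $\partial_\nu$ scaling as $t^{-1}$, $\nabla u_i = (\rho/t)\nabla v_i$, $x-x_0 = t y$, and $f_i(x,u_i) = (\rho/t^2) g_i(y,v_i)$). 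The boundary term $\frac{2}{s^{N-2}}\int_{\partial B_s}(\partial_\nu U)^2$ becomes exactly $\frac{2}{r^{N-2}}\int_{\partial B_r(y_0)}(\partial_\nu V)^2$ after multiplication by $t/\rho^2$, and a term-by-term check of the three pieces of $R$ shows that $\frac{t}{\rho^2}R(x_0+ty_0,U,tr) = R(y_0,V,r)$ with the remainder now expressed through $g_i$ and $\langle G(x,V),V\rangle$. This book-keeping is the main (though entirely mechanical) obstacle: one has to be careful that every power of $t$ and $\rho$ cancels correctly in each of the three summands of $R$, in particular the mixed term $\int f_i\langle\nabla u_i,x-x_0\rangle$ which carries factors $(\rho/t^2)\cdot(\rho/t)\cdot t = \rho^2/t^2$ against the prefactor $1/s^{N-1}=1/(tr)^{N-1}$, and the combination with $t/\rho^2$ reproduces $\frac{2}{r^{N-1}}\int_{B_r(y_0)}\sum_i g_i(y,v_i)\langle\nabla v_i,y-y_0\rangle$. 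Having matched all terms, we conclude $V$ satisfies (G3), hence $V\in\Geh(\Lambda)$.
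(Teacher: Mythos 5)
Your proof is correct and follows essentially the same route as the paper's: the paper also notes that only (G3) requires verification, and establishes it exactly as you do, by combining the scaling identity \eqref{eq:almgren_identities1} with the chain rule and a change of variables $x=x_0+ty$ to show term by term that $\frac{t}{\rho^2}R(x_0+ty_0,U,tr)=R(y_0,V,r)$. (Only a cosmetic remark: $\lambda_i$ is, up to the constant, the pushforward of $\mu_i$ under $x\mapsto(x-x_0)/t$ rather than under $y\mapsto x_0+ty$, but this does not affect your argument.)
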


\begin{proof}
At this point the only thing left to prove is property (G3). In order to check its validity, just observe that by using \eqref{eq:almgren_identities1}  and by performing a change of variables of the form $x=x_0+t y$, we obtain
\begin{equation*}
\begin{split}
\displaystyle \frac{d}{dr}E(y_0,V,r)&= \displaystyle \frac{d}{dr}\frac{1}{\rho^2}E(x_0+t y_0,U,tr)=\frac{t}{\rho^2}\frac{dE}{dr}(x_0+t y_0,U,tr)\\
&=\displaystyle \frac{2 t}{\rho^2 (tr)^{N-2}}\int_{\partial B_{tr}(x_0+ty_0)}(\partial_\nu U)^2\,d\sigma + \frac{t}{\rho^2}R(x_0+t y_0,U,tr)\\
&= \displaystyle \frac{2}{r^{N-2}}\int_{\partial B_r(y_0)}(\partial_\nu V)^2+\frac{t}{\rho^2}R(x_0+t y_0,U,tr),
\end{split}
\end{equation*}
and
\begin{equation*}
\begin{split}
\displaystyle \frac{t}{\rho^2}R(x_0+t y_0,U,tr)=\ & \displaystyle \frac{2t}{\rho^2 (tr)^{N-1}}\int_{B_{tr}(x_0+t y_0)} \sum_i f_i(x,u_i) \langle \nabla u_i, x-(x_0+t y_0)\rangle + \\
&+ \displaystyle \frac{t}{\rho^2 (tr)^{N-1}}\int_{B_{tr}(x_0+t y_0)} (N-2) \langle F(U), U\rangle -\frac{t}{\rho^2(tr)^{N-2}}\int_{\partial B_{tr}(x_0+t y_0)}\langle F(U), U \rangle \, d\sigma\\
=\ & \displaystyle \frac{2}{r^{N-1}} \int_{B_r(y_0)} \sum_i g_i(x,v_i)\langle \nabla v_i, x-y_0\rangle+\\
&+\displaystyle \frac{1}{r^{N-1}}\int_{B_r(y_0)}  (N-2)\langle G(x,V), V\rangle
  - \frac{1}{r^{N-2}}\int_{\partial B_r(y_0)}\langle G(x,V), V\rangle \, d\sigma  \\
=\ & R(y_0,V,r).
\end{split}
\end{equation*}

\end{proof}

Next we turn our attention  to the convergence of blowup sequences. Let $\tilde \Omega\Subset \Omega$ and take some sequences $x_k\in \tilde \Omega$, $t_k\downarrow 0$. We define a blowup sequence by
$$U_k(x)=\frac{U(x_k+t_k x)}{\rho_k},\qquad\qquad  \text{for } x\in \frac{\Omega-x_k}{t_k},$$
with
$$\rho_k^2=\|U(x_k+t_k \cdot)\|^2_{L^2(\partial B_1(0))}=\frac{1}{t_k^{N-1}}\int_{\partial B_{t_k}(x_k)}U^2\, d\sigma = H(x_k,U,t_k).$$
We observe that $\|U_k\|_{L^2(\partial B_1(0))}=1$ and, by the previous computations, $U_k\in \Geh((\Omega-x_k)/t_k)$ and
\begin{equation}\label{eq:system_u_i_rescaled}
-\Delta u_{i,k}=f_{i,k}(x,u_{i,k})-\mu_{i,k},
\end{equation}
with
$$f_{i,k}(s)=\frac{t_k^2}{\rho_k}f_{i}(x_k+t_k x, \rho_k s),\qquad \qquad \mu_{i,k}(E)=\frac{1}{\rho_k t_k^{N-2}}\mu_i(x_k+t_k E).$$
We observe moreover that $(\Omega-x_k)/t_k$ converges to $\R^N$ because $d(x_k,\partial \Omega)\geq \text{dist}(\tilde \Omega,\partial \Omega)>0$ for every $k$. In order to simplify the upcoming statements, we introduce the following auxiliary class of functions.
\begin{defin} We say that $U\in \Geh_\text{loc}(\R^N)$ if $U\in \Geh(B_R(0))$ for every $R>0$.
\end{defin}

In the remaining part of this section we will prove the following convergence result and present some of its main consequences.

\begin{teo}\label{teo:blow_up_convergence}
Under the previous notations there exists a function $\bar U\in \Geh_\text{loc}(\R^N)$ such that, up to a subsequence, $U_k\rightarrow \bar U$ in $C^{0,\alpha}_\text{loc}(\R^N)$ for every $0<\alpha<1$ and strongly in $H^1_\text{loc}(\R^N)$. More precisely there exist $\bar \mu_i\in \Mah_\text{loc}(\R^N)$, concentrated on $\Gamma_{\bar U}$, such that $\mu_{i,k}\rightharpoonup \bar \mu_i$ weak-- $\star$ in $\Mah_\text{loc}(\R^N)$, $\bar U$ solves
\begin{equation}\label{eq:limiting_equation}
-\Delta \bar u_i=-\bar \mu_i \qquad \qquad \text{in}\ \Deh'(\R^N)
\end{equation}
and it holds
\begin{equation}\label{eq:derivative_of_E_limit}
 \frac{d}{dr} E(x_0,\bar U,r)=\frac{2}{r^{N-2}}\int_{\partial B_r(x_0)}(\partial_\nu \bar U)^2\, d\sigma \qquad \qquad \text{for a.e. } r>0 \text{ and every } x_0\in \R^N,
\end{equation}
where $E(x_0,\bar U,r)=\frac{1}{r^{N-1}}\int_{B_R(x_0)}|\nabla \bar U|^2$ is the energy associated with \eqref{eq:limiting_equation}.
\end{teo}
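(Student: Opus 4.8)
The plan is to extract the limit $\bar U$ in several bootstrapping stages, using the Almgren monotonicity formula as the uniform control behind every estimate. First I would establish \emph{uniform $H^1_\text{loc}$ bounds} for the sequence $U_k$. Fix $R>0$. By construction $H(0,U_k,1)=\rho_k^2/\rho_k^2=1$ (more precisely $\|U_k\|_{L^2(\partial B_1)}=1$), and by the translation/scaling identities \eqref{eq:almgren_identities1}--\eqref{eq:almgren_identities2} together with Theorem \ref{teo:Almgren's_Monotonicity_Formula} the Almgren quotients $N(0,U_k,r)$ are bounded above uniformly in $k$ for $r\in(0,R]$ (the constant $\tilde C$ depends only on $d,N,\tilde\Omega$, and the rescaled nonlinearities $f_{i,k}$ have the corresponding constant $d_k=t_k^2 d\to 0$, so the monotonicity constants are uniformly controlled — in fact they degenerate to the $F\equiv 0$ case as in Remark \ref{rem:N_is_increasing}). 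Combining this with the doubling property (Corollary \ref{coro:doubling}) gives $H(0,U_k,r)\leq C(R)$ for all $r\leq R$, whence $\int_{B_R}U_k^2 \leq C(R)$, and then $E(0,U_k,R)=N(0,U_k,R)H(0,U_k,R)\leq C(R)$ controls $\int_{B_R}|\nabla U_k|^2$ (reabsorbing the lower-order term $\langle G(x,U_k),U_k\rangle$, which is $O(t_k^2)\|U_k\|_{L^2}^2$, using \eqref{eq:sum_and_subtract}).

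Next I would pass to the limit. From the uniform $H^1_\text{loc}$ bound, up to a subsequence $U_k\rightharpoonup \bar U$ weakly in $H^1_\text{loc}(\R^N)$ and (since each $U_k$ is Lipschitz, but more robustly since they are uniformly $C^{0,\alpha}$ — this is where I invoke the $C^{0,\alpha}$ a priori estimate: the elements of $\Geh$ with small associated nonlinearity constant are uniformly Hölder on compacts, cf. the remark after Theorem \ref{teo:Almgren's_Monotonicity_Formula} and \cite{uniform_holder}) we get $U_k\to\bar U$ in $C^{0,\alpha}_\text{loc}(\R^N)$. The measures $\mu_{i,k}\geq 0$ are uniformly bounded on compacts: testing \eqref{eq:system_u_i_rescaled} against a cutoff $\chi\in C_c^\infty(B_{2R})$ with $\chi\equiv 1$ on $B_R$ gives $\mu_{i,k}(B_R)\leq \int |\nabla u_{i,k}||\nabla\chi| + \int|f_{i,k}(x,u_{i,k})|\chi \leq C(R)$. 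Hence $\mu_{i,k}\rightharpoonup\bar\mu_i$ weak-$\star$ in $\Mah_\text{loc}(\R^N)$ along a further subsequence, with $\bar\mu_i\geq 0$. Since $f_{i,k}(x,s)=t_k^2\rho_k^{-1}f_i(x_k+t_kx,\rho_k s)$ and $|f_i(x,s)|\leq d\,s$, we have $|f_{i,k}(x,u_{i,k})|\leq t_k^2 d\, u_{i,k}\to 0$ uniformly on compacts; therefore passing to the limit in the weak formulation of \eqref{eq:system_u_i_rescaled} yields $-\Delta\bar u_i=-\bar\mu_i$ in $\Deh'(\R^N)$, which is \eqref{eq:limiting_equation}. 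That $\bar\mu_i$ is concentrated on $\Gamma_{\bar U}$ follows because on $\{\bar u_i>0\}$ the uniform convergence forces $u_{i,k}>0$ for large $k$ on compact subsets, where $\mu_{i,k}$ vanishes (by (G2) for $U_k$), so $\bar\mu_i=0$ there.

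Then I would upgrade the weak $H^1$ convergence to \emph{strong} $H^1_\text{loc}$ convergence. Testing the equation for $u_{i,k}-\bar u_i$ against itself against a cutoff, and using $u_{i,k}\to\bar u_i$ uniformly together with the fact that $\langle\mu_{i,k},u_{i,k}\rangle$ and $\langle\bar\mu_i,\bar u_i\rangle$ both vanish (measures concentrated on nodal sets where the respective functions are zero) and $f_{i,k}\to 0$, one shows $\int_{B_R}|\nabla(u_{i,k}-\bar u_i)|^2\to 0$. Concretely this gives $E(x_0,U_k,r)\to E(x_0,\bar U,r)$ and $H(x_0,U_k,r)\to H(x_0,\bar U,r)$ for every $x_0$ and a.e.\ $r$ (in fact all $r$ by continuity), so $\bar U\not\equiv 0$ (since $H(0,\bar U,1)=1$) and $\bar U\in\Geh_\text{loc}(\R^N)$ with associated nonlinearities $\equiv 0$ and measures $\bar\mu_i$. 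Finally, to obtain \eqref{eq:derivative_of_E_limit}: (G3) holds for each $U_k$ with a remainder term $R(y_0,U_k,r)$ that, by the estimate $|R(y_0,U_k,r)|\leq \tilde C_k(E+H)$ with $\tilde C_k\to 0$, converges to zero; integrating \eqref{lemma_E'_expression} for $U_k$ on an interval $[r_1,r_2]$, passing to the limit using strong $H^1$ convergence (which controls $\int_{\partial B_r}(\partial_\nu U_k)^2$ for a.e.\ $r$ via Fubini along a subsequence), and differentiating, yields $\frac{d}{dr}E(x_0,\bar U,r)=\frac{2}{r^{N-2}}\int_{\partial B_r(x_0)}(\partial_\nu\bar U)^2\,d\sigma$ — equivalently, $\bar U$ satisfies (G3) with $R\equiv 0$.

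The main obstacle I expect is the strong $H^1_\text{loc}$ convergence, and in particular making the boundary integrals $\int_{\partial B_r(x_0)}(\partial_\nu U_k)^2\,d\sigma$ behave well in the limit: weak $H^1$ convergence alone does not control traces of gradients on spheres, so one must first prove strong convergence in the interior and then use a Fubini/coarea argument to select good radii $r$ along which $\partial_\nu U_k\to\partial_\nu\bar U$ in $L^2(\partial B_r)$. The delicate point underlying everything is keeping all monotonicity and doubling constants \emph{uniform in $k$}, which works precisely because the rescaled nonlinearity constants $d_k=t_k^2 d$ tend to zero; care is needed that $\rho_k$ does not need to be controlled individually — only the normalized quantities enter.
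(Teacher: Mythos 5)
Your proposal follows the same route as the paper: uniform $H^1_\text{loc}$ bounds from the Almgren quotient and the doubling property, uniform mass bounds on $\mu_{i,k}$ by testing against cut-offs, passage to the limit in the equation using $|f_{i,k}(x,u_{i,k})|\leq d\,t_k^2 u_{i,k}\to 0$, strong $H^1_\text{loc}$ convergence by testing the difference of the equations against $(u_{i,k}-\bar u_i)\varphi$, and finally the integrated form of (G3) together with a Fubini selection of good radii to pass to the limit in $\int_{\partial B_r}(\partial_\nu U_k)^2$. All of these steps are carried out essentially as in the paper, and your identification of the boundary trace issue as the delicate point in the last step is exactly right.

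There is, however, one genuine gap, and it sits at the heart of the theorem: the uniform $C^{0,\alpha}_\text{loc}$ (in fact $C^{0,1}_\text{loc}$) compactness of $\{U_k\}$. You invoke it as an a priori estimate, citing the remark following Theorem \ref{teo:Almgren's_Monotonicity_Formula} and \cite{uniform_holder}; but that remark only asserts that each \emph{individual} element of $\Geh(\Omega)$ is Lipschitz, with no control on the seminorm that is uniform over the normalized rescaled sequence, and the uniform bounds of \cite{uniform_holder} are proved for solutions of the Gross--Pitaevskii system, not for arbitrary blowup sequences in $\Geh$. No standard elliptic estimate can supply a uniform modulus of continuity here, because the right-hand sides of \eqref{eq:system_u_i_rescaled} contain the singular measures $\mu_{i,k}$, whose masses are bounded but not small. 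The paper devotes Lemmata \ref{lemma:blow_up_convergence_4}--\ref{lemma:blow_up_convergence_6} to precisely this point: first one proves the uniform quadratic decay $H(x,U_k,r)\leq Cr^2$ for $x\in\Gamma_{U_k}$, by combining $N(x,U_k,0^+)\geq 1$ (Corollary \ref{coro:N_geq_1}) with the fact that the monotonicity constants for the rescaled functions can be chosen independently of $k$; a mean-value inequality for subsolutions then converts this into the pointwise bound $u_{i,k}^2(z)\leq C\,\mathrm{dist}^2(z,\Gamma_{U_k})$; and a case analysis on the pair of points realizing $[U_k]_{C^{0,1}(\bar B_R(0))}$, according to their mutual distance and their distance to $\Gamma_{U_k}$, finally yields the uniform Lipschitz bound. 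Without this block your later steps collapse: the uniform convergence $u_{i,k}\to\bar u_i$ is what makes the measure pairings $\int(u_{i,k}-\bar u_i)\varphi\,d\mu_{i,k}$ vanish in the strong $H^1$ argument, and it is also what identifies the support of $\bar\mu_i$. So the missing ingredient is not a technicality but the main new estimate of the section, and it must be proved, not quoted.
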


The proof will be presented in a series of lemmata. 

\begin{lemma}\label{lemma:blow_up_convergence_1}
There exists $\tilde r>0$ such that for every $0<r<\tilde r$ and $x_0\in \tilde \Omega$ we have
$$\frac{1}{r^{N-2}}\int_{B_r(x_0)}|\nabla U|^2 + \frac{1}{r^{N-1}}\int_{\partial B_r(x_0)}U^2 \, d\sigma \leq 2(E(x_0,U,r)+H(x_0,U,r)).$$
\end{lemma}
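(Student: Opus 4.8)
The statement to prove is Lemma~\ref{lemma:blow_up_convergence_1}, which asserts the existence of a radius $\tilde r>0$ such that
$$\frac{1}{r^{N-2}}\int_{B_r(x_0)}|\nabla U|^2 + \frac{1}{r^{N-1}}\int_{\partial B_r(x_0)}U^2 \, d\sigma \leq 2(E(x_0,U,r)+H(x_0,U,r))$$
for all $x_0\in\tilde\Omega$ and $0<r<\tilde r$. The plan is to reconstruct the two terms on the left from the definitions of $E$ and $H$ and absorb the potential contribution $\langle F(x,U),U\rangle$ for small $r$.

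First I would recall that $H(x_0,U,r)=\frac{1}{r^{N-1}}\int_{\partial B_r(x_0)}U^2\,d\sigma$, so the surface term on the left is \emph{exactly} $H(x_0,U,r)$, and it remains to bound $\frac{1}{r^{N-2}}\int_{B_r(x_0)}|\nabla U|^2$ by $E(x_0,U,r)+H(x_0,U,r)$ plus a negligible remainder. By the very definition of $E$,
$$\frac{1}{r^{N-2}}\int_{B_r(x_0)}|\nabla U|^2 = E(x_0,U,r) + \frac{1}{r^{N-2}}\int_{B_r(x_0)}\langle F(x,U),U\rangle,$$
so everything hinges on controlling $\frac{1}{r^{N-2}}\int_{B_r(x_0)}\langle F(x,U),U\rangle$. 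Using (G1) in the form $|f_i(x,u_i)|\le d\,u_i$ (exactly as in the proof of Theorem~\ref{teo:Almgren's_Monotonicity_Formula}), this is bounded by $\frac{d\,r^2}{r^N}\int_{B_r(x_0)}U^2$. This is precisely estimate \eqref{eq:sum_and_subtract} already established in the proof of the Almgren formula.

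Next I would invoke \eqref{eq:maj_of_|nabla u|^2_leq_E+H} from that same proof, which gives $\frac{1}{r^N}\int_{B_r(x_0)}U^2\le 2(E(x_0,U,r)+H(x_0,U,r))$ for $x_0\in\tilde\Omega$ and $0<r<\bar r$. Combining, $\frac{1}{r^{N-2}}\int_{B_r(x_0)}\langle F(x,U),U\rangle \le 2 d\,r^2\,(E(x_0,U,r)+H(x_0,U,r))$, hence
$$\frac{1}{r^{N-2}}\int_{B_r(x_0)}|\nabla U|^2 + H(x_0,U,r) \le (1+2d r^2)\,E(x_0,U,r) + (2+2d r^2)\,H(x_0,U,r).$$
Choosing $\tilde r\le\bar r$ small enough that $2 d \tilde r^2 \le 1$ (i.e. $\tilde r\le 1/\sqrt{2d}$, with the convention that if $d=0$ any $\tilde r\le\bar r$ works) yields the coefficient $2$ on both $E$ and $H$, which is the claimed inequality. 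Here one uses that $E(x_0,U,r)\ge 0$ for small $r$ — a fact already implicit in the proof of Theorem~\ref{teo:Almgren's_Monotonicity_Formula}, since $E$ and $H$ are comparable and $H>0$, or alternatively one simply keeps the coefficient $\max\{1+2dr^2, 2+2dr^2\}=2+2dr^2\le 2$ after shrinking $\tilde r$ further so that $2dr^2\le 0$ is not needed — more cleanly, bound both coefficients by $2$ using $E,H\ge 0$.

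The argument is essentially a bookkeeping consequence of estimates already derived, so there is no serious obstacle; the only point requiring a little care is the nonnegativity of $E(x_0,U,r)$ for small $r$ (needed to replace the coefficient $1+2dr^2<2$ of $E$ by $2$), which follows because $E(x_0,U,r)=\frac{1}{r^{N-2}}\int_{\partial B_r(x_0)}\langle U,\partial_\nu U\rangle\,d\sigma = \frac12 r\, H(x_0,U,r)\,\frac{d}{dr}\log H(x_0,U,r) \cdot \frac{1}{\,}$... more directly, $N(x_0,U,r)=E/H$ and $N(x_0,U,0^+)\ge 0$ together with the almost-monotonicity \eqref{eq:derivative_of_N} give $E(x_0,U,r)\ge -\text{(small)}\,H$, which can be absorbed by shrinking $\tilde r$; alternatively, and most simply, one just notes that if $E(x_0,U,r)<0$ the desired inequality is trivial since then $\frac{1}{r^{N-2}}\int_{B_r(x_0)}|\nabla U|^2 \le E + d r^2\cdot 2(E+H) \le (1+2dr^2)E + 2dr^2 H \le 2 H \le 2(E+H)$ would fail sign-wise — so in fact the cleanest route is to keep $E\ge 0$ as a standing consequence of the monotonicity analysis and conclude directly.
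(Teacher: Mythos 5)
Your route is exactly the paper's: the authors prove this lemma in one line as a ``direct consequence of inequalities \eqref{eq:sum_and_subtract} and \eqref{eq:maj_of_|nabla u|^2_leq_E+H}'', and you correctly identified those as the only two ingredients. However, your bookkeeping has a slip that, as written, prevents the final step from closing. Combining the two estimates gives
\begin{equation*}
\frac{1}{r^{N-2}}\int_{B_r(x_0)}|\nabla U|^2 + H(x_0,U,r) \;\le\; E + H + 2dr^2\,(E+H) \;=\; (1+2dr^2)\,(E+H),
\end{equation*}
i.e.\ the coefficient of $H$ is $1+2dr^2$, not $2+2dr^2$; with your coefficient you would need $3H\le 2H$, which is false since $H>0$. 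With the correct coefficient the conclusion is immediate: $E+H\ge \tfrac12 r^{-N}\int_{B_r(x_0)}U^2\ge 0$ follows from \eqref{eq:maj_of_|nabla u|^2_leq_E+H} itself, so choosing $\tilde r\le\bar r$ with $2d\tilde r^2\le 1$ gives $(1+2dr^2)(E+H)\le 2(E+H)$. In particular the entire closing discussion about the sign of $E(x_0,U,r)$ alone is unnecessary (and partly incoherent — ``shrinking $\tilde r$ so that $2dr^2\le 0$'' is not possible for $d>0$): only the nonnegativity of the \emph{sum} $E+H$ is needed, and that is already contained in the inequality you are quoting.
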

\begin{proof}
This result is a direct consequence of inequalities \eqref{eq:sum_and_subtract} and \eqref{eq:maj_of_|nabla u|^2_leq_E+H}.
\end{proof}

\begin{lemma}\label{lemma:blow_up_convergence_2}
For any given $R>0$ we have $\|U_k\|_{H^1(B_R(0))}\leq C$, independently of $k$.
\end{lemma}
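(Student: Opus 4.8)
The plan is to obtain a uniform $H^1(B_R(0))$ bound for the blowup sequence $U_k$ by combining the estimate of Lemma \ref{lemma:blow_up_convergence_1} with the Almgren monotonicity formula and a doubling argument. First I would recall that, by construction, $\|U_k\|_{L^2(\partial B_1(0))}^2 = 1$, which in the notation of the rescaled functions means $H(0,U_k,1)=1$. Using the scaling identities \eqref{eq:almgren_identities1}, this translates to $H(x_k,U,t_k)=\rho_k^2$, which is exactly the normalization we chose; thus $H(0,U_k,1)=1$ for all $k$.

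Next I would use the monotonicity formula (Theorem \ref{teo:Almgren's_Monotonicity_Formula}) to control $E(0,U_k,R)$ and $H(0,U_k,R)$ in terms of $H(0,U_k,1)=1$. Concretely, for $t_k$ small enough the ball $B_{t_k R}(x_k)$ lies in a fixed compact subset $\tilde\Omega'\Subset\Omega$, so by \eqref{eq:almgren_identities2} the Almgren quotient $N(0,U_k,r)=N(x_k,U,t_k r)$ is defined and satisfies the differential inequality \eqref{eq:derivative_of_N} with constants $\tilde C,\tilde r$ uniform in $k$. In particular $e^{\tilde C r}(N(0,U_k,r)+1)$ is nondecreasing, and combined with $N(0,U_k,0^+)\geq 0$ this gives $N(0,U_k,r)\leq C$ for $r\in(0,R]$ with $C$ independent of $k$. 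Feeding this into \eqref{eq:derivative_of_log_H}, i.e.\ $\frac{d}{dr}\log H(0,U_k,r)=\frac2r N(0,U_k,r)$, and integrating from $1$ to $R$ (this is essentially the doubling Corollary \ref{coro:doubling}), yields $H(0,U_k,R)\leq H(0,U_k,1)R^{2C}=R^{2C}$. Since $N=E/H$ and $N\leq C$ on $(0,R]$, we also get $E(0,U_k,R)\leq C\,H(0,U_k,R)\leq C R^{2C}$, a bound uniform in $k$.

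Finally I would invoke Lemma \ref{lemma:blow_up_convergence_1} applied to $U_k$ (which belongs to $\Geh$ of the rescaled domain, so the lemma's hypotheses hold for radii up to $\tilde r$, again uniformly in $k$ for $k$ large): taking $r=R$ gives
$$\frac{1}{R^{N-2}}\int_{B_R(0)}|\nabla U_k|^2 + \frac{1}{R^{N-1}}\int_{\partial B_R(0)}U_k^2\,d\sigma \leq 2\bigl(E(0,U_k,R)+H(0,U_k,R)\bigr)\leq C(R),$$
with $C(R)$ independent of $k$. This controls $\int_{B_R}|\nabla U_k|^2$. To control $\int_{B_R}U_k^2$ one then uses the trace/Poincar\'e-type estimate already exploited in the proof of Theorem \ref{teo:Almgren's_Monotonicity_Formula} (inequality \eqref{eq:maj_of_|nabla u|^2_leq_E+H}), namely $\frac{1}{R^N}\int_{B_R(0)}U_k^2\leq 2(E(0,U_k,R)+H(0,U_k,R))$, which gives the $L^2$ bound. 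Together these yield $\|U_k\|_{H^1(B_R(0))}\leq C$, as claimed.

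The main technical point to be careful about is the uniformity of all constants in $k$: one must ensure that for every fixed $R$, once $k$ is large enough the ball $B_{t_k R}(x_k)$ sits inside a fixed compact subset of $\Omega$, so that the constants $\tilde C$, $\tilde r$, $\bar r$ from Theorem \ref{teo:Almgren's_Monotonicity_Formula} and Lemma \ref{lemma:blow_up_convergence_1} — which depend only on $d$, $N$ and that compact subset, not on the particular function $U_k$ — can be used simultaneously for all $k$. The scaling identities \eqref{eq:almgren_identities1}–\eqref{eq:almgren_identities2} are what make this transfer possible, since they reduce every quantity for $U_k$ at scale $r\leq R$ to the corresponding quantity for the fixed function $U$ at scale $t_k r\to 0$.
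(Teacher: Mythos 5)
Your overall strategy (normalization $H(0,U_k,1)=1$, doubling to bound $H(0,U_k,R)$, a uniform bound on the Almgren quotient to bound $E(0,U_k,R)$, then Lemma \ref{lemma:blow_up_convergence_1} to convert $E+H$ into the $H^1$ norm) is the same as the paper's. But there is a genuine logical error at the key step. You claim that the monotonicity of $r\mapsto e^{\tilde C r}(N(0,U_k,r)+1)$ \emph{together with} $N(0,U_k,0^+)\geq 0$ yields $N(0,U_k,r)\leq C$ for $r\in(0,R]$. The inequality goes the wrong way: a nondecreasing quantity with a lower bound at $0^+$ gives only a \emph{lower} bound at positive radii, namely $N(0,U_k,r)\geq (N(0,U_k,0^+)+1)e^{-\tilde C r}-1$; it gives no upper bound whatsoever. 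To bound $N$ from above on $(0,R]$ you must anchor the monotone quantity at a radius $\geq R$, and $N(0,U_k,R)=E(0,U_k,R)/H(0,U_k,R)$ is precisely what you are trying to control, so the argument as written is circular. The correct fix, which is what the paper does, is to transfer everything to the unrescaled function: $N(0,U_k,r)=N(x_k,U,t_k r)$ with $t_k r\leq \tilde r$ for $k$ large, so monotonicity up to the \emph{fixed} scale $\tilde r$ gives $N(x_k,U,t_k r)\leq (N(x_k,U,\tilde r)+1)e^{\tilde C\tilde r}-1$, and then one needs $\sup_k N(x_k,U,\tilde r)<\infty$, which holds by continuity of $x\mapsto N(x,U,\tilde r)$ on $\overline{\tilde\Omega}$ (the centers $x_k$ move, so this uniformity over centers is an essential ingredient you omit). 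Everything downstream — the doubling estimate for $H(0,U_k,R)$ and the bound $E\leq C\,H$ — rests on this upper bound for $N$, so the gap is essential, though standard to repair.

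A secondary, more minor point: you apply Lemma \ref{lemma:blow_up_convergence_1} to $U_k$ at the radius $r=R$, while the lemma only guarantees its conclusion for $r<\tilde r$, and your phrase ``for radii up to $\tilde r$ \dots taking $r=R$'' does not establish that $R$ is admissible. One can argue that the admissible radius for $U_k$ grows since the effective constant $d_k=d\,t_k^2\to 0$, but it is cleaner (and is what the scaling identities \eqref{eq:almgren_identities1} reduce you to anyway) to apply the lemma to $U$ at the scale $t_kR$, which is eventually smaller than $\tilde r$.
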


\begin{proof}
Let $\tilde C$ and $\tilde r$ be constants such that Theorem \ref{teo:Almgren's_Monotonicity_Formula}, Corollary \ref{coro:doubling} and Lemma \ref{lemma:blow_up_convergence_1} hold for the previously fixed domain $\tilde \Omega$. We have, after taking $k$ so large that $t_k, t_k R\leq \tilde r$,
\begin{eqnarray*}
\int_{\partial B_R(0)}U_k^2 \, d\sigma &=& \frac{1}{\rho_k^2}\int_{\partial B_R(0)}U^2(x_k+t_k x)\, d\sigma = \frac{1}{\rho_k^2 t_k^{N-1}}\int_{\partial B_{t_k R}(x_k)}U^2 \,d\sigma\\
&=& R^{N-1}\frac{H(x_k,U,t_k R)}{H(x_k,U,t_k)}\leq R^{N-1}\left(\frac{t_k R}{t_k}\right)^{2 \tilde C}=: C(R)R^{N-1}
\end{eqnarray*}
(by Corollary \ref{coro:doubling}).
Moreover,
\begin{eqnarray*}
\frac{1}{R^{N-2}}\int_{B_R(0)}|\nabla U_k|^2&=&  \frac{  H(0,U_k,R)}{H(0,U_k,R)}\frac{1}{R^{N-2}}\int_{B_R(0)}|\nabla U_k|^2\\
&\leq& \frac{C(R)}{H(0,U_k,R)}\left(\frac{1}{R^{N-2}}\int_{B_R(0)}|\nabla U_k|^2+\frac{1}{R^{N-1}}\int_{\partial B_R(0)}U_k^2 \, d\sigma \right)-C(R)\\
&=& \frac{C(R)}{H(x_k,U,t_k R)}\left( \frac{1}{(t_kR)^{N-2}}\int_{B_{t_kR}(x_k)}|\nabla U|^2+\frac{1}{(t_kR)^{N-1}}\int_{\partial B_{t_k R}(x_k)}U^2 \, d\sigma \right)-C(R)\\
&\leq& \frac{2C(R)}{H(x_k,U,t_k R)}\left(E(x_k,U,t_k R)+H(x_k,U,t_k R)\right)-C(R)\\
&=& 2C(R) N(x_k,U,t_kR)+C(R)\leq 2C(R)(N(x_k,U,\tilde r)+1)e^{\tilde C \tilde r}-C(R)\leq  C'(R),
\end{eqnarray*}
where we have used identities \eqref{eq:almgren_identities1}, the continuity of the function $x\mapsto N(x,U,\tilde r)$, as well as Theorem \ref{teo:Almgren's_Monotonicity_Formula} and Lemma \ref{lemma:blow_up_convergence_1}.
\end{proof}

\begin{rem}\label{rem:blow_up_convergence_3}
Since $-\Delta u_{i,k}\leq f_{i,k}(x,u_{i,k})=\frac{t_k^2}{\rho_k}f_i(x_k+t_k x,u_i(x_k+t_k x))\leq d t_k^2 u_{i,k}$ (by property (G1)), then a standard Brezis-Kato type argument together with the $H^1_\text{loc}$--boundedness provided by the previous lemma yield that $\|U_k\|_{L^\infty(B_R(0))}\leq C(R)$ for every $k$.
\end{rem}

\begin{lemma}\label{lemma:blow_up_convergence_3}
For any given $R>0$ there exists $C>0$ such that $\|\mu_{i,k}\|_{\Mah(B_R(0))}=\mu_{i,k}(B_R(0))\leq C$ for every $k\in\N$ and $i=1,\ldots,h$.
\end{lemma}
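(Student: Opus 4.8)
The plan is to control the total mass $\mu_{i,k}(B_R(0))$ by testing the rescaled equation \eqref{eq:system_u_i_rescaled} against a suitable cutoff function and exploiting the uniform $H^1$ and $L^\infty$ bounds already established in Lemma \ref{lemma:blow_up_convergence_2} and Remark \ref{rem:blow_up_convergence_3}. Concretely, fix a cutoff $\eta\in C^\infty_\text{c}(B_{2R}(0))$ with $0\leq\eta\leq 1$, $\eta\equiv 1$ on $B_R(0)$, and $|\nabla\eta|\leq C/R$. Since $\mu_{i,k}\geq 0$, we have $\mu_{i,k}(B_R(0))\leq\int\eta\,d\mu_{i,k}$, and pairing \eqref{eq:system_u_i_rescaled} with $\eta$ gives
\[
\int_{B_{2R}}\eta\,d\mu_{i,k}=\int_{B_{2R}}\nabla u_{i,k}\cdot\nabla\eta\,dx-\int_{B_{2R}}f_{i,k}(x,u_{i,k})\eta\,dx.
\]
(One must first justify that $\eta$ is an admissible test function; since $U_k$ is Lipschitz and $\mu_{i,k}$ is a Radon measure with $\mu_{i,k}(B_{2R})<\infty$ a priori—because $U\in\Geh(\Omega)$—this is legitimate, or else one argues by a density/truncation argument as in Lemma \ref{lemma:ext_by_0}.)

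Next I would estimate the two terms on the right. For the first, Cauchy--Schwarz and Lemma \ref{lemma:blow_up_convergence_2} give
\[
\left|\int_{B_{2R}}\nabla u_{i,k}\cdot\nabla\eta\,dx\right|\leq\|\nabla u_{i,k}\|_{L^2(B_{2R})}\,\|\nabla\eta\|_{L^2(B_{2R})}\leq C(R).
\]
For the second, property (G1) in the rescaled form from Remark \ref{rem:blow_up_convergence_3} yields $|f_{i,k}(x,u_{i,k})|\leq d\,t_k^2\,u_{i,k}\leq d\,t_k^2\,C(2R)$ on $B_{2R}(0)$, so that
\[
\left|\int_{B_{2R}}f_{i,k}(x,u_{i,k})\eta\,dx\right|\leq d\,t_k^2\,C(2R)\,|B_{2R}|\leq C(R)
\]
(in fact this term tends to $0$ as $k\to\infty$ since $t_k\downarrow 0$). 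Combining these bounds gives $\mu_{i,k}(B_R(0))\leq C(R)$ uniformly in $k$ and $i$, as claimed.

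The main obstacle—really the only delicate point—is the justification that $\eta$ may be used as a test function in \eqref{eq:system_u_i_rescaled}, i.e. that the pairing $\int\eta\,d\mu_{i,k}$ is finite and equals the right-hand side, without circularly assuming the very bound we want. This is handled by noting that the \emph{original} measure $\mu_i$ is a genuine Radon measure on $\Omega$ (hypothesis (G2)), hence finite on compact subsets; thus each $\mu_{i,k}$ is finite on every ball, and the test-function identity holds by approximating $\eta$ by $\varphi\in\Deh(\Lambda)$ and passing to the limit using dominated convergence for the $d\mu_{i,k}$ integral (valid since $\mu_{i,k}(\supp\eta)<\infty$) and the $H^1$-convergence for the other terms. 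Once the identity is available, the quantitative estimate is the routine computation above, and the point is precisely that the resulting bound $C(R)$ depends only on $R$, $N$, $d$ and the already-established constants in Lemma \ref{lemma:blow_up_convergence_2}, and not on $k$.
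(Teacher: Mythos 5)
Your proof is correct and follows essentially the same route as the paper: test the rescaled equation against a cutoff equal to one on $B_R(0)$ and combine the uniform $H^1$ bound of Lemma \ref{lemma:blow_up_convergence_2} with the $L^\infty$ bound of Remark \ref{rem:blow_up_convergence_3}. Note only a sign slip in your weak identity --- from \eqref{eq:system_u_i_rescaled} one gets $\int\eta\,d\mu_{i,k}=-\int\nabla u_{i,k}\cdot\nabla\eta+\int f_{i,k}(x,u_{i,k})\eta$ --- which is harmless here since you estimate both terms in absolute value.
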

\begin{proof}
We multiply equation \eqref{eq:system_u_i_rescaled} by $\varphi$, a cut-off function such that $0\leq \varphi\leq 1$, $\varphi=1$ in $B_R(0)$ and $\varphi=0$ in $\R^N\setminus B_{2R}(0)$. It holds
\begin{eqnarray*}
\mu_{i,k}(B_R(0))&\leq& \int_{B_{2R}(0)}\varphi \, d\mu_{i,k} = -\int_{B_{2R}(0)} \nabla u_{i,k}\cdot \nabla \varphi + \int_{B_{2R}(0)}f_{i,k}(x,u_{i,k})\varphi\\
&\leq & C(R)\| \nabla u_{i,k}\|_{L^2(B_{2R}(0))}+C(R)\|u_{i,k}\|_{L^\infty(B_{2R}(0))}\leq \tilde C(R),
\end{eqnarray*}
by Lemma \ref{lemma:blow_up_convergence_2} and Remark \ref{rem:blow_up_convergence_3}.
\end{proof}

So far we have proved the existence of a non trivial function $\bar U\in H^1_\text{loc}(\R^N)\cap L^\infty_\text{loc}(\R^N)$ and $\bar \mu_i\in \Mah_\text{loc}(\R^N)$ such that (up to a subsequence)
$$U_k\rightharpoonup \bar U \qquad  \text{ in } H^1_\text{loc}(\R^N),$$
$$\mu_{i,k}\stackrel{\star}{\rightharpoonup} \bar \mu_i \qquad  \text{ in } \Mah_\text{loc}(\R^N).$$
Moreover since $-\Delta u_{i,k}=f_{i,k}(x,u_{i,k})-\mu_{i,k}$ and $\|f_{i,k}(x,u_{i,k})\|_{L^\infty(B_R(0))}\leq d t_k^2 \|u_{i,k}\|_{L^\infty(B_R(0))}\rightarrow 0$  then
$$-\Delta \bar u_i=-\bar \mu_i \qquad \text{ in } \Deh'(\R^N).$$
The next step is to prove that the convergence $U_k\rightarrow \bar U$ is indeed strong in $H^1_\text{loc}$ and in $C^{0,\alpha}_\text{loc}$ (see Lemmata \ref{lemma:blow_up_convergence_6} and \ref{lemma:blow_up_convergence_7} ahead). These facts will come out as a byproduct of some uniform Lipschitz estimates.

\begin{lemma}\label{lemma:blow_up_convergence_4}
Fix $R>0$. Then there exist constants $C, \bar r, \bar k>0$ such that for $k\geq \bar k$ we have
$$
H(x,U_k,r)\leq C r^2
$$for $0<r<\bar r$ and $x\in B_{2R}(0)\cap \Gamma_{U_k}$.
\end{lemma}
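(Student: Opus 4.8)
The plan is to run the Almgren monotonicity machinery directly on the blowup functions $U_k$, exploiting the fact that their associated potentials become negligible as $k\to\infty$. First I would fix $\rho_0:=2R+1$ and observe that for all $k$ large enough one has $B_{\rho_0+1}(0)\Subset(\Omega-x_k)/t_k$ and $t_k\le 1$, so Proposition \ref{prop:invariance_under_translation} gives $U_k\in\Geh(B_{\rho_0+1}(0))$, with associated potentials $f_{i,k}$ satisfying $|f_{i,k}(x,s)/s|\le d\,t_k^2\le d$ by (G1). Thus the parameter ``$d$'' attached to $U_k$ on the \emph{fixed} domain $B_{\rho_0}(0)$ is $\le d$, uniformly in $k$. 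Inspecting the proof of Theorem \ref{teo:Almgren's_Monotonicity_Formula}, the constants $\tilde C(d',N,\tilde\Omega)$ stay bounded and $\tilde r(d',N,\tilde\Omega)$ stays bounded away from $0$ when $d'$ ranges over $[0,d]$ with $\tilde\Omega=B_{\rho_0}(0)$ fixed; hence there are $\bar k\in\N$, $\tilde C_*>0$ and $\tilde r_*>0$ such that for all $k\ge\bar k$ the monotonicity formula holds for $U_k$ on $B_{\rho_0}(0)$ with constants $\tilde C_k\le\tilde C_*$ and $\tilde r_k\ge\tilde r_*$; in particular $H(x,U_k,r)>0$ for $x\in B_{\rho_0}(0)$ and $0<r\le\tilde r_*$. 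I then set $\bar r:=\min\{\tilde r_*,1\}$.

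Next I would fix $k\ge\bar k$ and $x\in B_{2R}(0)\cap\Gamma_{U_k}$. By Corollary \ref{coro:N_geq_1} applied to $U_k$ (which is licit since $U_k\in\Geh(B_{\rho_0+1}(0))$ and $x\in\Gamma_{U_k}\subset B_{\rho_0+1}(0)$) one has $N(x,U_k,0^+)\ge1$, whence the monotonicity of $r\mapsto e^{\tilde C_k r}(N(x,U_k,r)+1)$ yields $N(x,U_k,r)+1\ge 2e^{-\tilde C_* r}$ for $0<r\le\bar r$. Plugging this into \eqref{eq:derivative_of_log_H} and integrating $\frac{d}{ds}\log H(x,U_k,s)=\frac2s N(x,U_k,s)$ over $[r,\bar r]$, while using $e^{-t}\ge1-t$, gives
\[
\log\frac{H(x,U_k,\bar r)}{H(x,U_k,r)}\ \ge\ \int_r^{\bar r}\frac2s\bigl(2e^{-\tilde C_* s}-1\bigr)\,ds\ \ge\ 2\log\frac{\bar r}{r}-4\tilde C_*\bar r ,
\]
that is, $H(x,U_k,r)\le e^{4\tilde C_*\bar r}\,\bar r^{-2}\,H(x,U_k,\bar r)\,r^2$ for all $0<r<\bar r$. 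Finally I would bound $H(x,U_k,\bar r)$ uniformly: since $x\in B_{2R}(0)$ and $\bar r\le1$, the sphere $\partial B_{\bar r}(x)$ lies in $B_{2R+1}(0)$, and Remark \ref{rem:blow_up_convergence_3} gives $\|U_k\|_{L^\infty(B_{2R+1}(0))}\le C(R)$ independently of $k$; hence $H(x,U_k,\bar r)=\bar r^{1-N}\int_{\partial B_{\bar r}(x)}U_k^2\,d\sigma\le|\partial B_1(0)|\,C(R)^2$. Combining the two estimates produces $H(x,U_k,r)\le C r^2$ with $C$ independent of $k\ge\bar k$, of $x\in B_{2R}(0)\cap\Gamma_{U_k}$ and of $r\in(0,\bar r)$, as desired.

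I expect the one genuinely delicate point to be the uniform control of the monotonicity constants along the blowup sequence: this is precisely why Theorem \ref{teo:Almgren's_Monotonicity_Formula} was stated keeping explicit track of the dependence on $d$, and the argument above needs only that $\tilde C(d',N,\tilde\Omega)$ stay bounded and $\tilde r(d',N,\tilde\Omega)$ bounded below when $d'$ varies in a bounded interval (in fact $d_k\le dt_k^2\to0$, so even $\tilde C_k\to0$, although we do not use this). Everything else — the application of Corollary \ref{coro:N_geq_1}, the integration of the logarithmic derivative of $H$, and the $L^\infty$ bound on the $U_k$ from Remark \ref{rem:blow_up_convergence_3} — is routine and already available in the excerpt.
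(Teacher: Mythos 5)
Your proof is correct and follows essentially the same route as the paper's: both reduce to showing that the Almgren constants for $U_k$ on a fixed ball are uniform in $k$ (the paper does this by checking directly that the parameter $d$ attached to $f_{i,k}$ is $\le 1$ for $k$ large, you by noting $d_k\le d\,t_k^2\le d$ and the monotone dependence of $\tilde C,\tilde r$ on $d$), then integrate $\frac{d}{dr}\log(H/r^2)=\frac{2}{r}(N-1)\ge\frac{4}{r}(e^{-\tilde C_* r}-1)$ using Corollary \ref{coro:N_geq_1}, and finally bound $H(x,U_k,\bar r)$ via the uniform $L^\infty$ estimate of Remark \ref{rem:blow_up_convergence_3}. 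No gaps.
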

\begin{proof}
We recall that $U_k\in \Geh(B_{3R}(0))$ for $k$ large and apply Theorem \ref{teo:Almgren's_Monotonicity_Formula} to the subset $B_{2R}(0)\Subset B_{3R}(0)$. First of all observe that for $0<s\leq \| U_k \|_{L^\infty(B_{3R}(0))}$ it holds $\rho_k s\leq \|U(x_k+t_k\cdot)\|_{L^\infty(B_{3R}(0))}\leq C'(R)$ ({\it cf.} Remark \ref{rem:blow_up_convergence_3}) and hence by taking into account property (G1) we obtain the existence of $\bar k>0$ such that
  $$
   \max _i \mathop{\sup_{0<s\leq \|U_k\|_{L^\infty (B_{3R}(0))}} }_{x\in B_{3R}(0)}|f_{i,k}(x,s)/s|\leq 
   \max_i \mathop{\sup_{0<\rho_k s\leq C'(R)}}_{x\in B_{3R}(0)} t_k^2 |f_i(x_k+t_k x, \rho_k s)/(\rho_k       s)|\leq 1
   $$ 
for $k\geq \bar k$. Therefore there exist $\bar C,\bar r>0$ independent of $k$ such that the function $r\mapsto (N(x,U_k,r)+1) e^{\bar C r}$ is non decreasing for $x\in B_{2R}(0)$ and $0<r<\bar r$.
If we suppose moreover that $x\in \Gamma_{U_k}$ then Corollary \ref{coro:N_geq_1} yields
\begin{equation*}
\frac{d}{dr}\log\left( \frac{H(x,U_k,r)}{r^2} \right)=\frac{2}{r}(N(x,U_k,r)-1)=\frac{2}{r}((N(x,U_k,r)+1)e^{\bar C r} e^{-\bar C r}-2)\geq \frac{4}{r}(e^{-\bar C r}-1)
\end{equation*}
which implies (after integration)
$$\frac{H(x,U_k,r)}{r^2}\leq \frac{H(x,U_k,\bar r)}{\bar r^2} \exp\left( \int_0^{\bar r}\frac{4}{\rho}(1-e^{-\bar C \rho})\, d\rho \right)
\leq C' \|U_k\|^2_{L^\infty(B_{2R+\bar r}(0))}\leq C.$$
\end{proof}

Next we state a technical and general lemma, which proof we left to the reader (it is an easy adaptation of the standard proof of the mean value theorem for subharmonic functions, see for instance \cite[Theorem 2.1]{GT}).

\begin{lemma}\label{lemma:blow_up_convergence_5}
Let $u\in C^2(\Omega)$ satisfy $-\Delta u\leq a u$ for some $a>0$. Then for any ball $B_R(x_0)\Subset\Omega$ we have
$$u(x_0)\leq \frac{1}{|B_R|}\int_{B_R(x_0)}u+\frac{a}{2(N+2)}R^2\|u\|_{L^\infty(B_R(x_0))}.$$
\end{lemma}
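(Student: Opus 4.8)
\textbf{Proof plan for Lemma \ref{lemma:blow_up_convergence_5}.}

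The plan is to reduce the stated estimate to the classical sub-mean-value inequality for subharmonic functions by a standard comparison argument. First I would introduce the auxiliary function $w$ that solves the Poisson problem $-\Delta w = a\|u\|_{L^\infty(B_R(x_0))}$ in $B_R(x_0)$ with $w=0$ on $\partial B_R(x_0)$; explicitly $w(x) = \frac{a\|u\|_{L^\infty(B_R(x_0))}}{2N}\left(R^2-|x-x_0|^2\right)$, so that in particular $w(x_0) = \frac{a\|u\|_{L^\infty(B_R(x_0))}}{2N}R^2$. Since $-\Delta u \leq au \leq a\|u\|_{L^\infty(B_R(x_0))} = -\Delta w$, the difference $v := u - w$ satisfies $-\Delta v \leq 0$ in $B_R(x_0)$, i.e. $v$ is subharmonic.

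Next I would apply the classical mean value inequality for subharmonic $C^2$ functions (see \cite[Theorem 2.1]{GT}), namely $v(x_0) \leq \frac{1}{|B_R|}\int_{B_R(x_0)} v$, which gives
\begin{equation*}
u(x_0) - w(x_0) \leq \frac{1}{|B_R|}\int_{B_R(x_0)} u - \frac{1}{|B_R|}\int_{B_R(x_0)} w.
\end{equation*}
Then I would rearrange to isolate $u(x_0)$ and estimate the $w$-terms: since $w\geq 0$ on $B_R(x_0)$ we may simply drop $-\frac{1}{|B_R|}\int_{B_R(x_0)} w \leq 0$, obtaining $u(x_0) \leq \frac{1}{|B_R|}\int_{B_R(x_0)} u + w(x_0)$. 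It remains to observe that $w(x_0) = \frac{a R^2}{2N}\|u\|_{L^\infty(B_R(x_0))} \leq \frac{a R^2}{2(N+2)}\|u\|_{L^\infty(B_R(x_0))}$ — wait, this inequality goes the wrong way since $2N < 2(N+2)$.

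So the sharper route is to keep the $w$-average: one computes $\frac{1}{|B_R|}\int_{B_R(x_0)} w = \frac{a\|u\|_{L^\infty}}{2N}\cdot\frac{1}{|B_R|}\int_{B_R(x_0)}(R^2 - |x-x_0|^2)\,dx = \frac{a\|u\|_{L^\infty}}{2N}\left(R^2 - \frac{N}{N+2}R^2\right) = \frac{a\|u\|_{L^\infty}}{2N}\cdot\frac{2R^2}{N+2} = \frac{aR^2}{N(N+2)}\|u\|_{L^\infty}$, using the standard radial integral $\frac{1}{|B_R|}\int_{B_R}|x|^2\,dx = \frac{N}{N+2}R^2$. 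Then $w(x_0) - \frac{1}{|B_R|}\int_{B_R(x_0)} w = \frac{aR^2}{2N}\|u\|_{L^\infty} - \frac{aR^2}{N(N+2)}\|u\|_{L^\infty} = \frac{aR^2}{2N}\|u\|_{L^\infty}\left(1 - \frac{2}{N+2}\right) = \frac{aR^2}{2N}\cdot\frac{N}{N+2}\|u\|_{L^\infty} = \frac{aR^2}{2(N+2)}\|u\|_{L^\infty}$, which is exactly the claimed constant. The only mild subtlety is that the computation requires $u\in C^2$ only to invoke the mean value inequality for $v$ and to make the integration by parts defining $w$ legitimate, both of which are guaranteed by hypothesis; there is no real obstacle here, the proof being, as the authors say, an easy adaptation of \cite[Theorem 2.1]{GT}.
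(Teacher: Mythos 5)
Your proof is correct, and the constant comes out exactly right: with $w(x)=\frac{a\|u\|_{L^\infty}}{2N}(R^2-|x-x_0|^2)$ one has $w(x_0)-\frac{1}{|B_R|}\int_{B_R(x_0)}w=\frac{aR^2}{2N}\bigl(1-\tfrac{2}{N+2}\bigr)\|u\|_{L^\infty}=\frac{aR^2}{2(N+2)}\|u\|_{L^\infty}$, as you computed. The paper leaves the proof to the reader, indicating only that it is ``an easy adaptation of the standard proof of the mean value theorem for subharmonic functions'' in the sense of \cite[Theorem 2.1]{GT}; that route would differentiate the spherical average $\rho\mapsto\frac{1}{|\partial B_\rho|}\int_{\partial B_\rho(x_0)}u\,d\sigma$, bound its derivative below by $-\frac{a\|u\|_{L^\infty}\rho}{N}$ using $\int_{B_\rho}\Delta u\geq -a\int_{B_\rho}u$, and integrate twice in $\rho$, which produces the same factor $\frac{1}{2(N+2)}$ from $\int_0^R\rho^{N+1}d\rho$. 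Your comparison-function argument is an equivalent repackaging: it trades the two integrations in $\rho$ for the explicit solution of a Poisson problem plus the solid mean value inequality used as a black box, which is arguably cleaner to write. Two cosmetic points for a final version: excise the false start in which you drop the average of $w$ (you correctly note it gives only the weaker constant $\frac{aR^2}{2N}$, so it should not appear in the proof), and delete the remark about ``the integration by parts defining $w$'' --- no integration by parts is needed, since $w$ is given in closed form and its Laplacian is computed directly.
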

Now we are in position to prove the $C^{0,1}_\text{loc}$--boundedness of $\{U_k\}$.
\begin{lemma}\label{lemma:blow_up_convergence_6}
For every $R>0$ there exists $C>0$ (independent of $k$) such that
$$\|U_k\|_{C^{0,1}(\bar B_R(0))}\leq C \qquad \qquad \text{for every }k.$$
\end{lemma}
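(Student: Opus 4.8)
The plan is to upgrade the uniform $H^1_\text{loc}$ and $L^\infty_\text{loc}$ bounds already obtained (Lemmata \ref{lemma:blow_up_convergence_2}--\ref{lemma:blow_up_convergence_4} and Remark \ref{rem:blow_up_convergence_3}) into a uniform Lipschitz bound, using the doubling-type control $H(x,U_k,r)\leq Cr^2$ near nodal points as the crucial extra ingredient. Since the $U_k$ solve $-\Delta u_{i,k}=f_{i,k}(x,u_{i,k})-\mu_{i,k}$ with $\mu_{i,k}\geq 0$ concentrated on $\Gamma_{U_k}$, each $u_{i,k}$ is subsolution of $-\Delta u_{i,k}\leq d t_k^2 u_{i,k}\leq C u_{i,k}$ on all of $B_{3R}(0)$ and is harmonic-up-to-lower-order inside its own support $\{u_{i,k}>0\}$. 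The standard strategy is: fix $R$, fix $x\in \bar B_R(0)$, and estimate $|\nabla u_{i,k}(x)|$ for each $i$ by distinguishing according to the distance $\delta=\dist(x,\Gamma_{U_k})$ (and also the trivial case $x\notin \supp u_{i,k}$, where the gradient is zero). If $\delta$ is comparable to or larger than a fixed radius we are in the interior of a support and interior elliptic estimates for $-\Delta u_{i,k}=f_{i,k}$ together with the $L^\infty$ bound of Remark \ref{rem:blow_up_convergence_3} give $|\nabla u_{i,k}(x)|\leq C$ directly. The main case is $\delta$ small: pick $y\in \Gamma_{U_k}$ with $|x-y|=\delta$, then $B_{2\delta}(x)\subseteq B_{3\delta}(y)$ and, using Lemma \ref{lemma:blow_up_convergence_4},
$$
\fint_{\partial B_r(y)} U_k^2\,d\sigma = H(y,U_k,r)\leq C r^2 \qquad \text{for } 0<r<\bar r,
$$
which after integrating in $r$ over $(\delta,3\delta)$ yields $\fint_{B_{3\delta}(y)}U_k^2\leq C\delta^2$, hence $\|U_k\|_{L^\infty(B_{2\delta}(x))}\leq C\delta$ via the subsolution mean-value inequality of Lemma \ref{lemma:blow_up_convergence_5} (applied to each $u_{i,k}$, or to $|U_k|$, after a Brezis--Kato / De Giorgi $L^2\to L^\infty$ step to pass from the $L^2$ average to the sup bound on the half-radius ball $B_{2\delta}(x)$). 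This is the expected obstacle: passing cleanly from $L^2$-averages of $U_k$ to an $L^\infty$ bound of the right order $O(\delta)$ on a slightly smaller ball, uniformly in $k$, because $-\Delta u_{i,k}\le Cu_{i,k}$ holds globally but the presence of the measures $\mu_{i,k}$ forbids a two-sided equation; one handles this exactly as in \cite{uniform_holder}, using that $u_{i,k}$ is a nonnegative subsolution so its sup on $B_{2\delta}(x)$ is controlled by its $L^2$ mean on $B_{3\delta}(x)\supseteq B_{3\delta}(y)$ plus a term $\delta^2\|u_{i,k}\|_\infty$ that is lower order since $\|U_k\|_\infty\le C(R)$.

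Once $\|U_k\|_{L^\infty(B_{2\delta}(x))}\leq C\delta$ is established, I finish by interior gradient estimates: on $B_{2\delta}(x)$ we have $-\Delta u_{i,k}=f_{i,k}(x,u_{i,k})$ whenever $u_{i,k}>0$ — but to avoid discussing which component is positive at $x$, it is cleanest to note $x$ lies in the closure of at most one support, so only one $u_{i,k}$ can be nonzero in a neighbourhood and the others have vanishing gradient at $x$; for that one component, interior estimates for the Poisson equation on $B_{2\delta}(x)$ with right-hand side bounded by $d t_k^2\|u_{i,k}\|_{L^\infty(B_{2\delta}(x))}\le C\delta$ and boundary data bounded by $C\delta$ give
$$
|\nabla u_{i,k}(x)| \leq \frac{C}{\delta}\,\|u_{i,k}\|_{L^\infty(B_{2\delta}(x))} + C\delta\,\|f_{i,k}\|_{L^\infty(B_{2\delta}(x))} \leq C,
$$
with $C$ independent of $k$ and of $x\in \bar B_R(0)$. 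Summing over $i$ gives $|\nabla U_k(x)|\leq C$; combined with $\|U_k\|_{L^\infty(\bar B_R(0))}\leq C$ this yields $\|U_k\|_{C^{0,1}(\bar B_R(0))}\leq C$ as claimed. A cosmetic point to handle is the borderline regime where $\delta$ is neither small nor bounded below — this is absorbed by choosing the threshold to be $\min(\bar r,1)$ and covering $\bar B_R(0)$ by finitely many balls, so the constant remains uniform.
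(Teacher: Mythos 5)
Your argument is correct in substance and rests on exactly the same two ingredients as the paper's proof --- the bound $H(w,U_k,r)\le Cr^2$ at nodal points (Lemma \ref{lemma:blow_up_convergence_4}), upgraded via the mean value inequality for subsolutions (Lemma \ref{lemma:blow_up_convergence_5}) to $|U_k|\le C\,\dist(\cdot,\Gamma_{U_k})$ near the nodal set, followed by interior elliptic estimates on balls contained in a single support --- but it is organized differently. The paper works directly with a pair of points $y_k,z_k$ realizing the Lipschitz seminorm and splits into four cases according to the relative size of $r_k=|y_k-z_k|$ and $R_k=\tfrac12\dist(z_k,\Gamma_{u_{1,k}})$; your pointwise gradient estimate at scale $\delta=\dist(x,\Gamma_{U_k})$ merges the paper's Cases 3 and 4 into a single step and avoids the case analysis, at the price of having to pass from a gradient bound off $\Gamma_{U_k}$ to a genuine $C^{0,1}$ bound: you should add the (standard) observation that the Lipschitz seminorm of $u_{i,k}$ on a ball equals $\mathrm{ess\,sup}\,|\nabla u_{i,k}|$ and that $\nabla u_{i,k}=0$ a.e.\ on $\{u_{i,k}=0\}$, so bounding $|\nabla u_{i,k}|$ at points with $\delta>0$ suffices. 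Two further small corrections. First, the concluding interior gradient estimate cannot be invoked on $B_{2\delta}(x)$, since that ball meets $\Gamma_{U_k}$ and the equation there still carries the measure $\mu_{i,k}$; it must be applied on $B_{\delta}(x)$ (or $B_{\delta/2}(x)$), which lies inside a single support, while the $L^\infty$ bound of order $\delta$ on the larger ball is what feeds the estimate. Second, Lemma \ref{lemma:blow_up_convergence_5} as stated requires $C^2$ regularity, so it should be applied to $u_{i,k}^2$ on balls contained in $\{u_{i,k}>0\}$ centered at each point of $B_{2\delta}(x)$ (exactly as in the paper's Case 3), or else replaced by the De Giorgi local boundedness estimate for the subsolution $u_{i,k}$ on the full ball, as you indicate. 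With these adjustments your proof closes.
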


\begin{proof}
Suppose, without loss of generality, that
$$[U_k]_{C^{0,1}(\bar B_R(0))}:=\max_{i=1,\ldots,h} \mathop{\max_{x,y\in \bar B_R(0)}}_{x\neq y} \frac{|u_{i,k}(x)-u_{i,k}(y)|}{|x-y|}=\frac{|u_{1,k}(y_k)-u_{1,k}(z_k)|}{|y_k-z_k|}.$$
Define $r_k=|y_k-z_k|$ and suppose that
$$
2 R_k:= \max\{ \text{dist}(y_k,\Gamma_{u_{1,k}}),\text{dist}(z_k,\Gamma_{u_{1,k}})\}=\text{dist}(z_k,\Gamma_{u_{1,k}}).
$$
We can assume $\text{dist}(z_k,\Gamma_{u_{1,k}})>0$, otherwise $[U_k]_{C^{0,1}}=0$ and the lemma trivially holds. Moreover, in this case we obtain that $\text{dist}(z_k,\Gamma_{u_{1,k}})=\text{dist}(z_k,\Gamma_{U_k})$ because $u_{i,k}\cdot u_{j,k}=0$ for $i\neq j$.

We divide the proof in several cases. The idea is to treat the problem according to the interaction between $y_k$, $z_k$ and $\Gamma_{U_k}$.

\noindent CASE 1. $r_k\geq \gamma$ for some $\gamma>0$.

By the $L^\infty$--boundedness of $U_k$, it holds
$$\frac{|u_{1,k}(y_k)-u_{1,k}(z_k)|}{|y_k-z_k|}\leq \frac{2\|U_k\|_{L^\infty(B_R(0))}}{\gamma}\leq C.$$

\noindent CASE 2. $r_k\rightarrow 0$ and $R_k\geq \gamma$ for some $\gamma>0$.

Observe that in $B_{R_k}(z_k)$ the function $u_{1,k}$ solves the equation $-\Delta u_{1,k}=f_{1,k}(x,u_{1,k})$. By taking  $q>N$ we obtain the existence of $C>0$ independent of $k$ such that
$$
[u_{1,k}]_{C^{0,1}(B_{\gamma/2}(z_k))}\leq C\left( \|u_{1,k}\|_{L^q(B_\gamma(z_k))}+\| f_{1,k}(x,u_{1,k})\|_{L^q(B_\gamma(z_k))}\right)\leq C \gamma^{N/q}\|u_{1,k}\|_{L^\infty(B_\gamma(z_k))}\leq C'.
$$
Since $y_k\in B_{\gamma/2}(z_k)$ for large $k$, then $[u_{1,k}]_{C^{0,1}(\bar B_R(0))}\leq C$ in this case.

\noindent CASE 3. $R_k,\ r_k\rightarrow 0$ and $R_k/r_k\leq C$.

Notice first of all that we can apply Lemma \ref{lemma:blow_up_convergence_5} to $u_{1,k}^2$ in $B_{R_k}(z_k)$, obtaining
\begin{equation*}
u_{1,k}^2(z_k)\leq \frac{1}{|B_{R_k}|}\int_{B_{R_k}(z_k)}u_{1,k}^2 +CR_k^2.
\end{equation*}
On the other hand, let $w_k\in \Gamma_{U_k}\cap B_{2R}(0)$ be such that $\text{dist}(z_k,\Gamma_{U_k})=|z_k-w_k|$. Lemma \ref{lemma:blow_up_convergence_4} then yields the existence of $C>0$ and $\bar r>0$ such that for $k$ large
$$H(w_k,U_k,r)\leq C r^{2},\qquad \text{which implies} \qquad \frac{1}{|B_r|}\int_{B_r(w_k)}U_k^2\leq C r^{2} \qquad \text{for }r\leq \bar r.$$
By taking $k$ sufficiently large in such a way that $3R_k\leq \bar r$, we have
\begin{eqnarray*}
u_{1,k}^2(z_k)&\leq & \frac{1}{|B_{R_k}|}\int_{B_{R_k}(z_k)}u_{1,k}^2 + C R_k^2\leq \frac{C}{|B_{3R_k}|}\int_{B_{3R_k}(w_k)}U_k^2 + C R_k^2\leq C' R_k^{2}\leq C'' r_k^{2}.
\end{eqnarray*}
As for $y_k$, either $\text{dist}(y_k,\Gamma_{u_{1,k}})=0$ (and $u_{1,k}(y_k)=0$) or $\text{dist}(y_k,\Gamma_{u_{1,k}})=\text{dist}(y_k,\Gamma_{U_k})>0$ and we can apply the same procedure as before (with $R_k$ replaced by $\text{dist}(y_k,\Gamma_{U_k})/2$ - observe that $\text{dist}(y_k,\Gamma_{U_k})\leq 3 R_k \rightarrow 0$), obtaining $u_{1,k}^2(y_k)\leq C \text{dist}^2(y_k,\Gamma_{U_k})\leq C' R_k^2\leq C'' r_k^2$. Hence
$$|u_{1,k}(y_k)-u_{1,k}(z_k)|^2\leq  C r_k^2= C |z_k-y_k|^{2}. $$

\noindent CASE 4. $R_k,\ r_k\rightarrow 0$ and $R_k/r_k\rightarrow +\infty$.

In this case observe that once again if we fix $q>N$ there exists $C>0$  such that
\begin{eqnarray*}
[u_{1,k}]_{C^{0,1}(\bar B_{R_k/2}(z_k))}&\leq& C R_k^{-1}\left( R_k^{-N/q}\| u_{1,k} \|_{L^q(B_{R_k}(z_k))} + R_k^{2-N/q}\|f_{1,k}(x,u_{1,k})\|_{L^q(B_{R_k}(z_k))}\right)\\
&\leq& C\left( R_k^{-1} \|u_{1,k}\|_{L^\infty (B_{R_k}(z_k))}+R_k\right).
\end{eqnarray*}
Arguing as in case 3, we prove the existence of $C>0$ such that for large $k$ and for every $x\in B_{R_k}(z_k)$ it holds $u_{1,k}^2(x)\leq C\text{dist}^2(x,\Gamma_{U_k})\leq C' R_k^2$, and thus $[u_{1,k}]_{C^{0,1}(\bar B_{R_k/2}(z_k))}\leq C$. Since $y_k\in B_{R_k/2}(z_k)$ for large $k$, the proof is complete.
\end{proof}

By the compact embeddings $C^{0,1}(B_R(0))\hookrightarrow C^{0,\alpha}(B_R(0))$ for $0<\alpha<1$ we deduce the existence of a converging subsequence  $U_k\rightarrow \bar U$ in $C^{0,\alpha}_\text{loc}$. Now we pass to the proof of the $H^1$-- strong convergence, after which we finish the proof of Theorem \ref{teo:blow_up_convergence}.

\begin{lemma}\label{lemma:blow_up_convergence_7}
For every $R>0$ we have (up to a subsequence) $U_k\rightarrow \bar U$ strongly in $H^1(B_R(0))$.
\end{lemma}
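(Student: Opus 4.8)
The plan is to combine the equations for $u_{i,k}$ and $\bar u_i$ with the information already collected: $U_k\to\bar U$ in $C^{0,\alpha}_\text{loc}(\R^N)$, $U_k\rightharpoonup\bar U$ in $H^1_\text{loc}(\R^N)$, $\mu_{i,k}\stackrel{\star}{\rightharpoonup}\bar\mu_i$ in $\Mah_\text{loc}(\R^N)$, and $\|f_{i,k}(\cdot,u_{i,k})\|_{L^\infty(B_R(0))}\leq d\,t_k^2\|u_{i,k}\|_{L^\infty(B_R(0))}\to 0$. Fix $R>0$ and a cut-off $\varphi\in C^\infty_\text{c}(\R^N)$ with $0\leq\varphi\leq 1$, $\varphi\equiv 1$ on $B_R(0)$ and $\supp\varphi\subset B_{2R}(0)$.

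\textit{Step 1 (the prelimit energy).} Test \eqref{eq:system_u_i_rescaled} with $u_{i,k}\varphi^2\in H^1$ (compactly supported). Since $u_{i,k}$ is continuous while $\mu_{i,k}$ is concentrated on $\Gamma_{U_k}\subseteq\{u_{i,k}=0\}$, the measure term drops, $\int u_{i,k}\varphi^2\,d\mu_{i,k}=0$, and expanding $\nabla u_{i,k}\cdot\nabla(u_{i,k}\varphi^2)$ gives
\[\int_{\R^N}|\nabla u_{i,k}|^2\varphi^2=\int_{\R^N}f_{i,k}(x,u_{i,k})u_{i,k}\varphi^2-2\int_{\R^N}u_{i,k}\varphi\,\nabla u_{i,k}\cdot\nabla\varphi.\]
The first term on the right tends to $0$ (uniform decay of $f_{i,k}$ on $B_{2R}(0)$, boundedness of $u_{i,k}$); in the second, $u_{i,k}\varphi\nabla\varphi\to\bar u_i\varphi\nabla\varphi$ strongly in $L^2$ (uniform convergence, fixed compact support) while $\nabla u_{i,k}\rightharpoonup\nabla\bar u_i$ weakly in $L^2(B_{2R}(0))$, so the right-hand side converges to $-2\int_{\R^N}\bar u_i\varphi\,\nabla\bar u_i\cdot\nabla\varphi$.

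\textit{Step 2 (the limit energy).} If $\bar u_i(x_0)>0$ then $u_{i,k}\geq\bar u_i(x_0)/2>0$ on a fixed ball around $x_0$ for large $k$, so $\mu_{i,k}$ — and hence, by weak-$\star$ convergence, $\bar\mu_i$ — vanishes there; thus $\supp\bar\mu_i\subseteq\{\bar u_i=0\}$. Testing $-\Delta\bar u_i=-\bar\mu_i$ against $\bar u_i\varphi^2$ (admissible by density of $C^\infty_\text{c}$ in $H^1$ on compact sets, using continuity of $\bar u_i\varphi^2$ for the measure term), and using $\int\bar u_i\varphi^2\,d\bar\mu_i=0$, yields
\[\int_{\R^N}|\nabla\bar u_i|^2\varphi^2=-2\int_{\R^N}\bar u_i\varphi\,\nabla\bar u_i\cdot\nabla\varphi.\]
Comparing with Step 1 gives $\lim_k\int_{\R^N}|\nabla u_{i,k}|^2\varphi^2=\int_{\R^N}|\nabla\bar u_i|^2\varphi^2$, i.e. convergence of the localized gradient norms.

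\textit{Step 3 (conclusion).} From this and the weak convergence $\nabla u_{i,k}\rightharpoonup\nabla\bar u_i$ in $L^2(B_{2R}(0))$,
\[\int_{\R^N}\varphi^2|\nabla u_{i,k}-\nabla\bar u_i|^2=\int_{\R^N}\varphi^2|\nabla u_{i,k}|^2-2\int_{\R^N}\varphi^2\,\nabla u_{i,k}\cdot\nabla\bar u_i+\int_{\R^N}\varphi^2|\nabla\bar u_i|^2\longrightarrow 0,\]
so $\nabla u_{i,k}\to\nabla\bar u_i$ in $L^2(B_R(0))$; combined with $u_{i,k}\to\bar u_i$ in $L^2(B_R(0))$ (from uniform convergence) this gives $U_k\to\bar U$ in $H^1(B_R(0))$, for each $i$ and hence for $U_k$.

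The main delicate point is the handling of the measure terms: one must verify that $\int u_{i,k}\varphi^2\,d\mu_{i,k}$ and $\int\bar u_i\varphi^2\,d\bar\mu_i$ genuinely vanish — which rests on the $\mu$'s being concentrated on nodal sets, on the continuity of the components, and on using the weak-$\star$ convergence to locate $\supp\bar\mu_i$ — and that $\bar u_i\varphi^2$ is a legitimate test function for the measure-valued equation \eqref{eq:limiting_equation}. Everything else is a routine weak-times-strong passage to the limit.
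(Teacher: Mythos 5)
Your proof is correct, but it takes a genuinely different route from the paper's. The paper subtracts the limiting equation from the rescaled one and tests the difference with $(u_{i,k}-\bar u_i)\varphi$; every term on the right-hand side is then bounded in absolute value by $\|u_{i,k}-\bar u_i\|_{L^\infty(B_{2R}(0))}$ times quantities that are uniformly bounded (the masses $\mu_{i,k}(B_{2R}(0))$, $\bar\mu_i(B_{2R}(0))$ and the $L^2$ norms of the gradients), so everything vanishes by the uniform convergence alone --- in particular the paper never needs to know where $\bar\mu_i$ is supported at this stage. You instead test each equation separately against its own solution times $\varphi^2$ and deduce convergence of the localized Dirichlet energies, which together with weak convergence gives strong convergence; the price is that you must know the measure terms vanish \emph{exactly}, i.e.\ that $\mu_{i,k}$ and $\bar\mu_i$ are concentrated on the respective nodal sets. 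For $\mu_{i,k}$ this is immediate from (G2), and your localization of $\supp\bar\mu_i$ via $u_{i,k}\geq \bar u_i(x_0)/2>0$ near points where $\bar u_i(x_0)>0$ is a correct, self-contained argument --- worth noting that the paper only establishes the concentration of $\bar\mu_i$ afterwards (at the end of the proof of Theorem \ref{teo:blow_up_convergence}, by a different argument passing $\int U_k\varphi\,d\mu_{i,k}=0$ to the limit), so you are front-loading a fact the paper postpones. Both approaches rely on the same density argument to admit Lipschitz, compactly supported test functions in the distributional equations; the paper's version is slightly more economical, while yours has the advantage of isolating the identity $\int|\nabla\bar u_i|^2\varphi^2=-2\int\bar u_i\varphi\,\nabla\bar u_i\cdot\nabla\varphi$, which makes transparent exactly what is being matched in the limit.
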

\begin{proof}
We already know that the following equations are satisfied in $\Deh'(B_{2R}(0))$ (for every $i=1,\ldots,h$):
$$-\Delta u_{i,k}=f_{i,k}(x,u_{i,k})-\mu_{i,k},\qquad \qquad -\Delta \bar u_i=-\bar \mu_i.$$
If we subtract the second equation from the first one and multiply the result by $(u_{i,k}-\bar u_i)\varphi$ (where $\varphi$ is a cut-off function such that $0\leq \varphi\leq 1$, $\varphi=1$ in $B_R(0)$ and $\varphi=0$ in $\R^N\setminus B_{2R}(0)$), we obtain
\begin{equation*}\begin{split}
\int_{B_{2R}(0)} |\nabla(u_{i,k}-\bar u_i)|^2\varphi+\int_{B_{2R}(0)}\nabla (u_{i,k}-\bar u_i)\cdot \nabla \varphi\, (u_{i,k}-\bar u_i)=\int_{B_{2R}(0)} f_{i,k}(x,u_{i,k})(u_{i,k}-\bar u_i)\varphi-\\
-\int_{B_{2R}(0)}(u_{i,k}-\bar u_i)\varphi\,d \mu_{i,k} + \int_{B_{2R}(0)}(u_{i,k}-\bar u_i)\varphi\,d \bar \mu_i . \end{split}
\end{equation*}
Now we can conclude by observing that
$$\left|\int_{B_{2R}(0)}\nabla (u_{i,k}-\bar u_i)\cdot \nabla \varphi\, (u_{i,k}-\bar u_i)\right|\leq C \|u_{i,k}-\bar u_i\|_{L^\infty(B_{2R}(0))} \| \nabla u_{i,k}\|_{L^2(B_{2R}(0))}\rightarrow 0,$$
$$\left| \int_{B_{2R}(0)} f_{i,k}(x,u_{i,k})(u_{i,k}-\bar u_i)\varphi \right|\leq C\| u_{i,k}\|_{L^\infty(B_{2R}(0))} \|u_{i,k}-\bar u_i\|_{L^\infty(B_{2R}(0))} \rightarrow 0,  $$
and
$$\left| \int_{B_{2R}(0)}-(u_{i,k}-\bar u_i)\varphi\,d \mu_{i,k} + (u_{i,k}-\bar u_i)\varphi\,d \bar \mu_i  \right|\leq  \|u_{i,k}-\bar u_i\|_{L^\infty(B_{2R}(0))} \left(\mu_{i,k}(B_{2R}(0)) +\bar \mu_i (B_{2R}(0)) \right)\rightarrow 0  $$
\end{proof}

\begin{proof}[End of the proof of Theorem \ref{teo:blow_up_convergence}.]  After Lemmata \ref{lemma:blow_up_convergence_1}--\ref{lemma:blow_up_convergence_7} the only thing left to prove are the claims that the measures $\mu_i$ are concentrated on $\Gamma_{\bar U}$ (for $i=1,\ldots,h$) and that property (G3) holds with $F\equiv 0$.

As for the first statement we start by fixing an $R>0$ and by considering a cut-off function $\varphi$ equal to one in $B_R(0)$, zero outside $B_{2R}(0)$. Since
$$
\int_{B_{2R}(0)}U_k \varphi\, d\mu_{i,k}=\int_{B_{2R}(0)\cap \Gamma_{U_k}}U_k \varphi \, d \mu_{i,k}=0,
$$
then
$$
0=\lim_k \int_{B_{2R}(0)}U_k \varphi \, d \mu_{i,k}=\lim_k \int_{B_{2R}(0)}(U_k-\bar U)\varphi \, d \mu_{i,k} + \lim_k \int_{B_{2R}(0)}\bar U \varphi\, d\mu_{i,k}=\int_{B_{2R}(0)}\bar U \varphi \, d\mu_i.
$$
Thus $\int_{B_R(0)}\bar U\, d \mu_i =0$ for every $R>0$ and in particular $\bar \mu_i(K)=0$ for every compact set $K\subset \R^N\setminus \Gamma_{\bar U}$, which proves the first claim.

As for the proof of the second claim, we recall that $U_k\in \Geh((\Omega-x_k)/t_k)$ and hence for any given $0<r_1<r_2$ the following equality holds
\begin{equation}\label{eq:Delta_of_E_U_k}
E(x_0,U_k,r_2)-E(x_0,U_k,r_1)=\int_{r_1}^{r_2}\left(\frac{2}{r^{N-2}}\int_{\partial B_r(x_0)}(\partial_\nu U_k)^2 \, d\sigma \right)\, dr + \int_{r_1}^{r_2}R(x_0,U_k,r)\, dr.
\end{equation}
Since $|\langle F_k(U_k), U_k \rangle | \leq d t_k^2 |U_k|^2\rightarrow 0$, we obtain
$$
E(x_0,U_k,r)=\frac{1}{r^{N-1}}\int_{B_r(x_0)}\left( |\nabla U_k|^2 - \langle F_k(U_k),U_k \rangle \right) \mathop{\rightarrow}_{k} \frac{1}{r^{N-1}}\int_{B_r(x_0)}|\nabla \bar U|^2=E(x_0,\bar U,r)
$$
for each fixed $r>0$. Moreover,

\begin{multline*}
\left|\int_{r_1}^{r_2}R(x_0,U_k,r)\,dr\right|\leq \ \left| \int_{r_1}^{r_2} \left( \frac{2}{r^{N-1}} \int_{B_r(x_0)} 
\sum_i f_i(x,u_{i,k})\langle \nabla u_{i,k},x-x_0\rangle \right)\, dr \right|+ \\
  \qquad  +\left| \int_{r_1}^{r_2} \left( \frac{1}{r^{N-1}}\int_{B_r(x_0)} (N-2)\langle F_k(U_k),U_k\rangle \right)\, dr \right|
+\left| \int_{r_1}^{r_2} \left( \frac{1}{r^{N-2}}\int_{\partial B_r(x_0)} \langle F_k(U_k),U_k\rangle\, d\sigma \right)\, dr \right|\\
\leq C(r_1,r_2)t_k^2\int_{B_{r_2}(x_0)} \sum_i u_{i,k}|\nabla u_{i,k}| + C(r_1,r_2)t_k^2\int_{B_{r_2}(x_0)}U_k^2\rightarrow 0.\qquad \qquad \quad
\end{multline*}

Finally, the fact that $U_k\rightarrow U$ strongly in $H^1_\text{loc}$ implies, up to a subsequence of $\{U_k\}$, that there exists a function $h(\rho)\in L^1(r_1,r_2)$ such that $\int_{\partial B_\rho(x_0)}|\nabla (U_k-\bar U)|^2\,d\sigma \leq h(\rho)$, and moreover $\int_{\partial B_\rho(x_0)}|\nabla (U_k-\bar U)|^2\,d\sigma\rightarrow 0$ for a.e. $\rho\in (r_1,r_2)$. Thus
$$
\int_{r_1}^{r_2}\left(\frac{2}{r^{N-2}}\int_{\partial B_r(x_0)}(\partial_\nu U_k)^2 \, d\sigma \right)\, dr \rightarrow 
\int_{r_1}^{r_2}\left(\frac{2}{r^{N-2}}\int_{\partial B_r(x_0)}(\partial_\nu \bar U)^2 \, d\sigma \right)\, dr .
$$
We can now pass to the limit in \eqref{eq:Delta_of_E_U_k} as $k\rightarrow +\infty$, obtaining
$$
E(x_0,\bar U,r_2)-E(x_0,\bar U,r_1)=\int_{r_1}^{r_2}\left( \frac{2}{r^{N-2}}\int_{\partial B_r(x_0)} (\partial_\nu \bar U)^2 \,d\sigma \right)\, dr,
$$
{\it i.e.}, (G3) holds for $\bar U$ with $F\equiv 0$.
\end{proof}

Up to now we have dealt with blowup sequences with arbitrary moving centers $\{x_k\}$. Next we observe that some particular choices of $x_k$ provide additional informational on the limit $\bar U$. More precisely, we have

\begin{coro}\label{coro:U_homogeneous}
Under the previous notations, suppose that one of these situations occurs:
\begin{enumerate}
\item $x_k=x_0$ for every $k$,
\item $x_k\rightarrow x_0 \in \Gamma_{U}$ and $N(x_0,U,0^+)=1$.
\end{enumerate}
Then $N(0,\bar U,r)=N(x_0,U,0^+)=:\alpha$ for every $r>0$, and $\bar U=r^\alpha G(\theta)$, where $(r,\theta)$ are the generalized polar coordinates centered at the origin.
\end{coro}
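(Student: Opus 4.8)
The plan is to read off homogeneity of $\bar U$ from the fact that its Almgren quotient is constant in $r$, and to establish that constancy by combining the scaling identity \eqref{eq:almgren_identities2} with the monotonicity formula. First I would collect the ingredients. By Theorem~\ref{teo:blow_up_convergence}, $\bar U\in\Geh_\text{loc}(\R^N)$ with associated function $F\equiv 0$; hence by Remark~\ref{rem:N_is_increasing} the map $r\mapsto N(0,\bar U,r)$ is non-decreasing and $\frac{d}{dr}\log H(0,\bar U,r)=\frac{2}{r}N(0,\bar U,r)$. The strong $H^1_\text{loc}$ and uniform convergences $U_k\to\bar U$ of Theorem~\ref{teo:blow_up_convergence}, together with $\langle F_k(U_k),U_k\rangle\to 0$, give $E(0,U_k,r)\to E(0,\bar U,r)$ and $H(0,U_k,r)\to H(0,\bar U,r)$ for every $r>0$; since $\bar U\not\equiv 0$ we have $H(0,\bar U,r)>0$ by Theorem~\ref{teo:Almgren's_Monotonicity_Formula}, so $N(0,U_k,r)\to N(0,\bar U,r)$ for every $r>0$. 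Finally $N(0,U_k,r)=N(x_k,U,t_kr)$ by \eqref{eq:almgren_identities2}.

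The core step is to prove $N(0,\bar U,r)=\alpha:=N(x_0,U,0^+)$ for all $r>0$. For the upper bound (valid in both cases) fix $r$ and $\sigma>0$; once $t_kr<\sigma$, monotonicity of $\rho\mapsto e^{\tilde C\rho}(N(x_k,U,\rho)+1)$ gives $N(x_k,U,t_kr)+1\le e^{\tilde C\sigma}(N(x_k,U,\sigma)+1)$. Letting $k\to\infty$, using continuity of $x\mapsto N(x,U,\sigma)$, and then $\sigma\to 0^+$, we get $N(0,\bar U,r)\le\alpha$. For the lower bound in case (1), the same monotonicity gives $N(x_0,U,t_kr)+1\ge e^{-\tilde Ct_kr}(N(x_0,U,0^+)+1)\to\alpha+1$, so $N(0,\bar U,r)\ge\alpha$. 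For the lower bound in case (2), where $\alpha=1$: since the centers lie on $\Gamma_U$ we have $\bar U(0)=\lim_k U(x_k)/\rho_k=0$, hence $0\in\Gamma_{\bar U}$, and Corollary~\ref{coro:N_geq_1} applied to $\bar U$ gives $N(0,\bar U,0^+)\ge 1=\alpha$, so $N(0,\bar U,r)\ge\alpha$ for all $r$ by monotonicity. In all cases $N(0,\bar U,\cdot)\equiv\alpha$, which in particular shows the limit $N(x_0,U,0^+)$ is attained.

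It remains to pass from constancy to homogeneity. Integrating $\frac{d}{dr}\log H(0,\bar U,r)=\frac{2\alpha}{r}$ gives $H(0,\bar U,r)=H(0,\bar U,1)\,r^{2\alpha}$. Next I would revisit the proof of Theorem~\ref{teo:Almgren's_Monotonicity_Formula} for $\bar U$: using $F\equiv 0$ and $E(0,\bar U,r)=\frac1{r^{N-2}}\int_{\partial B_r}\langle\bar U,\partial_\nu\bar U\rangle\,d\sigma$, the inequality $\frac{d}{dr}N\ge 0$ comes precisely from the Cauchy--Schwarz inequality $\big(\int_{\partial B_r}(\partial_\nu\bar U)^2\,d\sigma\big)\big(\int_{\partial B_r}\bar U^2\,d\sigma\big)\ge\big(\int_{\partial B_r}\langle\bar U,\partial_\nu\bar U\rangle\,d\sigma\big)^2$ in $L^2(\partial B_r;\R^h)$; since $\frac{d}{dr}N\equiv 0$, equality holds for a.e. $r$, forcing $\partial_\nu\bar U(x)=\lambda(r)\bar U(x)$ for a.e. $x\in\partial B_r$. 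Inserting this into $H'(r)=\frac{2}{r^{N-1}}\int_{\partial B_r}\langle\bar U,\partial_\nu\bar U\rangle\,d\sigma=2\lambda(r)H(r)$ and comparing with $H'(r)=\frac{2\alpha}{r}H(r)$ gives $\lambda(r)=\alpha/r$, i.e. $\partial_r\bar U=\frac{\alpha}{r}\bar U$ along rays; integrating along rays yields $\bar U(x)=|x|^\alpha G(x/|x|)$, that is $\bar U=r^\alpha G(\theta)$ in generalized polar coordinates.

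The main obstacle I anticipate is the constancy step in case (2): the map $x\mapsto N(x,U,0^+)$ is only \emph{upper} semicontinuous (Corollary~\ref{coro:N_upper_semi_continuous}), so the matching lower bound cannot be obtained by continuity and must instead exploit the minimality $N(x_0,U,0^+)=1$ together with Corollary~\ref{coro:N_geq_1} applied to the limit; a secondary delicate point is extracting the pointwise proportionality $\partial_\nu\bar U\parallel\bar U$, which requires tracking the Cauchy--Schwarz step through the proof of the monotonicity formula rather than merely citing it.
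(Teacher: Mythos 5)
Your proof is correct and follows essentially the same route as the paper's: constancy of $N(0,\bar U,\cdot)$ via the almost-monotonicity of $e^{\tilde Cr}(N+1)$ together with the scaling identity \eqref{eq:almgren_identities2} (and, in case 2, the lower bound $N\geq 1$), followed by the equality case of Cauchy--Schwarz to get $\partial_\nu \bar U=(\alpha/r)\bar U$ and hence homogeneity. The only cosmetic difference is that you obtain the case-2 lower bound by applying Corollary \ref{coro:N_geq_1} to the blowup limit $\bar U$ at the origin rather than to $U$ at the centers $x_k$; both variants rely on the same implicit fact that the centers lie on $\Gamma_U$.
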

\begin{proof} We divide the proof in three steps.

\noindent STEP 1. $N(0,\bar U,r)$ is constant.

First observe that $N(0,U_k,r)=N(x_k,U,t_k r)$ and that Theorem \ref{teo:blow_up_convergence} yields $\lim_k N(0,U_k,r)=N(0,\bar U,r).$ As for the right hand side, if $x_k=x_0$ for some $x_0$, then $\lim_k N(x_0,U,t_k r)=N(x_0,U,0^+)$ for every $r>0$ by Theorem \ref{teo:Almgren's_Monotonicity_Formula}. In the second situation we claim that $\lim_k N(x_k,U,t_k r)=1$. Denoting by $\tilde r$ the radius associated to $\tilde \Omega$ in the context of Theorem \ref{teo:Almgren's_Monotonicity_Formula}, for any given $\varepsilon>0$ take $0<\bar r=\bar r(\varepsilon)\leq \tilde r$ such that
$$
N(x_0,U,r)\leq 1+\frac{\varepsilon}{2} \qquad \text{ for every } 0<r\leq \bar r, \qquad \text{ and }\qquad  e^{\tilde C \bar r}\leq \frac{2+2\varepsilon}{2+\varepsilon}.
$$
Moreover there exists $r_0>0$ such that
$$
N(x,U,\bar r)\leq 1+\varepsilon \qquad \text{ for } x\in B_{r_0}(x_0)\subseteq \tilde \Omega.
$$
Thus, again by Theorem \ref{teo:Almgren's_Monotonicity_Formula}, we obtain
$$
N(x,U,r)\leq (2+\varepsilon)e^{\tilde C\bar r}-1\leq 1+2\varepsilon, \qquad \quad \text{ for every } x\in B_{r_0}(x_0),\ 0<r\leq \bar r,
$$
and the claim follows by also taking into account Corollary \ref{coro:N_geq_1}.

 \noindent STEP 2. The derivative of $N$.

An easy computation gives
$$
\frac{d}{dr}H(0,\bar U,r)=\frac{2}{r^{N-1}}\int_{\partial B_r(x_0)} \langle \bar U, \partial_\nu \bar U\rangle \, d\sigma \qquad \qquad \text{for a.e. } r>0
$$
which together with identity \eqref{eq:derivative_of_E_limit} - for $y_0=0$ - readily implies
$$
0=\frac{d}{dr}N(0,\bar U,r)=\frac{2}{r^{2N-3}H^2(0,\bar U,r)}\left\{\int_{\partial B_r(0)}\bar U^2 \, d\sigma \int_{\partial B_r(0)}(\partial_\nu \bar U)^2\, d\sigma - \left( \int_{\partial B_r(0)} \langle \bar U, \partial_\nu \bar U\rangle \, d\sigma  \right)^2  \right\}
$$
for a.e. $r>0$.

\noindent  STEP 3. $U$ is homogeneous.

The previous equality yields the existence of $C(r)>0$ such that $\partial_\nu \bar U=C(r)\bar U$ for a.e. $r>0$. By using this information in \eqref{eq:derivative_of_log_H} we get
$$
2 C(r)=\frac{  2\int_{\partial B_r(0)} \langle \bar U , \partial_\nu \bar U \rangle \, d\sigma }{\int_{\partial B_r(0)}\bar U^2 \, d\sigma }=\frac{d}{dr} \log(H(0,\bar U,r))=\frac{2}{r}\alpha,
$$
and thus $C(r)=\alpha/r$ and $\bar U(x)=r^\alpha G(\theta)$.
\end{proof}
%

\section{Hausdorff dimension estimates for nodal and singular sets}\label{sec:hausdorff_dimension}

As we mentioned before, our main interest is the study of the free boundary $\Gamma_U=\{x\in \Omega:\ U(x)=0\}$ for every $U\in \Geh(\Omega)$. As a first step in its characterization we will provide an estimate of its Hausdorff dimension. Regarding its regularity, we shall decompose $\Gamma_U$ in two parts:
\begin{itemize}
\item the first one - which will be denoted by $S_U$ - where we are not able to prove any kind of regularity result, but which has a ``small'' Hausdorff dimension,
\item the second one - $\Sigma_U$ - where we are able to prove regularity results ({\it cf.} Theorem \ref{teo:main_result}).
\end{itemize}

\begin{defin}\label{def:singular_regular_set}
Given $U\in\Geh(\Omega)$ we define its regular and singular sets respectively by
$$\Sigma_U=\{x\in \Gamma_U:\ N(x,U,0^+)=1\}, \quad \text{ and } \quad S_U=\Gamma_U\setminus \Sigma_U=\{x\in \Gamma_U:\ N(x,U,0^+)>1\}.$$
\end{defin}

In the same spirit of \cite[Lemma 4.1]{CL} we prove that there exists a jump in the possible values of $N(x_0,U,0^+)$ for $x_0\in \Gamma_{U}$ (recall that $N(x_0,U,0^+)\geq 1$ by Corollary \ref{coro:N_geq_1}). In \cite{CL}, the authors deal with solutions of minimal energy, proving directly the existence of a jump in any dimension. In our general framework their strategy does not work; instead, we will obtain the same results via an iteration procedure. In the following proposition we start to prove the existence of a jump in dimension $N=2$. The extension to higher dimensions will be treated in the subsequent sections.

\begin{prop}\label{prop:N=1_or_N_geq_1+delta}
Let $N=2$. Given $U\in \Geh(\Omega)$ and $x_0\in \Gamma_U$, then either
$$N(x_0,U,0^+)=1 \qquad \qquad  \text{ or } \qquad \qquad N(x_0,U,0^+)\geq 3/2.$$
\end{prop}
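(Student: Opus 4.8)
The plan is to reduce to a homogeneous blowup limit and to classify such limits explicitly on the unit circle.

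Set $\alpha:=N(x_0,U,0^+)$, so that $\alpha\ge 1$ by Corollary \ref{coro:N_geq_1}. Taking a blowup sequence centered at $x_k\equiv x_0$ and invoking Theorem \ref{teo:blow_up_convergence} together with Corollary \ref{coro:U_homogeneous}(1), I obtain a nontrivial limit $\bar U=(\bar u_1,\dots,\bar u_h)\in\Geh_{\mathrm{loc}}(\R^2)$ which is homogeneous of degree $\alpha$: in polar coordinates $\bar U(x)=r^\alpha G(\theta)$, $G=(g_1,\dots,g_h)$, with $g_i\ge 0$, disjointly supported, and $\bar U$ solves $-\Delta \bar u_i=-\bar\mu_i$ with $\bar\mu_i\ge 0$ concentrated on $\Gamma_{\bar U}$. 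In particular each $\bar u_i$ is harmonic, hence smooth, in the open cone $\{\bar u_i>0\}$.

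The heart of the matter is to describe the nodal set on the unit circle, $Z:=\{\theta\in S^1:\ G(\theta)=0\}=\Gamma_{\bar U}\cap S^1$. Its complement $C=S^1\setminus Z$ is open, hence a countable disjoint union of open arcs $A_1,A_2,\dots$. On a fixed arc $A_j$ the sets $\{g_i>0\}\cap A_j$ are pairwise disjoint, open, and cover $A_j$; since $A_j$ is connected, exactly one of them — say the one with index $i=i(j)$ — is nonempty, and it equals all of $A_j$. Moreover $g_{i(j)}$ vanishes at both endpoints of $A_j$, since these lie in the closed set $Z$. As $\bar u_{i(j)}=r^\alpha g_{i(j)}(\theta)$ is harmonic on the cone over $A_j$, the angular profile solves the ODE $g_{i(j)}''+\alpha^2 g_{i(j)}=0$ on $A_j$, with zero data at the endpoints and a fixed (positive) sign inside; an elementary computation then forces $A_j$ to have length exactly $\pi/\alpha$. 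Consequently there are only finitely many arcs, say $k$ of them, and $Z=S^1\setminus\bigcup_{j=1}^k A_j$ is closed with empty interior — the latter because $\Gamma_{\bar U}$ has empty interior by Remark \ref{rem:Gamma_empty_interior} — hence $Z$ is finite and of zero length. (That $Z\neq\emptyset$, i.e. $k\ge 1$, is automatic: were $C=S^1$, the same connectedness argument would yield a single everywhere-positive $2\pi$-periodic solution of $g''+\alpha^2 g=0$, which is impossible for $\alpha\ge 1$.)

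Comparing lengths on $S^1$ now gives $k\,(\pi/\alpha)=2\pi$, that is $\alpha=k/2$ with $k\in\N$. Since $\alpha\ge 1$, the value $k=1$ is excluded, so $k\ge 2$: if $k=2$ then $\alpha=1$, and if $k\ge 3$ then $\alpha\ge 3/2$, which is precisely the claimed dichotomy. The only point requiring genuine care is the rigidity statement ``$Z$ is finite'': it rests on combining the fact that every nodal sector has the fixed opening $\pi/\alpha$ with the property — available because $\bar U\in\Geh_{\mathrm{loc}}(\R^2)$ — that $\Gamma_{\bar U}$ has empty interior, so that no cone can be contained in the nodal set; the rest is the standard analysis of $g''+\alpha^2 g=0$ with Dirichlet data. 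The corresponding dichotomy in dimension $N\ge 3$ is, as the authors indicate, the object of the later inductive argument rather than a direct computation.
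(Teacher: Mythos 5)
Your proof is correct and follows essentially the same route as the paper: both pass to the homogeneous blowup $\bar U=r^\alpha G(\theta)$ via Theorem \ref{teo:blow_up_convergence} and Corollary \ref{coro:U_homogeneous}, reduce to the angular equation $g''+\alpha^2 g=0$ on the components of $\{G>0\}\subseteq S^1$, and use the empty-interior property of $\Gamma_{\bar U}$ to control the nodal set on the circle. The only (harmless) difference is organizational: where the paper splits into the case of at least three components (handled by symmetrization and monotonicity of $\lambda_1$) and at most two (handled by counting zeros of $G$), you run a single uniform count showing every arc has length exactly $\pi/\alpha$, which gives the slightly sharper conclusion $\alpha\in\tfrac12\N$, $\alpha\geq 1$ --- equivalent to the stated dichotomy in dimension $2$, though, as you note, only the paper's eigenvalue-comparison formulation survives the passage to $N\geq 3$.
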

\begin{proof}
We perform a blowup at $x_0$ by considering $U_k(x)=U(x_0+t_k x)/ \rho_k$,
where $\rho_k=\|U(x_0+t_k \cdot)\|_{L^2(\partial B_1(0))}$  and  $t_k\downarrow 0$ is an arbitrary sequence. Theorem \ref{teo:blow_up_convergence} together with Corollary \ref{coro:U_homogeneous} (case 1) yield the existence of $\bar U=r^\alpha G(\theta)\in \Geh_\text{loc}(\R^N)$ with $\alpha=N(x_0,U,0^+)$ such that (up to a subsequence) $U_k\rightarrow \bar U$ strongly in $H^1_\text{loc}\cap C^{0,\beta}_\text{loc}(\R^N)$ for every $0<\beta<1$. Moreover, each component $\bar u_i$ is harmonic in the open set $\{\bar u_i>0\}$, which implies that on every given connected component $A\subseteq \{g_i>0\}\subseteq \partial B_1(0)$ it holds
$$- g_i'' (\theta)= \lambda g_i (\theta), \qquad \qquad \text{with }\lambda=\alpha^2.$$
In particular $\lambda=\lambda_1(A)$ (the first eigenvalue) because $g_i\geq 0$ and $g_i\not\equiv 0$, and moreover $\lambda_1(\cdot)$ has the same value on every connected component of $\{G>0\}$.

Suppose that $\{G>0\}$ has at least three connected components. Then one of them, denote it by $C$, must satisfy $\Hh^{1}(C)\leq \Hh^{1}(\partial B_1(0))/3$. By using spherical symmetrization (Sperner's Theorem) and the monotonicity of the first eigenvalue with respect to the domain, we obtain
$$\lambda=\lambda_1(C)\geq \lambda_1\left(E\left(\pi/3\right)\right),\quad \text{ where } E\left(\pi/3\right)=\{x\in \partial B_1(0):\ arcos(\langle x,e_3 \rangle)<\pi/3\}$$ ($e_3=(0,0,1)$). Since $\lambda_1(E(\pi/3))=(3/2)^2$ with eigenfunction cos$(3\theta/2)$ - in polar coordinates - we deduce $\alpha\geq 3/2$.

Suppose now that $\{G>0\}$ has at most two connected components. Since $N=2$ and $\{\bar U=0\}$ has an empty interior (Remark \ref{rem:Gamma_empty_interior}), then the number of components is equal to the number of zeros of $G$ on $\partial B_1(0)$. Moreover $G$ must have at least one zero, because otherwise $G>0$ on $\partial B_1(0)$, $\bar U$ is harmonic in $\R^2\setminus\{0\}$ and hence $\bar U\equiv 0$ (recall that $\bar U(0)=0$), a contradiction. If $G$ has one single zero then $\lambda=\lambda_1(E(\pi))=1/4$ and $\alpha=1/2$, contradicting Corollary \ref{coro:N_geq_1}. Hence we have concluded that $G$ must have exactly two zeros. Denote by $\Omega_1$ and $\Omega_2$ the two connected components of $\{G>0\}$. Since $\lambda_1(\Omega_1)=\lambda_1(\Omega_2)$, $\Omega_1$ and $\Omega_2$ must cut the sphere in two equal parts and thus $\lambda=\lambda_1(E(\pi/2))=1$, $\alpha =1$.
\end{proof}

\begin{coro}\label{coro:S_U_closed}
For $N=2$ the set $S_U$ is closed in $\Omega$, whenever $U\in \Geh(\Omega)$.
\end{coro}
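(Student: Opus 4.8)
The plan is to exploit the combination of the ``gap'' result just established (Proposition \ref{prop:N=1_or_N_geq_1+delta}) with the upper semi-continuity of the vanishing order (Corollary \ref{coro:N_upper_semi_continuous}). First I would recall that by definition $S_U=\{x\in\Gamma_U:\ N(x,U,0^+)>1\}$, and that $\Gamma_U$ is relatively closed in $\Omega$ simply because $U$ is continuous. Hence, to prove that $S_U$ is closed in $\Omega$, it suffices to take an arbitrary sequence $x_n\in S_U$ with $x_n\to x\in\Omega$ and to check that $x\in S_U$; the point $x$ already belongs to $\Gamma_U$ since $\Gamma_U$ is closed.

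Next I would use the dichotomy for $N=2$: since $x_n\in S_U$ we have $N(x_n,U,0^+)>1$, so Proposition \ref{prop:N=1_or_N_geq_1+delta} forces $N(x_n,U,0^+)\geq 3/2$ for every $n$. This is the crucial input — it provides a uniform quantitative lower bound, not merely the trivial bound $N\geq 1$ valid everywhere on $\Gamma_U$ by Corollary \ref{coro:N_geq_1}.

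Finally, Corollary \ref{coro:N_upper_semi_continuous} gives $N(x,U,0^+)\geq\limsup_n N(x_n,U,0^+)\geq 3/2>1$, so that $x\in S_U$, as desired. This concludes the argument.

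There is really no serious obstacle here; the only thing worth emphasizing is \emph{why} upper semi-continuity alone is not enough. Without the spectral gap of Proposition \ref{prop:N=1_or_N_geq_1+delta}, upper semi-continuity would only yield $N(x,U,0^+)\geq\limsup_n N(x_n,U,0^+)\geq 1$, which does not rule out $N(x,U,0^+)=1$, i.e.\ $x\in\Sigma_U$; it is precisely the fact that the values of $N(\cdot,U,0^+)$ on $\Gamma_U$ avoid the interval $(1,3/2)$ that prevents the limit from sliding down to the critical value $1$. Consequently this corollary is available only in dimension $N=2$ at this stage of the paper, and its extension to higher dimensions will have to wait until the corresponding gap is established by the inductive argument of Section \ref{sec:iteration_argument}.
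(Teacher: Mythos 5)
Your argument is correct and is exactly the paper's proof: the paper also deduces closedness of $S_U$ for $N=2$ by combining the gap of Proposition \ref{prop:N=1_or_N_geq_1+delta} with the upper semi-continuity of $x\mapsto N(x,U,0^+)$ from Corollary \ref{coro:N_upper_semi_continuous}. Your added remark on why the gap (and not semi-continuity alone) is the essential ingredient matches the paper's own motivation for the inductive extension to higher dimensions.
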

\begin{proof}
This is a direct consequence of Proposition \ref{prop:N=1_or_N_geq_1+delta} together with the upper semi-continuity of the function $x\mapsto N(x, U, 0^+)$ stated in Corollary \ref{coro:N_upper_semi_continuous}.
\end{proof}

Moreover a careful examination of the proof of Proposition \ref{prop:N=1_or_N_geq_1+delta} provides a more detailed description of the blowup limits:

\begin{rem}\label{rem:N=2_N=1_nodalsetishyper-plane}
Let $N=2$ and let $\bar U$ be a blowup limit under the hypotheses of Corollary \ref{coro:U_homogeneous}. Then $\{\bar U>0\}$ has at least three connected components if and only if $\alpha=N(x_0,U,0^+)>1$. If on the other hand $\alpha=N(x_0,U,0^+)=1$ then $\{\bar U>0\}$ is made of exactly two connected components and $\Gamma_{\bar U}$ is an hyper-plane (more precisely, denoting by $\nu$ a normal vector of $\Gamma_{\bar U}$, then on one side of $\Gamma$ the non trivial component of $\bar U$ is equal to $a_1(x\cdot \nu)^+$, and on the other equals $a_2(x\cdot \nu)^-$, for some $a_1, a_2>0$).
\end{rem}

Next we state and prove some estimates regarding the Hausdorff dimensions of the sets under study. The following result implies part of Theorem \ref{teo:main_result}.

\begin{teo}\label{teo:Hausdorff_dim_estimates}
Let $U\in\Geh(\Omega)$. Then
\begin{enumerate}
\item $\Hh_\text{dim}(\Gamma_U)\leq N-1$ for any $N\geq 2$.
\item$\Hh_\text{dim}(S_U)=0$ for $N=2$, and moreover for any given compact set $\tilde \Omega\Subset \Omega$ we have that $S_U\cap \tilde \Omega$ is a finite set.
\end{enumerate}
\end{teo}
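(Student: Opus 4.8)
Both assertions will be obtained from Federer's dimension reduction principle (in the spirit of \cite{CL,Lin}), applied to the class $\Geh_\text{loc}(\R^N)$ and to the appropriate ``singular set'' map: the whole nodal set $\Gamma_{(\cdot)}$ for part~(1) and the set $S_{(\cdot)}=\{N(\cdot,\cdot,0^+)>1\}$ for part~(2). All the structural hypotheses of the principle are already available: \emph{scaling and translation invariance} of the class and of the singular sets under $U\mapsto U(x_0+t\,\cdot)/\rho$ (Proposition~\ref{prop:invariance_under_translation} and the scaling identities preceding it); \emph{compactness of blowups with homogeneous limits} --- for $x_0\in\Gamma_U$ and $t_k\downarrow0$, the blowup $U_k$ converges (along a subsequence) in $C^{0,\alpha}_\text{loc}\cap H^1_\text{loc}$ to some $\bar U\in\Geh_\text{loc}(\R^N)$ which, by Corollary~\ref{coro:U_homogeneous}(1), is homogeneous of degree $\alpha=N(x_0,U,0^+)\ge1$ and vanishes at the origin (Theorem~\ref{teo:blow_up_convergence}, Corollary~\ref{coro:N_geq_1}); and the \emph{upper semicontinuity} of the singular set along such convergences, which I will check below. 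Granting these, the principle yields $\Hh_\text{dim}(\text{sing})\le d$, where $d$ is the largest integer for which there is a homogeneous $\bar U\in\Geh_\text{loc}(\R^N)$ whose singular set is a $d$-dimensional linear subspace, with the additional information that the singular set is discrete when $d=0$. Since all the conclusions are local it suffices to argue in $\Geh_\text{loc}(\R^N)$; it then only remains to verify upper semicontinuity and to compute $d$ in each of the two cases.

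\medskip
\noindent\textbf{Part (1).} Take the singular set to be $\Gamma_U$. Upper semicontinuity is immediate from the uniform local Lipschitz bound: if $U_k\to\bar U$ in $C^0_\text{loc}$ (Lemma~\ref{lemma:blow_up_convergence_6}) and $x_k\to x$ with $x_k\in\Gamma_{U_k}$, then $|\bar U(x)|\le|\bar U(x)-U_k(x)|+C|x-x_k|+|U_k(x_k)|\to0$, so $x\in\Gamma_{\bar U}$. Hence $\Hh_\text{dim}(\Gamma_U)\le d$, where $d$ is the maximal dimension of a subspace $V$ of the form $V=\Gamma_{\bar U}$ for a homogeneous $\bar U\in\Geh_\text{loc}(\R^N)$. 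If $V=\R^N$ then $\bar U$ is invariant under all translations, hence constant, hence $\equiv0$ (it is homogeneous of positive degree and vanishes at $0$), contradicting $\bar U\not\equiv0$; therefore $d\le N-1$ and $\Hh_\text{dim}(\Gamma_U)\le N-1$. This bound is optimal: $\bar U=((x\cdot e)^+,(x\cdot e)^-,0,\dots,0)$ lies in $\Geh_\text{loc}(\R^N)$ (with $f_i\equiv0$, $\mu_1=\mu_2=\Hh^{N-1}\lfloor\{x\cdot e=0\}$, and (G3) holding because $|\nabla(x\cdot e)^+|=|\nabla(x\cdot e)^-|=1$ on the interface) and has $\Gamma_{\bar U}=\{x\cdot e=0\}$, of dimension $N-1$.

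\medskip
\noindent\textbf{Part (2), $N=2$.} Now take the singular set to be $S_U$, which is relatively closed in $\Omega$ by Corollary~\ref{coro:S_U_closed} and, by the gap in Proposition~\ref{prop:N=1_or_N_geq_1+delta}, equals $\{x\in\Gamma_U:\ N(x,U,0^+)\ge3/2\}$. For upper semicontinuity, let $U_k\to\bar U$ be a blowup convergence as above and let $x_k\in S_{U_k}$, $x_k\to x$. Since Theorem~\ref{teo:Almgren's_Monotonicity_Formula} provides monotonicity constants $\tilde C,\tilde r$ that are uniform along such (iterated) blowup sequences --- the rescaled functions have locally uniformly bounded $L^\infty$ norms and vanishing nonlinear parts --- one gets $N(x_k,U_k,r)\ge e^{-\tilde C r}(N(x_k,U_k,0^+)+1)-1\ge\tfrac52 e^{-\tilde C r}-1$ for $0<r\le\tilde r$; letting $k\to\infty$ (using $E(x_k,U_k,r)\to E(x,\bar U,r)$ and $H(x_k,U_k,r)\to H(x,\bar U,r)>0$ for a.e.\ $r$) and then $r\to0^+$ gives $N(x,\bar U,0^+)\ge3/2$, i.e.\ $x\in S_{\bar U}$. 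It remains to show $d=0$, i.e.\ that $S_{\bar U}\subseteq\{0\}$ for every homogeneous $\bar U=r^\alpha G(\theta)\in\Geh_\text{loc}(\R^2)$. Because $\Gamma_{\bar U}$ has empty interior (Remark~\ref{rem:Gamma_empty_interior}), $\Gamma_{\bar U}\cap\partial B_1(0)$ is finite, so $\Gamma_{\bar U}$ is a finite union of rays issuing from the origin; at any $y_0\ne0$ on such a ray a small ball meets at most two positivity components of $\bar U$, hence the blowup of $\bar U$ at $y_0$ has at most two nontrivial components, and Remark~\ref{rem:N=2_N=1_nodalsetishyper-plane} then forces $N(y_0,\bar U,0^+)=1$, so $y_0\notin S_{\bar U}$. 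Consequently $d=0$, and the principle yields $\Hh_\text{dim}(S_U)=0$ together with the absence of accumulation points of $S_U$ in $\Omega$; in particular $S_U\cap\tilde\Omega$ is finite for every $\tilde\Omega\Subset\Omega$.

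\medskip
\noindent\textbf{Expected main difficulty.} The scaling and compactness inputs are already established in Sections~\ref{sec:preliminaries}--\ref{sec:blow_up_sequences}; the two delicate points are (a) the upper semicontinuity of $S_U$, which genuinely uses the \emph{uniformity} of the Almgren constants along blowup sequences combined with the quantitative jump of Proposition~\ref{prop:N=1_or_N_geq_1+delta}, and (b) the classification of the singular set of a homogeneous element --- here, that in $\R^2$ only the vertex is singular, for which the ``at most two components meet along a nodal ray'' observation together with Remark~\ref{rem:N=2_N=1_nodalsetishyper-plane} are the crucial ingredients (in dimension $N\ge3$ this is exactly the statement that will require the inductive argument of Section~\ref{sec:iteration_argument}). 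A minor point to keep in mind is that homogeneity of blowup limits is only guaranteed in case~(1) of Corollary~\ref{coro:U_homogeneous}, which however is the only case needed inside Federer's iteration, where one always rescales about a fixed centre.
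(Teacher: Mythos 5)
Your overall strategy is exactly the paper's: apply Federer's reduction principle to the class of (restrictions of) elements of $\Geh$, once with $\Ss(U)=\Gamma_U$ and once with $\Ss(U)=S_U$, verify (A1)--(A3) from Proposition \ref{prop:invariance_under_translation}, Theorem \ref{teo:blow_up_convergence}, Corollary \ref{coro:U_homogeneous}(1) and the semicontinuity of the Almgren limit, and then bound the dimension $d$ in \eqref{eq:FRP_definition_of_d}. Part (1) and the upper semicontinuity argument for $S_U$ coincide with the paper's proof (the explicit two--plane example showing sharpness is a harmless extra). Where you genuinely diverge is the computation of $d$ for part (2): you try to show directly that $S_{\bar U}\subseteq\{0\}$ for every homogeneous $\bar U\in\Geh_\text{loc}(\R^2)$, whereas the paper invokes the ``moreover'' clause of Theorem \ref{teo:FRP} to produce a $V$ with $S_V=\R^{N-1}\times\{0\}$, blows up once more at the origin to land on a harmonic homogeneous function with $\Gamma_{\bar U}=\R^{N-1}\times\{0\}$, and concludes $\alpha=1$ by Hopf's lemma --- an argument deliberately written so as to work in every dimension once the closedness of $S_U$ is known, which is why the paper can recycle it verbatim in Section \ref{sec:iteration_argument}.

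Your route has a genuine gap at its first step: ``because $\Gamma_{\bar U}$ has empty interior, $\Gamma_{\bar U}\cap\partial B_1(0)$ is finite'' is not a valid implication (a closed subset of $S^1$ with empty interior can be a Cantor set). The finiteness is true, but it requires structure you have not invoked: one must first observe that a homogeneous element of $\Geh_\text{loc}(\R^2)$ is automatically harmonic on $\{\bar u_i>0\}$ (comparing the $r^{\alpha-2}$ homogeneity of $\Delta(r^\alpha g_i)$ with the bound $|f_i(x,s)|\le ds$ forces $f_i(x,\bar u_i)=0$ there --- or, as the paper does, one blows up once more at the origin to reduce to the harmonic case), and then run the eigenfunction argument of Proposition \ref{prop:N=1_or_N_geq_1+delta}: each connected component of $\{G>0\}$ is an arc on which $g_i$ is a positive first eigenfunction with eigenvalue $\alpha^2$, hence has length $\pi/\alpha$, so there are finitely many arcs and the complement, having empty interior, is finite. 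A second, smaller, under-justified step is the passage ``a small ball around $y_0$ meets at most two positivity components, hence the blowup at $y_0$ has at most two nontrivial components'': connected components can split in the limit, so you need either the Hausdorff convergence of nodal sets (as in Lemma \ref{lemma:flatness_condition}) or the observation that $\Gamma_{\bar V}$ must be contained in the blowup of the single ray through $y_0$, i.e.\ in a line, which is incompatible with the three-or-more-rays picture that Remark \ref{rem:N=2_N=1_nodalsetishyper-plane} attaches to $N(y_0,\bar U,0^+)>1$. Both gaps are repairable, but as written the key step $d=0$ is not proved.
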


For the moment the second statement holds only for $N=2$ because of the dimension restriction in Proposition \ref{prop:N=1_or_N_geq_1+delta} (which provides the closedness of $S_U$). As we said before we shall extend ahead these results to any dimension greater than or equal to two.

The rest of this section is devoted to the proof of this result. The idea is to apply a version of the so called Federer's Reduction Principle, which we now state.

\begin{teo}\label{teo:FRP}
Let $\mathcal{F}\subseteq (L^\infty_\text{loc}(\R^N))^h$, and define, for any given $U\in\Feh,\ x_0\in\R^N$ and $t>0$, the rescaled and translated function $$U_{x_0,t}:=U(x_0+t\cdot).$$
We say that $U_n\rightarrow U$ in $\Feh$ iff $U_n\rightarrow U$ uniformly on every compact set of $\R^N$.
Assume that $\mathcal{F}$ satisfies the following conditions:
\begin{itemize}
\item[(A1)] (Closure under rescaling, translation and normalization)
Given any $|x_0|\leq 1-t, 0<t<1$, $\rho>0$ and $U\in \Feh$, we have that also $\rho\cdot U_{x_0,t}\in \Feh$.
\item[(A2)] (Existence of a homogeneous ``blow--up'')
Given $|x_0|<1, t_k\downarrow 0$ and $U \in \Feh$, there exists a sequence $\rho_k\in(0,+\infty)$, a real number $\alpha\geq 0$ and a  function $\bar U\in \Feh$ homogeneous of degree\footnote{That is, $\bar U(tx)=t^\alpha U(x)$ for every $t>0$.} $\alpha$ such that, if we define $U_k(x)=U(x_0+t_k x)/\rho_k$, then
$$U_k\rightarrow \bar U \qquad {\rm in }\ \Feh,\qquad \qquad \text{ up to a subsequence}.$$
\item[(A3)] (Singular Set hypotheses)
There exists a map $\Ss:\Feh\rightarrow \Ceh$ (where $\Ceh:=\{A\subset \R^N:\ A\cap B_1(0) \text{ is relatively closed in } B_1(0)\}$) such that
\begin{itemize}
 \item [(i)]  Given $|x_0|\leq 1-t$, $0<t<1$ and $\rho>0$, it holds $$\Ss(\rho\cdot U_{x_0,t})=(\Ss(U))_{x_0,t}:=\frac{\Ss(U)-x_0}{t}.$$
 \item[(ii)] Given $|x_0|<1$, $t_k\downarrow 0$ and $U,\bar U\in \Feh$ such that there exists $\rho_k>0$ satisfying $U_k:=\rho_k U_{x_0,t_k}\rightarrow \bar U$ in $\Feh$, the following ``continuity'' property holds:
$$\forall \varepsilon>0\ \exists k(\epsilon)>0:\ k\geq k(\varepsilon) \Rightarrow \Ss(U_k)\cap B_1(0)\subseteq \{x\in \R^N:\ \text{dist}(x,\Ss(\bar U))<\varepsilon\}.$$
\end{itemize}
\end{itemize}
Then, if we define
\begin{multline}\label{eq:FRP_definition_of_d}
d=\max \left\{ {\rm dim }\ L:\ L \text{ is a vector subspace of } \R^N \text{ and there exist } U\in \Feh \text{ and }\a\geq0 \right.\\
\left. \text{ such that } \Ss(U)\neq \emptyset \text{ and } U_{y,t}=t^\a U\ \forall y\in L,\ t>0 \right\} ,
\end{multline}
either $\Ss(U)\cap B_1(0)=\emptyset$ for every $U\in \Feh$, or else $\Hh_{\rm dim}(\Ss(U)\cap B_1(0))\leq d$ for every $U\in\Feh$. Moreover in the latter case there exist a function $V\in \Feh$, a d-dimensional subspace $L\leq \R^N$ and a real number $\a\geq0$ such that
\begin{equation*}\label{Psi_invariant_over_L}
V_{y,t}=t^\a V \qquad \forall y\in L, \ t>0, \qquad \quad \text{ and } \qquad \quad \Ss(V)\cap B_1(0)=L\cap B_1(0).
\end{equation*}
If $d=0$ then $\Ss(U)\cap B_\rho(0)$ is a finite set for each $U\in \Feh$ and $0<\rho<1$.
\end{teo}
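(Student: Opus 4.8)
The plan is to run the classical Federer dimension--reduction scheme, using (A1)--(A3) as the only input. Throughout, $\Hh^s_\infty$ denotes the $s$--dimensional Hausdorff content; it is monotone, satisfies $\Hh^s_\infty(\l A)=\l^s\Hh^s_\infty(A)$ and $\Hh^s_\infty(A+x)=\Hh^s_\infty(A)$, and vanishes exactly when $\Hh^s$ does, so that $\Hh_\text{dim}(A)=\sup\{s:\Hh^s_\infty(A)>0\}$. Two standard analytic facts will be invoked: (i) (density, via Frostman's lemma) if $\Hh^s(A)>0$ and $B\subseteq A$ with $\Hh^s(B)>0$, then some $x_0\in B$ has $\limsup_{r\to0^+}r^{-s}\Hh^s_\infty(A\cap B_r(x_0))>0$; (ii) (one--sided semicontinuity of content) if $\bar A\cap B_1(0)$ is relatively closed with $\Hh^s_\infty(\bar A\cap B_1(0))=0$ and sets $A_k$ satisfy $A_k\cap B_1(0)\subseteq\{x:\dist(x,\bar A)<\eps_k\}$ with $\eps_k\downarrow0$, then $\Hh^s_\infty(A_k\cap B_{1/2}(0))\to0$ — proved by covering the compact set $\bar A\cap\bar B_{3/4}(0)$ by finitely many open sets of arbitrarily small total $\sum(\operatorname{diam})^s$.

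\noindent Suppose the non--degenerate alternative fails in the worst way, i.e.\ $\Hh_\text{dim}(\Ss(U_0)\cap B_1(0))>d$ for some $U_0\in\Feh$; fix $s$ with $d<s<\Hh_\text{dim}(\Ss(U_0)\cap B_1(0))$, so $\Hh^s_\infty(\Ss(U_0)\cap B_1(0))>0$. I would construct inductively, for $m=0,1,2,\dots$, functions $U^{(m)}\in\Feh$, exponents $\a_m\ge0$, and subspaces $W_m\subseteq\R^N$ of dimension $m$ such that $U^{(m)}$ is $\a_m$--homogeneous about the origin, invariant under translations along $W_m$, and $\Hh^s_\infty(\Ss(U^{(m)})\cap B_1(0))>0$. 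For $m=0$ one sets $W_0=\{0\}$ and blows $U_0$ up, via (A2), at a positive--upper--$s$--density point $x_0\in\Ss(U_0)\cap B_1(0)$ (fact (i)): by (A3)(i) the rescalings $U_k$ satisfy $\Ss(U_k)=t_k^{-1}(\Ss(U_0)-x_0)$, hence $\Hh^s_\infty(\Ss(U_k)\cap B_{1/2}(0))=t_k^{-s}\Hh^s_\infty(\Ss(U_0)\cap B_{t_k/2}(x_0))$ stays bounded below along a well--chosen sequence $t_k\downarrow0$, and then (A3)(ii) together with fact (ii) forces $\Hh^s_\infty(\Ss(\bar U)\cap B_1(0))>0$ for the homogeneous limit $\bar U=:U^{(0)}$. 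For the step $m\to m+1$, available whenever $m<s$ (in particular whenever $m\le d$): since $\Hh_\text{dim}(\Ss(U^{(m)})\cap B_1(0))\ge s>m$ and $W_m$ is $\Hh^s$--null, pick by fact (i) a positive--upper--$s$--density point $y_0\in(\Ss(U^{(m)})\setminus W_m)\cap B_1(0)$ and blow $U^{(m)}$ up at $y_0$ (legitimate by (A1)--(A2)). The limit $U^{(m+1)}$ is $\a_{m+1}$--homogeneous about the origin by (A2) and has $\Hh^s_\infty(\Ss(U^{(m+1)})\cap B_1(0))>0$ by the same mechanism; it inherits $W_m$--invariance from $U^{(m)}$, and the homogeneity identity $U^{(m)}\big((1+t_k\l)y_0+t_kx\big)=(1+t_k\l)^{\a_m}U^{(m)}\big(y_0+\frac{t_k}{1+t_k\l}\,x\big)$ passes to the limit to give $U^{(m+1)}(x+\l y_0)=U^{(m+1)}(x)$ for all $\l\in\R$. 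Hence $U^{(m+1)}$ is invariant along $W_{m+1}:=W_m+\R y_0$, which has dimension $m+1$ since $y_0\notin W_m$, and $(U^{(m+1)})_{y,t}=t^{\a_{m+1}}U^{(m+1)}$ for all $y\in W_{m+1}$, $t>0$. Carrying the construction up to $m=d+1$ yields $U^{(d+1)}\in\Feh$ with $\Ss(U^{(d+1)})\ne\emptyset$ and invariant (up to the factor $t^{\a_{d+1}}$) along a $(d+1)$--dimensional subspace, contradicting the maximality in \eqref{eq:FRP_definition_of_d}. So either $\Ss(U)\cap B_1(0)=\emptyset$ for every $U\in\Feh$, or $\Hh_\text{dim}(\Ss(U)\cap B_1(0))\le d$ for every $U\in\Feh$.

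\noindent In the latter case the supremum defining $d$ is attained (the admissible family of subspaces contains $\{0\}$, realized by a homogeneous blow--up of any element whose singular set meets $B_1(0)$, and is bounded in dimension by $N$), so one picks $V\in\Feh$, a $d$--dimensional subspace $L$ and $\a\ge0$ with $V_{y,t}=t^\a V$ for all $y\in L$, $t>0$, and $\Ss(V)\ne\emptyset$. Taking $y=0$ makes $V$ homogeneous about the origin, so $\Ss(V)$ is a cone; taking $y\in L$ makes $V$ invariant under $L$, so $\Ss(V)=L+(\Ss(V)\cap L^\perp)$ with $\Ss(V)\cap L^\perp$ a non--empty cone in $L^\perp$ containing $0$. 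Were it larger than $\{0\}$ it would contain a half--line $\R^+ z$, $z\in L^\perp\setminus\{0\}$, forcing $\Ss(V)\supseteq L+\R^+ z$ and $\Hh_\text{dim}(\Ss(V)\cap B_1(0))\ge d+1$ — impossible — whence $\Ss(V)\cap B_1(0)=L\cap B_1(0)$, the ``moreover''. Finally, if $d=0$ and $\Ss(U)\cap B_\rho(0)$ were infinite for some $U$ and $0<\rho<1$, then $\Ss(U)\cap\bar B_\rho(0)$ (compact, by relative closedness) has an accumulation point $x_0\in B_1(0)$; blowing $U$ up at $x_0$ along scales $t_k=2|x_{j_k}-x_0|$ for $x_{j_k}\in\Ss(U)\setminus\{x_0\}$, $x_{j_k}\to x_0$, and using (A3)(i)--(ii), one gets a blow--up whose singular set contains a nonzero point, hence (being a cone) a half--line, so its dimension is $\ge1>0=d$, a contradiction; thus $\Ss(U)\cap B_\rho(0)$ is finite.

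\noindent The hard part is the only genuinely analytic ingredient, namely fact (ii) and its interplay with the blow--up: axiom (A3)(ii) supplies merely the one--sided inclusion $\Ss(U_k)\cap B_1(0)\subseteq\{\dist(\cdot,\Ss(\bar U))<\eps\}$, so transferring a lower bound on Hausdorff content from the rescaled singular sets to the limit requires both the finite--cover argument and the careful choice of blow--up scales $t_k$ (dictated by the density--point selection) ensuring that a fixed fraction of the content of $\Ss(U_k)$ survives inside $B_{1/2}(0)$. Everything else — the cone structure of the singular set of a homogeneous element, the gain of one invariance direction at each blow--up, and the dimension count producing the contradiction after $d+1$ iterations — is then formal manipulation of (A1)--(A3).
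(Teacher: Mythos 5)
The paper does not actually prove Theorem \ref{teo:FRP}: it imports the statement from the literature, citing Simon's Appendix A and Chen's generalizations, so there is no in-paper argument to compare yours against. What you have written is, in substance, exactly the standard Federer--Almgren dimension-reduction proof from those sources: the density-point selection via $\Hh^s_\infty$, the one-sided upper semicontinuity of content under the inclusion furnished by (A3)(ii), the gain of one translation-invariant direction per blow-up, the contradiction with the maximality of $d$ after $d+1$ iterations, and the cone argument for the ``moreover'' and $d=0$ clauses. All of these steps are correctly assembled, and your identification of fact (ii) plus the choice of scales $t_k$ as the only genuinely analytic ingredient is accurate.

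One point deserves a caveat. When you pass to the limit in the identity $U_k(x+\l y_0)=(1+\l t_k)^{\a_m}U_k\bigl(x/(1+\l t_k)\bigr)$, you need $U_k\bigl(x/(1+\l t_k)\bigr)\to \bar U(x)$, which requires estimating $\bigl|\bar U\bigl(x/(1+\l t_k)\bigr)-\bar U(x)\bigr|$; since $\Feh$ is only assumed to sit in $(L^\infty_\text{loc}(\R^N))^h$, continuity of $\bar U$ is not automatic from locally uniform convergence of possibly discontinuous functions, so strictly speaking the abstract statement needs elements of $\Feh$ to be continuous (or an equicontinuity hypothesis) for this step. This is exactly how the paper itself argues in its Lemma \ref{lemma:V_does_not_depend_on_x_N}, where the $C^{0,\alpha}_\text{loc}$ convergence supplies the missing continuity, and in the paper's application the class $\Feh$ consists of locally Lipschitz functions, so the gap is harmless there; but it is worth stating the continuity requirement explicitly if you want the theorem at the level of generality you quote. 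Similarly, your fact (i) should be quoted in its content form (for $\Hh^s_\infty$-a.e.\ point of a set of positive $\Hh^s_\infty$ content the upper content density is bounded below by a dimensional constant), since $\Hh^s(\Ss(U)\cap B_1(0))$ may be infinite; this changes nothing in the structure of the argument.
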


Up to our knowledge, this principle (due to Federer) appeared in this form for the first time in the book by Simon \cite[Appendix A]{Simon}. The version we present here can be seen as a particular case of a generalization made by Chen (see \cite[Theorem 8.5]{Chen_1} and \cite[Proposition 4.5]{Chen_2}).

\begin{proof}[Proof of Theorem \ref{teo:Hausdorff_dim_estimates}]
A first observation is that we only need to prove that the Hausdorff dimension estimates of the theorem hold true for the sets $\Gamma_U\cap B_1(0)$ and $S_U\cap B_1(0)$ whenever $U\in \Geh(\Omega)$ with $B_2(0)\Subset \Omega$. In fact, if we prove so, then we obtain that for any given $\Omega$ and $U\in\Geh(\Omega)$ it holds $\Hh_\text{dim}(\Gamma_U\cap K)\leq N-1$, $\Hh_\text{dim}(S_U\cap K)\leq N-2$ for every $K\Subset \Omega$ (because rescaling a function does not change the Hausdorff dimension of its nodal and singular sets). Being this true the theorem follows because a countable union of sets with Hausdorff dimension less than or equal to some $n\in\R^+_0$ also has Hausdorff dimension less than or equal to $n$.

Thus we apply the Federer's Reduction Principle to the following class of functions
$$
\Feh=\{U\in \left(L^\infty_{\rm loc}(\R^N)\right)^h:\ \text{there exists some domain } \ \Omega \text{ such that } B_2(0)\Subset\Omega \text{ and } U_{|_\Omega}\in \Geh(\Omega)\}.
$$
Let us start by checking (A1) and (A2). Hypothesis (A1) is immediately satisfied by Proposition \ref{prop:invariance_under_translation}. Moreover, let $|x_0|<1$, $t_k\downarrow 0$ and $U\in \Feh$, and choose $\rho_k=\| U(x_0+t_k x) \|_{L^2(\partial B_1(0))}$. Theorem \ref{teo:blow_up_convergence} and Corollary \ref{coro:U_homogeneous} (case 1) yield the existence of $\bar U\in \Feh$ such that (up to a subsequence) $U_k\rightarrow \bar U$ in $\Feh$ and $\bar U$ is a homogeneous function of degree $\alpha=N(x_0,U,0^+)\geq 0$. Hence also (A2) holds. Next we choose the map $\Ss$ according to our needs.

1. (dimension estimate of the nodal sets in arbitrary dimensions) We want to prove that $\Hh_{\rm dim}(\Gamma_U\cap B_1(0))\leq N-1$ whenever $U\in\Feh$. Define $\Ss: \Feh\rightarrow \Ceh$ by $\Ss(U)=\Gamma_U$ ($\Gamma_U\cap B_1(0)$ is obviously closed in $B_1(0)$ by the continuity of $U$). It is quite straightforward to check hypothesis (A3)-(i), and the local uniform convergence considered in $\Feh$ clearly yields (A3)-(ii). Therefore, in order to end the proof in this case the only thing left to prove is that the integer $d$ associated to $\Ss$ (defined in \eqref{eq:FRP_definition_of_d}) is less than or equal to $N-1$. Suppose by contradiction that $d=N$; then this would imply the existence of $V\in \Feh$ with $\Ss(V)=\R^N$, {\it i.e}., $V\equiv 0$, which contradicts the definition of $\Geh$. Thus $d\leq N-1$.

2. (dimension estimate of the singular sets in the case $N=2$) This is the most delicate case. As we said before, the restriction of $N$ is only due to Proposition \ref{prop:N=1_or_N_geq_1+delta}. As we shall see, the rest of the argument does not depend on the chosen dimension; for this reason, and since moreover we will prove the closedness of $S_U$ for any dimension $N\geq 2$ in Section \ref{sec:iteration_argument}, we decide to keep $N$ in the notations. We define $\Ss: \Feh\rightarrow \Ceh$ by $\Ss(U)=S_U$ (which belongs to $\Ceh$ by Corollary \ref{coro:S_U_closed}). The map satisfies (A3)-(i) thanks to identity \eqref{eq:almgren_identities2}, more precisely
$$
x\in \Ss(U_{x_0,t}/\rho)\Leftrightarrow N(x,U_{x_0,t}/\rho,0^+)>1\Leftrightarrow N(x_0+t x,U,0^+)>1\Leftrightarrow x_0+t x\in \Ss(U).
$$

As for (A3)-(ii), take $U_k,U\in \Feh$ as stated. Then in particular $U_k\rightarrow U$ uniformly in $B_2(0)$ and by arguing as in the proof of Lemma \ref{lemma:blow_up_convergence_7} it is easy to obtain strong convergence in $H^1(B_{3/2}(0))$. Suppose now that (A3)-(ii) does not hold; then there exists a sequence $x_k\in B_1(0)$ ($x_k\rightarrow x$, up to a subsequence, for some $x$) and an $\bar \varepsilon>0$ such that $N(x_k,U_k,0^+)\geq 1+\delta$ and $\text{dist}(x_k,\Ss(U))\geq \bar\varepsilon $. But then for small $r$ we obtain (as in the proof of Corollary \ref{coro:N_upper_semi_continuous})
$$
N(x_k,U_k,r)\geq (2+\delta) e^{-Cr}-1,
$$
and hence (since $N(x_k,U_k,r)\rightarrow N(x,U,r)$ in $k$ for small $r$) $N(x,U,0^+)\geq 1+\delta $, a contradiction.

 Finally let us prove that $d\leq N-2$. If $d=N-1$ then we would have the existence of a function $V$, homogeneous with respect  to every point in\footnote{For some $\alpha>0$ we have $V(y+\lambda x)=\lambda^\alpha V(x)$ for every $y\in \R^{N-1}\times \{0\}$, $x\in \R^N$.} $\R^{N-1}\times\{0\}$
  such that $S_V=\R^{N-1}\times\{0\}$. Now, if we take a usual blowup sequence centered at $x_0=0$ ($V(t_k x)/\rho_k$), we obtain at the limit a function $\bar U=r^\alpha G(\theta)\in \Geh_\text{loc}(\R^N)$ with $\alpha=N(x_0,V,0^+)>1$, harmonic in $\R^N\setminus \Gamma_{\bar U}$ such that $\bar U(y+\lambda x) = \lambda^\alpha \bar U(x)$ whenever $y\in \R^{N-1}\times \{0\}$, $x\in \R^N$. We prove that $\Gamma_{\bar U}=\R^{N-1}\times\{0\}$, which leads to a contradiction since Hopf's Lemma implies $\alpha=1$. Since $\bar U(x)=\lim V(t_k x)/\rho_k$ and $\Gamma_V=\R^{N-1}\times\{0\}$, it is obvious that  $\R^{N-1}\times\{0\}\subseteq \Gamma_{\bar U}$. If there were $y\in \Gamma_{\bar U}\setminus (\R^{N-1}\times\{0\})$, then since $\bar U$ is homogeneous with respect to every point in $\R^{N-1}\times\{0\}$, we would have that either $\R^{N-1}\times[0,+\infty)$ or $\R^{N-1}\times(-\infty,0]$ would be contained in $\Gamma_{\bar U}$, contradicting Remark \ref{rem:Gamma_empty_interior}.  \end{proof}

\begin{rem}\label{rem:what_do_we_need_to_have_hausdorff_estimates}
The proof of Theorem \ref{teo:Hausdorff_dim_estimates}-2 would hold in arbitrary dimensions provided that for every $N\geq 2$ there exists an universal constant $\delta_N>1$ such that either $N(x_0,U,0^+)=1$ or $N(x_0U,0^+)\geq \delta_N$, whenever $U\in \Geh(\Omega)$ and $x_0\in \Gamma_U$. A careful examination of the proof of Proposition \ref{prop:N=1_or_N_geq_1+delta} shows that the latter statement is equivalent to the following one:
\begin{itemize}
\item for every $\bar U=r^\alpha G(\theta)\in \Geh_\text{loc}(\R^N)$ with $\Delta \bar U=0$ in $\{\bar U>0\}$, either $\alpha=1$ or $\alpha\geq 1+\delta_N$.
\end{itemize}
\end{rem}


\section{Regularity results under a flatness-type assumption}\label{sec:regularity_of_regular_free_boundary}

This section is devoted to the proof of the following auxiliary result.

\begin{teo}\label{teo:Sigma_U_smooth}
Let $\Omega$ be a domain in $\R^N$ with $N\geq 2$. Fix $U\in \Geh(\Omega)$ and let $\Gamma^\star$ be a relatively open subset of $\Gamma_U$ such that the following property holds:
$$
(P)
\begin{array}{l}
\text{For any } x_0\in \Gamma^\star \text{ take } x_k\rightarrow x_0,\ t_k\downarrow 0, \text{and } \bar U\in \Geh_\text{loc}(\R^N)  \text{ such that } \\
 \bar U=\lim_k U(x_k+t_k x)/\rho_k  \text{ with } \rho_k=\|U(x_k+t_k \cdot) \|_{L^2(\partial B_1(0))}.\\
\text{Then } \Gamma_{\bar U} \text{ is a hyper-plane passing through the origin. }
\end{array}
$$
Then $\Gamma^\star$ is a $C^{1,\alpha}$ hyper-surface for every $0<\alpha<1$ and for every $x_0\in \Gamma^\star$
\begin{equation}\label{eq:reflection_principle_section5}
\lim_{x\rightarrow x_0^+}|\nabla U(x)|= \lim_{x\rightarrow x_0^-} |\nabla U(x)|\neq 0,
\end{equation}
where the limits represent an approximation to $x_0$ coming from opposite sides of the hyper-surface.
\end{teo}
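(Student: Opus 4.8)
The plan is to exploit property $(P)$ together with the classification of blowup limits from Corollary \ref{coro:U_homogeneous} to show that near each point of $\Gamma^\star$ the configuration $U$ is, up to rescaling, close to a one-dimensional profile of the form $(a_1(x\cdot\nu)^+, a_2(x\cdot\nu)^-, 0,\dots,0)$ with $a_1=a_2$, and then to upgrade this to a Schauder-type regularity statement for the interface. First I would fix $x_0\in\Gamma^\star$ and take any blowup sequence $U_k(x)=U(x_k+t_kx)/\rho_k$ with $x_k\to x_0$ and $\rho_k=\|U(x_k+t_k\cdot)\|_{L^2(\partial B_1(0))}$. By Theorem \ref{teo:blow_up_convergence} a subsequence converges in $C^{0,\alpha}_\text{loc}\cap H^1_\text{loc}$ to some $\bar U\in\Geh_\text{loc}(\R^N)$ solving $-\Delta\bar u_i=-\bar\mu_i$ with property (G3) and $F\equiv 0$. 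Property $(P)$ forces $\Gamma_{\bar U}$ to be a hyperplane through the origin; combined with the fact that each $\bar u_i$ is harmonic on $\{\bar u_i>0\}$, that $\{\bar U>0\}$ is an open set whose boundary is that hyperplane, and that $\Gamma_{\bar U}$ has empty interior (Remark \ref{rem:Gamma_empty_interior}), it follows that only two components of $\bar U$ can be nontrivial, one on each halfspace, and each must be a nonnegative harmonic function on a halfspace vanishing on its boundary, hence linear: $\bar U=(a_1(x\cdot\nu)^+,a_2(x\cdot\nu)^-,0,\dots,0)$ after relabeling. The weak reflection property (G3) for $\bar U$ (equivalently, that the last two boundary integrals in \eqref{eq:derivative_of_E_heuristics} cancel, i.e. $|\nabla\bar u_1|=|\nabla\bar u_2|$ on $\Gamma_{\bar U}$) then gives $a_1=a_2=:a>0$. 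The normalization $\|\bar U\|_{L^2(\partial B_1(0))}=1$ pins down $a$, so the blowup limit is \emph{unique} (independent of the sequence), and in particular $N(x_0,U,0^+)=1$ for every $x_0\in\Gamma^\star$, so $\Gamma^\star\subseteq\Sigma_U$.

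Next I would promote this pointwise blowup information to a uniform ``flatness'' estimate. Since the blowup limit is unique and equals a fixed linear profile, a standard compactness/contradiction argument shows: for every $\eps>0$ there are $\theta,\,\bar r>0$ (uniform for $x_0$ in a compact subset of $\Gamma^\star$) such that in $B_r(x_0)$ the normalized function is $\eps$-close in $C^{0,\alpha}$ to $a\,(x\cdot\nu_{x_0,r})^\pm$ for some unit vector $\nu_{x_0,r}$, and the energy $N(x_0,U,r)$ is close to $1$ for all $r\le\bar r$. The key point is then to obtain a \emph{decay/Morrey-type improvement of flatness}: controlling the rotation of the approximating hyperplane as the scale decreases, i.e. showing $|\nu_{x_0,r}-\nu_{x_0,r'}|\le C\,r^\alpha$ as $r,r'\to0$. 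This is where the bulk of the work lies. I expect to obtain it by combining (i) the almost-monotonicity of the Almgren quotient $N(x_0,U,\cdot)$ from Theorem \ref{teo:Almgren's_Monotonicity_Formula}, which together with $N(x_0,U,0^+)=1$ gives a quantitative rate $N(x_0,U,r)-1 = O(r^{2\gamma})$ for small $r$ via the differential inequality \eqref{eq:derivative_of_N}, with (ii) an Almgren-type argument bounding the ``angular derivative'' of $U$ on spheres by the defect $N(r)-1$, yielding $\int_{\partial B_r}|\nabla_\theta U|^2\lesssim (N(r)-1)H(r)$; integrating this controls how much the profile can rotate from scale $r$ to scale $r/2$, and summing a geometric series gives the Hölder modulus for $r\mapsto\nu_{x_0,r}$. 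An alternative, more robust route is to use the strong $H^1$ convergence of blowups plus the uniqueness of the limit to run a Weiss-type monotonicity argument: the Weiss energy $W(x_0,U,r)=E(x_0,U,r)-N(x_0,U,0^+)H(x_0,U,r)/r^{\,?}$ (suitably adapted, modulo the lower-order $F$-terms which are $O(r^2)$) is almost-monotone and its decay controls the $L^2$-distance of the rescaling to the cone of linear profiles, hence the rotation rate; I would likely present whichever of these is cleanest given the estimates already established in Sections \ref{sec:preliminaries}--\ref{sec:blow_up_sequences}.

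Once the unit normal $\nu(x_0):=\lim_{r\to0}\nu_{x_0,r}$ is shown to exist and to be $C^{0,\alpha}$ in $x_0$ (again by a uniform version of the above, using that the constants are locally uniform on $\Gamma^\star$), it follows by a now-classical argument that $\Gamma^\star$ is locally the graph of a $C^{1,\alpha}$ function: near $x_0$, the set $\Gamma_U$ is trapped in a sequence of shrinking cones whose axes converge with the stated rate, which is precisely the statement that $\Gamma^\star$ is a $C^{1,\alpha}$ hypersurface with normal field $\nu$. Finally, to get the reflection law \eqref{eq:reflection_principle_section5} and nondegeneracy, I would note that $U$ has a first-order expansion at each $x_0\in\Gamma^\star$: writing the two nontrivial local components as $u_1,u_2$, the uniqueness of the (nonzero) blowup limit $a(x\cdot\nu)^\pm$ together with the $C^{1,\alpha}$ regularity of $\Gamma^\star$ and interior Schauder estimates for $-\Delta u_i=f_i(x,u_i)$ on each side up to the (now regular) interface give $u_1(x)=a\,(\,(x-x_0)\cdot\nu\,)^+ + o(|x-x_0|)$ and $u_2(x)=a\,(\,(x-x_0)\cdot\nu\,)^- + o(|x-x_0|)$ with the \emph{same} $a=a(x_0)>0$. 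Hence $\lim_{x\to x_0^+}|\nabla U(x)|=\lim_{x\to x_0^-}|\nabla U(x)|=a(x_0)\neq 0$, which is \eqref{eq:reflection_principle_section5}; in particular the free boundary is nondegenerate on $\Gamma^\star$, as claimed in the introduction. The main obstacle throughout is the quantitative improvement-of-flatness step: without the minimality used in \cite{CL}, one must extract the decay rate purely from the monotonicity formula and the (G3)-reflection identity, which requires some care in handling the lower-order terms coming from $F$ and the measures $\mu_i$ (all of which are, fortunately, $O(r^2)$-small after rescaling).
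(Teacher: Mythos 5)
Your first step --- classifying the blowup limits as $\bigl(a_1(x\cdot\nu)^+,a_2(x\cdot\nu)^-,0,\dots,0\bigr)$ and using (G3) to force $a_1=a_2$ --- is sound and is essentially the paper's Lemma \ref{lemma:reflection_principle}. The problem is the second step, on which, as you say yourself, the bulk of the work lies. The claimed rate $N(x_0,U,r)-1=O(r^{2\gamma})$ does \emph{not} follow from the differential inequality \eqref{eq:derivative_of_N}: that inequality is one-sided, saying only that $e^{\tilde C r}(N(x_0,U,r)+1)$ is nondecreasing, hence $N(x_0,U,r)\geq 2e^{-\tilde Cr}-1$; it gives no upper bound on $N(x_0,U,r)-1$, which a priori may tend to $0$ arbitrarily slowly. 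Without such a rate the angular-energy estimate cannot be summed over dyadic scales, and the H\"older modulus for $r\mapsto\nu_{x_0,r}$ does not close. The Weiss-monotonicity alternative is only named, not carried out, and in this non-minimizing setting it is precisely the kind of machinery the paper is at pains to avoid (compare Remark \ref{rem:vector_case}, where an improvement-of-flatness iteration \`a la Caffarelli--Lin, together with the boundary regularity theory of \cite{JK,KT}, is invoked only for the vector-valued case where the scalar argument fails). You also skip the local separation step: to speak of two functions $u,v$ on the two sides of the interface in a whole neighborhood of $x_0$ (not just in the blowup limit) one needs to know that $B_{R_0}(x_0)\setminus\Gamma_U$ has exactly two connected components, which the paper derives from a Reifenberg-flatness property (Lemma \ref{lemma:flatness_condition} and Proposition \ref{prop:local_separation_property}).

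The paper's route bypasses improvement of flatness altogether. After the local separation, one writes $-\Delta u=f(x,u)-\lambda$ and $-\Delta v=g(x,v)-\mu$ with $\lambda,\mu$ concentrated on $\Gamma_U$, and uses the blowup reflection principle at \emph{every} point $y$ of the local nodal set to show $\lim_{r\to0}\lambda(\bar B_r(y))/\mu(\bar B_r(y))=1$; Radon--Nikodym differentiation then gives $\lambda=\mu$ as measures. Consequently $w=u-v$ solves the single semilinear equation $-\Delta w=f(x,w^+)-g(x,w^-)$ across the interface, so $w\in C^{1,\alpha}$ by standard Schauder theory, $\nabla w(x_0)\neq0$ because the blowup of $w$ is homogeneous of degree one and the blowups converge in $C^{1,\alpha}$, and the implicit function theorem applied to $\Gamma_U=\{w=0\}$ yields both the $C^{1,\alpha}$ regularity and the reflection law \eqref{eq:reflection_principle_section5}. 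You should either adopt this measure-identification argument or supply a genuine two-sided decay estimate for $N(x_0,U,r)-1$; as written, the proposal has a gap at its central step.
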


\begin{rem}\label{rem:U_holds_in_dim_2}
In dimension $N=2$, for every $U\in \Geh(\Omega)$, property (P) holds for $\Gamma^\star:=\Sigma_U$, as previously observed in Remark \ref{rem:N=2_N=1_nodalsetishyper-plane}.
\end{rem}

In general, Theorem \ref{teo:blow_up_convergence} yields that every blowup limit $\bar U$ belongs to $\Geh_\text{loc}(\R^N)$ and that $-\Delta \bar u_i=\bar \mu_i$, with $\bar \mu_i\in \Mah_\text{loc}(\R^N)$ non negative and concentrated on $\Gamma_{\bar U}$. Property (P) says that such nodal sets are ``flat'', whenever the blowup limit is taken at points of $\Gamma^*$. Hence Theorem \ref{teo:Sigma_U_smooth} states that ``locally flat'' points of the free boundary $\Gamma_U$ (for $U\in \Geh(\Omega)$) are regular and that a reflection law holds. The previous theorem will be an important tool in the proof of Theorem \ref{teo:main_result} (this will became clear in Section \ref{sec:iteration_argument} ahead): we will be able to apply this result to $\Sigma_U$ in any dimension $N\geq 2$.

The strategy of the proof of Theorem \ref{teo:Sigma_U_smooth} is as follows: property (P) will provide a local separation property (Proposition \ref{prop:local_separation_property}). This, together with the fact that $\bar U\in \Geh_\text{loc}(\R^N)$ will allow us the use of a reflection principle (Lemma \ref{lemma:reflection_principle}), which will in turn imply that in a small neighborhood of each point in $\Gamma^\star$ a certain equation can be solved and has a $C^{1,\alpha}$ solution (Theorem \ref{teo:global_equation}). The nodal set of this solution will be equal to $\Gamma_U$, and the final step will be to establish that its gradient is non zero on $\Gamma_U$.

From now on we fix $U\in \Geh(\Omega)$ with $\Omega\subseteq \R^N$ ($N\geq 2$) and let $\Gamma^\star$ be a relatively open subset  of $\Gamma_U$ satisfying assumption (P). Take an open set $\tilde \Omega \Subset \Omega$ such that $\Gamma_U\cap \overline{\tilde \Omega} = \Gamma^\star\cap \overline{\tilde\Omega}$, that is, all the nodal points of $U$ in the closure of $\tilde \Omega$ belong to $\Gamma^\star$. In the following lemma we prove that $\Gamma_U\cap \tilde \Omega$ verifies the so called $(N-1)$--dimensional $\delta$--Reifenberg flat condition for every $0<\delta<1$.

\begin{lemma}\label{lemma:flatness_condition}
Within the previous framework, for any given $0<\delta<1$ there exists $R>0$ such that for every $x\in \Gamma^\star\cap \tilde \Omega=\Gamma_U\cap\tilde \Omega$ and $0<r<R$ there exists an hyper-plane $H=H_{x,r}$ containing $x$ such that \footnote{Here $d_\Hh(A,B):=\max\{\sup_{a\in A}\text{dist}(a,B),\sup_{b\in B}\text{dist}(A,b)\}$ denotes the Hausdorff distance. Notice that $d_\Hh(A,B)\leq \delta$ if and only if $A\subseteq N_\delta(B)$ and $B\subseteq N_\delta (A)$, where $N_\delta(\cdot)$ is the closed $\delta$--neighborhood of a set.}
\begin{equation}\label{eq:flatness_condition}
d_\Hh (\Gamma_{U}\cap B_r(x),H\cap B_r(x))\leq \delta r.
\end{equation}
\end{lemma}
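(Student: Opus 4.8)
The plan is to argue by contradiction through a blowup/compactness scheme, which is the natural device to transform the qualitative hypothesis (P) into a quantitative flatness estimate at small scales.

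\textbf{Setting up the contradiction.} Suppose the claim fails for some $\delta\in(0,1)$. Then there are points $x_k\in\Gamma^\star\cap\tilde\Omega=\Gamma_U\cap\tilde\Omega$ and radii $r_k\downarrow 0$ such that for \emph{every} hyperplane $H\ni x_k$ one has $d_\Hh(\Gamma_U\cap B_{r_k}(x_k),H\cap B_{r_k}(x_k))>\delta r_k$. Up to a subsequence $x_k\to x_0$; since $\Gamma_U$ is closed and $\Gamma_U\cap\overline{\tilde\Omega}=\Gamma^\star\cap\overline{\tilde\Omega}$, this forces $x_0\in\Gamma^\star$. I then form the blowup $U_k(x)=U(x_k+r_k x)/\rho_k$ with $\rho_k=\|U(x_k+r_k\cdot)\|_{L^2(\partial B_1(0))}$, so that $\|U_k\|_{L^2(\partial B_1(0))}=1$ and $U_k(0)=0$. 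By Theorem \ref{teo:blow_up_convergence}, along a further subsequence $U_k\to\bar U$ in $C^{0,\alpha}_\text{loc}(\R^N)$ and strongly in $H^1_\text{loc}(\R^N)$, with $\bar U\in\Geh_\text{loc}(\R^N)$, $\bar U\not\equiv0$ (the $L^2(\partial B_1(0))$ norm passes to the limit) and $0\in\Gamma_{\bar U}$. Since $x_k\to x_0\in\Gamma^\star$, assumption (P) applies to this very blowup and gives that $H_0:=\Gamma_{\bar U}$ is a hyperplane through the origin. Applying the failing inequality to the hyperplane $H=x_k+r_kH_0\ni x_k$ and rescaling by $r_k^{-1}$ (Hausdorff distance scales linearly) yields $d_\Hh(\Gamma_{U_k}\cap B_1(0),H_0\cap B_1(0))>\delta$ for all $k$; hence it suffices to prove $d_\Hh(\Gamma_{U_k}\cap B_1(0),H_0\cap B_1(0))\to0$, which is the desired contradiction.

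\textbf{First inclusion (nodal sets near $H_0$).} That $\sup_{y\in\Gamma_{U_k}\cap B_1(0)}\text{dist}(y,H_0)\to0$ follows at once from uniform convergence: if not, a subsequential limit $y$ of nodal points $y_k$ with $\text{dist}(y_k,H_0)\ge\varepsilon_0$ would satisfy $y\notin H_0=\Gamma_{\bar U}$, hence $\bar u_i(y)>0$ for some $i$, contradicting $u_{i,k}(y_k)=0\to\bar u_i(y)$. The opposite inclusion $\sup_{z\in H_0\cap B_1(0)}\text{dist}(z,\Gamma_{U_k})\to0$ is the crux and is a non-degeneracy statement. If it failed there would be $z_k\in H_0\cap B_1(0)$ with $\text{dist}(z_k,\Gamma_{U_k})\ge\varepsilon_0$; up to a subsequence $z_k\to z\in H_0$, so the fixed ball $B:=B_{\varepsilon_0/2}(z)$ (which lies inside the domain of $U_k$ for $k$ large, since $(\Omega-x_k)/r_k\to\R^N$) satisfies $B\cap\Gamma_{U_k}=\emptyset$, i.e. $B\subseteq\{U_k>0\}$. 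Because the components of $U_k$ have pairwise disjoint \emph{open} positivity sets and $B$ is connected, $B\subseteq\{u_{i,k}>0\}$ for a single index $i$, fixed along a further subsequence. There $\mu_{i,k}$ vanishes and $-\Delta u_{i,k}=f_{i,k}(x,u_{i,k})$ with $\|f_{i,k}(x,u_{i,k})\|_{L^\infty_\text{loc}(B)}\le d\,r_k^2\|u_{i,k}\|_{L^\infty_\text{loc}(B)}\to0$ (the $L^\infty_\text{loc}$ bound being provided by Remark \ref{rem:blow_up_convergence_3}); passing to the limit, $\bar u_i$ is harmonic and nonnegative in $B$ and vanishes at the interior point $z\in\Gamma_{\bar U}$, so $\bar u_i\equiv0$ in $B$ by the strong maximum principle. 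But $\Gamma_{\bar U}=H_0$ has empty interior (Remark \ref{rem:Gamma_empty_interior}), hence $\{\bar U>0\}$ is dense and there is $p\in B$ with $\bar u_j(p)>0$ for some $j$: if $j=i$ this contradicts $\bar u_i\equiv0$ in $B$, and if $j\ne i$ then $u_{j,k}(p)>0$ for $k$ large while $u_{i,k}(p)>0$ since $p\in B$, contradicting disjointness of supports. This contradiction establishes the second inclusion and completes the proof.

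\textbf{Main obstacle.} The delicate point is the non-degeneracy in the last paragraph: converting the soft information ``$\Gamma_{\bar U}$ is a hyperplane'' into the hard statement that $\Gamma_{U_k}$ fills a full neighbourhood of $H_0$. It is precisely here — and not through any a priori non-degeneracy assumption — that one must exploit the multi-component structure (disjoint supports) together with the fact that a hyperplane cannot bound a region where $\bar U$ is identically zero on a ball. Everything else (the blowup compactness, the easy inclusion) is routine once Theorem \ref{teo:blow_up_convergence} and hypothesis (P) are in hand.
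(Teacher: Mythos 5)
Your proposal is correct and follows essentially the same route as the paper: contradiction via a blowup at the bad points, with property (P) giving a hyperplane as the limit nodal set, the first inclusion from uniform convergence, and the second (non-degeneracy) inclusion from the observation that a ball avoiding $\Gamma_{U_k}$ would lie in a single positivity set, forcing the limit component to be harmonic, nonnegative and vanishing at an interior nodal point, hence $\bar U\equiv 0$ on that ball, contradicting the empty interior of $\Gamma_{\bar U}$. The only (harmless) difference is that you run the second inclusion as one direct contradiction on the supremum, whereas the paper first proves the pointwise version and then upgrades it by a triangle inequality.
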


\begin{proof}
Arguing by contradiction, suppose there exist $\bar \delta >0$ and subsequences $x_n\in \Gamma^\star\cap\tilde \Omega$, $r_n\rightarrow 0$ such that
\begin{equation*}
d_\Hh (\Gamma_U\cap B_{r_k}(x_n),H\cap B_{r_k}(x_k)) > \bar \delta r_k.
\end{equation*}
whenever $H$ is an hyper-plane passing through $x_k$. If we take a blowup sequence of type $U_k(x)=U(x_k+r_k x)/\rho_k$ (here we use the notations of Section \ref{sec:blow_up_sequences}), then the contradiction statement is equivalent to have
\begin{equation*}
d_\Hh(\Gamma_{U_k}\cap B_1(0),H\cap B_1(0))>\bar \delta
\end{equation*}
whenever $H$ is an hyper-plane that passes through the origin. Since, up to a subsequence, $x_k\rightarrow \bar x\in \Gamma_U\cap \overline{\tilde \Omega}=\Gamma^\star\cap \overline{\tilde \Omega}$, Theorem \ref{teo:blow_up_convergence} together with property (P) implies the existence of a blowup limit $\bar U$ whose nodal set $\Gamma_{\bar U}$ is a hyper-plane containing the origin.
Hence we obtain a contradiction once we are able to prove that
$$
d_\Hh(\Gamma_{U_k}\cap B_1(0),\Gamma_{\bar U}\cap B_1(0))\rightarrow 0.
$$
\noindent
i) For every $\varepsilon>0$ there exists $\bar k>0$ such that
$$
\Gamma_{U_k}\cap B_1(0)\subseteq N_\varepsilon(\Gamma_{\bar U}\cap B_1(0)) \qquad \text{ for every } k\geq \bar k.
$$
Were the previous inclusion not true and we would obtain the existence of $\bar \varepsilon>0$ and of a sequence $y_k\in \Gamma_{U_k}\cap B_1(0)$ such that $\text{dist}(y_k,\Gamma_{\bar U}\cap B_1(0))>\bar \varepsilon$. Up to a subsequence, $y_k\rightarrow y\in \Gamma_{\bar U}\cap \bar B_1(0)$ by the $L^\infty_\text{loc}$ convergence $U_k\rightarrow \bar U$; moreover, since $\Gamma_{\bar U}$ is a hyper-plane passing  the origin, we deduce that $\text{dist}(y,\Gamma_{\bar U}\cap B_1(0))=0$, which provides a contradiction.

\noindent
ii) For every $\varepsilon>0$ there exists $\bar k>0$ such that
\begin{equation}\label{eq:flat_cond_inclusion}
\Gamma_{\bar U}\cap B_1(0)\subseteq N_\varepsilon(\Gamma_{U_k}\cap B_1(0)) \qquad \text{ for every } k\geq \bar k.
\end{equation}
First of all we prove that given $x\in\Gamma_{\bar U}$ and $\delta>0$, $U_k$ must have a zero in $B_\delta(x)$ for large $k$. If not, by recalling that $u_{i,k}\cdot u_{j,k}\equiv 0$ whenever $i\neq j$, we would have $u_{i,k}>0$ in $B_\delta(x)$ for some $i$ and moreover $\Delta u_{i,k}=0$ and $u_{j,k}\equiv 0$ (for $j\neq i$) in such ball. This would imply $\bar u_j\equiv 0$, $\Delta \bar u_i=0$ in $B_\delta(x)$ with $x\in \Gamma_{\bar U}$, and therefore $\bar U\equiv 0$ in $B_\delta(x)$, a contradiction by Remark \ref{rem:Gamma_empty_interior}.

Now we are in condition to prove \eqref{eq:flat_cond_inclusion}. We use once again a contradiction argument: suppose the existence of $\bar \varepsilon>0$ and $y_k\in \Gamma_{\bar U}\cap B_1(0)$, $y_k\rightarrow y\in \Gamma_{\bar U}\cap \bar B_1(0)$, such that $\text{dist}(y_k,\Gamma_{U_k}\cap B_1(0))>\bar \varepsilon$. Since $\Gamma_{\bar U}$ is a hyper-plane passing trough the origin, we can take $\bar y\in \Gamma_{\bar U}\cap B_1(0)$ such that $|y-\bar y|\leq \bar \varepsilon/4$. Moreover, by making use of the result proved in the previous paragraph, we can take a sequence $\bar y_k\in \Gamma_{U_k}\cap B_1(0)$ such that $|\bar y_k-\bar y|\leq \bar \varepsilon/4$ for large $k$. But then
$$
\text{dist}(y_k,\Gamma_{\bar U}\cap B_1(0))\leq |y_k-\bar y_k|\leq +|y_k-y|+| y- \bar y|+|\bar y-\bar y_k|\leq 3\bar \varepsilon/4<\bar \varepsilon
$$for large $k$, a contradiction.
\end{proof}

With the $(N-1)$--dimensional $\delta$--Reifenberg property we are able to prove a local separation result. We quote Theorem 4.1 in \cite{reifenberg_separates} for a result in the same direction.

\begin{prop}[Local Separation Property]\label{prop:local_separation_property}
Given $x_0\in \Gamma^\star$ there exists a radius $R_0>0$ such that $B_{R_0}(x_0)\cap \Gamma^\star=B_{R_0}(x_0)\cap \Gamma_U$ and $B_{R_0}(x_0)\setminus \Gamma_U=B_{R_0}(x_0)\cap \{U>0\} $ has exactly two connected components $\Omega_1,\Omega_2$. Moreover, for sufficiently small $\delta>0$, we have that given $y\in \Gamma_U\cap B_{R_0}(x_0)$ and $0<r<R-|y|$ there exist a hyper-plane $H_{y,r}$ (passing through $y$) and a unitary vector $\nu_{y,r}$ (orthogonal to $H_{y,r}$) such that
$$
\{x+t \nu_{y,r}\in B_r(y):\ x\in H_{y,r},\ t\geq \delta r\}\subset \Omega_1, \qquad \{x-t \nu_{y,r}\in B_r(y):\ x\in H_{y,r},\ t\geq \delta r\}\subset \Omega_2.
$$
\end{prop}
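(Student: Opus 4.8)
The plan is to deduce the whole statement from the $(N-1)$--dimensional $\delta$--Reifenberg flatness of $\Gamma_U$ established in Lemma~\ref{lemma:flatness_condition}, combined with the general principle that a set with sufficiently small Reifenberg constant locally separates $\R^N$ into exactly two pieces; this is essentially \cite[Theorem~4.1]{reifenberg_separates}, whose argument I would follow.

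First I would fix $R_0$. Since $\Gamma_U$ is closed in $\Omega$ and $\Gamma^\star$ is relatively open in it, the set $\Gamma_U\setminus\Gamma^\star$ is closed in $\Omega$ and avoids $x_0$; by relative openness of $\Gamma^\star$ we may also assume $x_0\in\tilde\Omega$. I would then fix a small $\delta\in(0,1)$ (to be specified along the way), let $R=R(\delta)$ be the radius from Lemma~\ref{lemma:flatness_condition}, and choose $R_0>0$ with $\overline{B_{R_0}(x_0)}\subset\tilde\Omega$, $R_0<R$ and $R_0<\dist(x_0,\Gamma_U\setminus\Gamma^\star)$. This already gives $B_{R_0}(x_0)\cap\Gamma_U=B_{R_0}(x_0)\cap\Gamma^\star$, and for every $y\in\Gamma_U\cap B_{R_0}(x_0)$ and $0<r<R-|y|$ Lemma~\ref{lemma:flatness_condition} furnishes a hyper-plane $H_{y,r}\ni y$ with unit normal $\nu_{y,r}$ satisfying \eqref{eq:flatness_condition}. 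Writing $C^{\pm}_r(y)=\{x\in B_r(y):\ \pm\langle x-y,\nu_{y,r}\rangle>\delta r\}$, the two ``caps'' are convex (hence connected), their union is $B_r(y)\setminus N_{\delta r}(H_{y,r})$, they are disjoint from $\Gamma_U$ by the first inclusion in \eqref{eq:flatness_condition}, and by the second inclusion $\Gamma_U$ is $\delta r$--dense along $H_{y,r}\cap B_r(y)$.

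With this local picture in hand I would run the Reifenberg separation mechanism on $\Gamma_U\cap\tilde\Omega$: for $\delta$ small enough one obtains that (i) the caps $C^{+}_{R_0/2}(x_0)$ and $C^{-}_{R_0/2}(x_0)$ lie in distinct connected components of $B_{R_0}(x_0)\setminus\Gamma_U$; (ii) every point of $B_{R_0}(x_0)\setminus\Gamma_U$ is joined, within $B_{R_0}(x_0)\setminus\Gamma_U$, to one of these two caps; and (iii) $\Gamma_U\cap B_{R_0}(x_0)$ is the common topological boundary of the two resulting components $\Omega_1\supset C^{+}_{R_0/2}(x_0)$ and $\Omega_2\supset C^{-}_{R_0/2}(x_0)$. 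The mechanism is a chaining over dyadic scales: a point at distance $\sim 2^{-j}R_0$ from $\Gamma_U$ lies in some cap $C^{\pm}_{2^{-j}R_0}(z)$ with $z\in\Gamma_U$, consecutive caps at comparable scales overlap, so one builds a path up to the scale-$R_0$ caps, while the density of $\Gamma_U$ along each approximating hyper-plane prevents switching from the ``$+$'' to the ``$-$'' family. Once $\Omega_1,\Omega_2$ are identified, the remaining assertion follows: for $y\in\Gamma_U\cap B_{R_0}(x_0)$ and $r$ as above, the connected caps $C^{\pm}_r(y)$ are disjoint from $\Gamma_U$, hence each lies in $\Omega_1$ or in $\Omega_2$; they cannot lie in the same one, since by (iii) the other component would then reach $y$ only through the slab $N_{\delta r}(H_{y,r})$, and iterating this at scales $2^{-k}r$ (again via the chaining) would produce a path in $B_{R_0}(x_0)\setminus\Gamma_U$ joining $C^{+}_r(y)$ to $C^{-}_r(y)$, contradicting (i) and (ii). Re-orienting $\nu_{y,r}$ so that $C^{+}_r(y)\subset\Omega_1$ then gives $C^{-}_r(y)\subset\Omega_2$, which is exactly the pair of displayed inclusions.

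The hard part is this separation step --- turning ``$\Gamma_U$ is thin at every scale'' into ``$\Gamma_U$ disconnects''. Thinness alone (the first half of \eqref{eq:flatness_condition}) is not enough, since an arc could a priori thread through $N_{\delta r}(H_{y,r})\setminus\Gamma_U$; one genuinely needs the density of $\Gamma_U$ along the approximating hyper-planes (the second half of \eqref{eq:flatness_condition}) together with the multi-scale iteration to rule this out. That is precisely the content of the Reifenberg separation theorem, and I would invoke \cite[Theorem~4.1]{reifenberg_separates} for it, reproducing the chaining where needed.
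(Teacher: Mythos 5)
Your proposal is correct and follows essentially the same route as the paper: both deduce the separation from the $(N-1)$--dimensional $\delta$--Reifenberg flatness of Lemma \ref{lemma:flatness_condition} and then run the multiscale (dyadic) iteration of \cite[Theorem 4.1]{reifenberg_separates}, using the two-sided Hausdorff estimate so that the slabs at consecutive scales nest and the two caps never merge. The paper phrases the chaining as the construction of a continuous, locally constant sign function $\sigma:B_{R_0}(x_0)\setminus\Gamma_U\to\{-1,1\}$ extended scale by scale, which is the same mechanism you describe via overlapping caps and paths.
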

\begin{proof}
Let $s$ be such that $B_{2s}(x_0) \cap \Gamma^\star=B_{2s}(x_0)\cap \Gamma_U$ (which exists since $\Gamma^\star$ is a relatively open set in $\Gamma_U$) and fix $\delta<1/8$. With the notations of Lemma \ref{lemma:flatness_condition}, for $\tilde \Omega:= B_s(x_0)$ there exists $R>0$ such that $\Gamma_U\cap B_{s}(x_0)$ satisfies a $(\delta,R)$--Reifenberg flat condition. We show that Proposition \ref{prop:local_separation_property} holds with the choice $R_0:=\min\{R,s\}$.

Lemma \ref{lemma:flatness_condition} yields the existence of an hyper-plane $H_{x_0,R_0}$ containing the origin such that
\begin{equation}\label{eq:local_separation_property1}
d_\Hh (\Gamma_U\cap B_{R_0}(x_0),H_{x_0,R_0}\cap B_{R_0}(x_0))\leq \delta R_0.
\end{equation}
Thus the set $B_{R_0}(x_0)\setminus N_{\delta R_0}(H_{x_0,R_0})$ is made of two connected components, say $A_1$ and $A_2$, which do not intersect $\Gamma_U$. Define the function
$$
\sigma(x)=\left\{
\begin{array}{cll}
1 & \text{ if }  & x\in A_1,\\
-1 & \text{ if } & x\in A_2.
\end{array}
\right.
$$
Now take any point $x_1\in \Gamma_U\cap B_{R_0}(x_0)\subseteq N_{\delta R_0}(H_{x_0,R_0})\cap B_{R_0}(x_0)$ and consider a ball of radius $R_0/2$ centered at $x_1$. Once again by Lemma \ref{lemma:flatness_condition} we have the existence of an hyper-plane $H_{x_1,R_0/2}$ such that
$$
d_\Hh(\Gamma_U\cap B_{R_0/2}(x_1),H_{x_1,R_0/2}\cap B_{R_0/2}(x_1))\leq \delta R_0/2.
$$
This inequality together with \eqref{eq:local_separation_property1} yields that
$$
N_{\delta R_0/2}(H_{x_1,R_0/2})\cap B_{R_0/2}(x_1)\cap B_{R_0}(x_0)\subseteq N_{4\delta R_0}(H_{x_0,R_0})\cap B_{R_0}(x_0).
$$
Hence $B_{R_0}(x_0)\cap B_{R_0/2}(x_1)\setminus N_{\delta R_0/2}(H_{x_1,R_0/2})$ has exactly two connected components where one intersects $A_1$ but not $A_2$, and the other intersects $A_2$ but not $A_1$. Thus the set
$$
\left( \cup_{x_1\in \Gamma_U\cap B_{R_0}(x_0)} B_{R_0}(x_0)\cap B_{R_0/2}(x_1)\setminus N_{\delta R_0/2}(H(x_1,R_0/2))\right) \cup A_1\cup A_2
$$
has exactly 2 connected components which do not intersect $\Gamma_U$ and hence we can continuously extend (by $\pm 1$) the function $\sigma$ to this set.

Now we iterate this process: in the $k$--th step, we apply the previous reasoning to a ball of radius $R_0/2^k$ centered at a point of $\Gamma_U$. In this way we find two connected and disjoint sets $\Omega_1, \Omega_2$ such that $B_{R_0}(x_0)\setminus \Gamma_U=\Omega_1\cup \Omega_2$, $A_1\subseteq \Omega_1$, $A_2\subseteq \Omega_2$. Moreover, the function $\sigma:B_1(0)\setminus \Gamma_U\rightarrow \{-1,1\}$ defined by $\sigma(x)=1$ if $x\in \Omega_1$, $\sigma(x)=-1$ if $x\in \Omega_2$ is continuous and thus $B_{R_0}(x_0)\setminus \Gamma_U$ has exactly two connected components. In order to check the continuity, take $x\in B_{R_0}(x_0)$ such that $\text{dist}(x,\Gamma_U\cap B_{R_0}(x_0))=:\gamma>0$, let $\bar x\in \Gamma_U\cap B_{R_0}(x_0)$ be a point of minimum distance and take $k$ so large that $R_0/2^{k+1}<\gamma<R_0/2^k$; then $x\in B_{R_0/2^k}(\bar x)\setminus N_{\delta R_0/2^k}(H_{\bar x,R_0/2^k})$ and hence $\sigma$ is constant (recall the construction of this function) in a small neighborhood of $x$.
\end{proof}

From now on we fix $x_0\in \Gamma^\star$ and take $R_0>0$ as in Proposition \ref{prop:local_separation_property}. Denote by $\Omega_1,\Omega_2$ the two connected components of $B_{R_0}(x_0)\cap \{ U>0\}$ and by $u$ and $v$ the two functions amongst the components of the vector map $U$ that satisfy $B_{R_0}(x_0)\cap\{ u>0\} =\Omega_1$, $B_{R_0}(x_0)\cap \{v>0\}=\Omega_2$. Two situations may occur:
\begin{enumerate}
\item $u=u_i$ and $v=u_j$ in $B_{R_0}(x_0)$ for some $i\neq j$. In this case $u_k\equiv 0$ in $B_{R_0}(x_0)$ for $k\not\in \{i,j\}$ and $(u,v)=(u_i,u_j)\in \Geh(B_{R_0}(x_0))$.
\item $u_k\equiv 0$ for all $k\neq i$ for some $i$. In this case we take
$$
u(x)=\left\{
\begin{array}{cl}
u_i(x) & \text{ if } x\in \Omega_1\\
0 & \text{ if } x\in B_{R_0}(x_0)\setminus \Omega_1
\end{array}
\right.
\qquad
v(x)=\left\{
\begin{array}{cl}
u_i(x) & \text{ if } x\in \Omega_2\\
0 & \text{ if } x\in B_{R_0}(x_0)\setminus \Omega_2
\end{array}
\right.
$$
The next statement shows that $(u,v)\in \Geh(B_{R_0}(x_0))$ also in this situation.
\end{enumerate}

\begin{lemma}\label{lemma:ext_by_0}
Under the situation of case 2 described before we obtain $u,v\in H^1(B_{R_0}(x_0))$, $\nabla u=\nabla u_i \chi_{\Omega_1},\ \nabla v=\nabla u_i \chi_{\Omega_2}$ and the existence of non negative Radon measures $\lambda,\mu$ such that $\lambda_i=\lambda+\mu$ and
$$
\left\{
\begin{array}{l}
-\Delta u= f_i(x,u)-\lambda\\
-\Delta v=f_i(x,v)-\mu
\end{array}
\right.
\qquad
\text{ in } B_{R_0}(x_0).
$$
\end{lemma}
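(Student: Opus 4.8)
The plan is to produce $u$ and $v$ by ``extending by zero across $\Gamma_U$'' and then to split the equation for $u_i$ into two via a localization argument. First I would record the elementary topological facts I need: $\Omega_1$ and $\Omega_2$ are disjoint open subsets of $B_{R_0}(x_0)$ with $B_{R_0}(x_0)=\Omega_1\cup\Omega_2\cup\bigl(\Gamma_U\cap B_{R_0}(x_0)\bigr)$ (a disjoint union, since $\Gamma_U=\Omega\setminus\{U>0\}$), each $\partial\Omega_j\cap B_{R_0}(x_0)\subseteq\Gamma_U\subseteq\{u_i=0\}$, and $\overline{\Omega_1}\cap\Omega_2=\emptyset$ (two distinct connected components of an open set cannot touch). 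Using these, together with the continuity of $u_i$ and the standard extension-by-zero fact (approximate $u_i\chi_{\Omega_1}$ by $(u_i-\varepsilon)^+\chi_{\Omega_1}$, which lies in $H^1$ because $\{u_i>\varepsilon\}\cap\Omega_1$ is relatively clopen in $\{u_i>\varepsilon\}$, and let $\varepsilon\to0^+$ using $\nabla u_i=0$ a.e. on $\{u_i=0\}$), one gets $u:=u_i\chi_{\Omega_1}$, $v:=u_i\chi_{\Omega_2}\in H^1(B_{R_0}(x_0))$ with $\nabla u=\chi_{\Omega_1}\nabla u_i$ and $\nabla v=\chi_{\Omega_2}\nabla u_i$. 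Finally $u+v=u_i$ pointwise and, since $f_i(x,0)=0$ by (G1), also $f_i(x,u)+f_i(x,v)=f_i(x,u_i)$ pointwise.

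Next I would introduce the distributions $S_1:=-\Delta u-f_i(\cdot,u)$ and $S_2:=-\Delta v-f_i(\cdot,v)$ on $B_{R_0}(x_0)$ (well defined since $f_i(\cdot,u),f_i(\cdot,v)\in L^\infty$ and $-\Delta u,-\Delta v\in H^{-1}$). Adding them and using the two pointwise identities above together with the equation $-\Delta u_i=f_i(\cdot,u_i)-\mu_i$ (where $\mu_i$ is the measure associated to $u_i$ by (G2); this is the measure written $\lambda_i$ in the statement) gives $S_1+S_2=-\mu_i$. Hence the whole lemma reduces to showing that $S_1\le0$ and $S_2\le0$ as distributions: then the Riesz representation theorem provides nonnegative Radon measures $\lambda:=-S_1$ and $\mu:=-S_2$, which satisfy $\lambda+\mu=\mu_i$ (and, being dominated by $\mu_i$, are concentrated on $\Gamma_U$), and $-\Delta u=f_i(\cdot,u)-\lambda$, $-\Delta v=f_i(\cdot,v)-\mu$, which is exactly the claim.

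The heart of the proof — and the step I expect to be the main obstacle — is the sign of $S_1$, i.e. proving $-\Delta u\le f_i(\cdot,u)$ in $\Deh'(B_{R_0}(x_0))$. The key observation is that $\mu_i\equiv0$ on the open set $\Omega_1$, so $-\Delta u_i=f_i(\cdot,u_i)$ in $\Deh'(\Omega_1)$ with $f_i(\cdot,u_i)\in L^\infty(\Omega_1)$, and this identity extends by density to test functions that lie in $H^1$ with compact support in $\Omega_1$. Given $0\le\varphi\in C^\infty_\mathrm{c}(B_{R_0}(x_0))$ and a smooth nondecreasing $\psi_\delta$ with $\psi_\delta\equiv0$ on $[0,\delta/2]$ and $\psi_\delta\equiv1$ on $[\delta,+\infty)$, the function $\varphi\,\psi_\delta(u)$ is such an admissible test function: its support is contained in $\operatorname{supp}\varphi\cap\{u\ge\delta/2\}=\operatorname{supp}\varphi\cap\{u_i\ge\delta/2\}\cap\Omega_1$, a compact subset of $\Omega_1$ (here it is crucial that the cutoff is a function of $u$ itself rather than of $\mathrm{dist}(\cdot,\Gamma_U)$, so that the commutator term below carries a definite sign). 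Testing and using the product rule, and the fact that $u=u_i$ on $\Omega_1$, gives
\[
\int_{\Omega_1}\psi_\delta(u_i)\,\nabla u_i\cdot\nabla\varphi-\int_{\Omega_1}f_i(x,u_i)\,\psi_\delta(u_i)\,\varphi=-\int_{\Omega_1}\psi_\delta'(u_i)\,|\nabla u_i|^2\,\varphi\le0 .
\]
Rewriting the left-hand side as an integral over $B_{R_0}(x_0)$ — using $\psi_\delta(u)\nabla u=\chi_{\Omega_1}\psi_\delta(u_i)\nabla u_i$ and the analogous identity for the lower-order term, both valid because $\psi_\delta(0)=0$ and $\nabla u=0$ a.e. on $\{u=0\}$ — and then letting $\delta\to0^+$ by dominated convergence (so that $\psi_\delta(u)\nabla u\to\nabla u$ and $f_i(\cdot,u)\psi_\delta(u)\to f_i(\cdot,u)$ in $L^2_\mathrm{loc}$, again since $\nabla u$ and $f_i(\cdot,u)$ vanish where $u=0$), one obtains $\langle S_1,\varphi\rangle=\int\nabla u\cdot\nabla\varphi-\int f_i(\cdot,u)\,\varphi\le0$, i.e. $S_1\le0$.

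The same argument with $\Omega_1$ replaced by $\Omega_2$ yields $S_2\le0$, completing the proof. Besides the sign of $S_1$, the only points requiring genuine care are the clopen separation used in the $H^1$ extension-by-zero and the verification that $\varphi\,\psi_\delta(u)$ is indeed compactly supported inside $\Omega_1$ (which is where $\overline{\Omega_1}\cap\Omega_2=\emptyset$ and the continuity of $u_i$ are used); everything else — the distributional identity $S_1+S_2=-\mu_i$, the passage to the limit, and the identification of nonnegative distributions with measures — is routine.
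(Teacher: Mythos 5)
Your proof is correct, and its skeleton coincides with the paper's: first identify $\nabla u=\chi_{\Omega_1}\nabla u_i$, then prove the one--sided distributional inequality $\Delta u+f_i(x,u)\ge 0$ (and likewise for $v$), and finally invoke the Riesz representation theorem; the identity $\lambda+\mu=\lambda_i$ then follows by adding the two equations and using $u+v=u_i$, exactly as you observe. Where you genuinely diverge is in the implementation of the two approximation steps. The paper selects, via Sard's theorem, a sequence $\varepsilon_n\downarrow 0$ for which the superlevel sets $\{u_i>\varepsilon_n\}$ are regular, integrates by parts classically on $\Omega_1\cap\{u_i>\varepsilon_n\}$ using Green's identities, and reads the sign off the boundary term $\int_{\Omega_1\cap\partial\{u_i>\varepsilon_n\}}(-\partial_\nu u_i)\,\varphi\,d\sigma\ge 0$, the outward normal derivative of $u_i$ on its own superlevel set being nonpositive. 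You instead test with $\varphi\,\psi_\delta(u)$ and read the sign off the commutator $-\int\psi_\delta'(u)\,|\nabla u|^2\,\varphi\le 0$, which is precisely a smeared version of that boundary term. Your route avoids Sard's theorem and any discussion of traces on level sets, at the price of the admissibility check for $\varphi\,\psi_\delta(u)$ (compact support in $\Omega_1$, which you correctly reduce to $\overline{\Omega_1}\cap\Omega_2=\emptyset$ and the continuity of $u$); it is marginally more robust and self-contained, while the paper's version is shorter on the page. Your explicit bookkeeping of $S_1+S_2=-\mu_i$ also makes the additivity $\lambda_i=\lambda+\mu$ cleaner than in the original, where it is left implicit.
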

\begin{proof}
We prove the result for $u$ only. Take $\varphi\in \Deh(B_{R_0}(x_0))$ and consider a sequence $\varepsilon_n\rightarrow 0$ such that the sets $\{u>\varepsilon_n\}$ are regular (which exists by Sard's Theorem). We have
\begin{eqnarray*}
\int_{B_{R_0}(x_0)} u \nabla \varphi &=& \int_{\Omega_1} u_i \nabla \varphi = \lim_n \int_{\Omega_1\cap \{u_i>\varepsilon_n\}} u_i \nabla \varphi\\
&=& \lim_n \int_{\Omega_1\cap \{u_i>\varepsilon_n\}} -\nabla u_i \varphi + \lim_n \int_{\Omega_1 \cap \partial\{u_i>\varepsilon_n\}}u_i \varphi \nu\\
&=& \int_{\Omega_1}-\nabla u_i \varphi + \lim_n  \int_{\Omega_1\cap \{u>\varepsilon_n\}} \varepsilon_n \nabla \varphi = \int_{\Omega_1}-\nabla u_i \varphi
\end{eqnarray*}
and hence $\nabla u=\nabla u_i \chi_{\Omega_1}$. On the other hand the existence of the measure $\lambda$ comes from the fact that $\Delta u + f_i(x,u)\geq 0$ in $\Deh'(B_{R_0}(x_0))$: taking $\varphi\geq 0$,
\begin{eqnarray*}
\int_{B_{R_0}(x_0)} (u \Delta \varphi + f_i(x,u)\varphi) = \lim_n \int_{\Omega_1\cap \{u>\varepsilon_n\}} \left(u_i \Delta \varphi + f_i(x,u_i)\varphi \right) =\lim_n  \int_{\Omega_1\cap \partial\{u_i>\varepsilon_n\}}   \left( u_i \partial_\nu \varphi - \partial_\nu u_i \varphi \right).
\end{eqnarray*}
Now the result follows because
$$
\lim_n \int_{\Omega_1\cap \partial \{ u_i>\varepsilon_n\}} u_i \partial_\nu \varphi = \lim_n \int_{\Omega_1\cap \{u_i>\varepsilon_n\}}\varepsilon_n \Delta \varphi =0,
\qquad \text{and}\qquad \int_{\Omega_1\cap \partial \{u_i>\varepsilon_n\}} -\partial_\nu u_i \varphi \geq 0.
$$
\end{proof}

Hence in both cases the situation is the following: we have two non negative $H^1$--functions $u,v$ such that $u\cdot v=0$ in $B_{R_0}(x_0)$,  $B_{R_0}(x_0)\cap\{u>0\}=\Omega_1$, $B_{R_0}(x_0)\cap \{v>0\}=\Omega_2$, $B_{R_0}(x_0)\setminus \Gamma_U=\Omega_1\cup \Omega_2$, and there exist functions $f,g$ satisfying (G1) and nonnegative Radon measures $\lambda,\mu$ satisfying (G2) such that
$$
\left\{
\begin{array}{l}
-\Delta u= f(x,u)-\lambda\\
-\Delta v=g(x,v)-\mu
\end{array}
\right.
\qquad
\text{ in } B_{R_0}(x_0).
$$
Moreover assumption (G3) holds. To end this section we will prove that in fact $\lambda=\mu$ in $B_{R_0}(x_0)$, which will moreover imply that $\Gamma^\star \cap B_{R_0}(x_0)$ is a $C^{1,\alpha}$ hyper-surface.

\begin{lemma}[Reflection Principle] \label{lemma:reflection_principle} Let $\bar u,\bar v\in H^1_\text{loc}(\R^N)\cap C(\R^N)$ be two non zero and non negative functions in $\R^N$ such that $\bar u\cdot \bar  v = 0$ and
$$
\left\{
\begin{array}{l}
\Delta \bar u= \bar \lambda\\
\Delta \bar v = \bar \mu
\end{array}
\right. \qquad \qquad \text{ in } \R^N
$$
for some $\bar \lambda,\bar \mu\in \Mah_\text{loc}(\R^N)$, locally non negative Radon measures satisfying (G2). Suppose moreover that $\Gamma:=\Gamma_{(\bar u,\bar v)}=\partial\{\bar u>0\}=\partial \{\bar v>0\}$ is an hyper-plane and that $(G3)$ holds, that is
\begin{equation}\label{eq:derivative_of_E_at_limit}
\frac{d}{dr} E(x_0,(\bar u,\bar v),r)=\frac{2}{r^{N-2}}\int_{\partial B_r(x_0)}((\partial_\nu \bar u)^2+(\partial_\nu \bar v)^2)\, d\sigma \quad \text{ for every }x_0\in \R^N, r>0
\end{equation}
(where we  recall that $E(x_0,(\bar u,\bar v),r)=\frac{1}{r^{N-2}}\int_{B_r(x_0)}(|\nabla \bar u|^2+|\nabla \bar v|^2$) in this case).
Then for every Borel set $E\subseteq \R^N$ it holds
$$\bar \lambda (E)=\int_{E\cap \partial\{\bar u>0\}} -\partial_\nu \bar u\, d\sigma=\int_{E\cap \partial\{\bar v>0\}} - \partial_\nu \bar v\, d\sigma = \bar \mu(E)$$ and in particular $\Delta (\bar u-\bar v)=0$ in $\R^N$.
\end{lemma}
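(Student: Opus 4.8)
The plan is to reduce the whole statement to an elementary reflection identity for harmonic functions in a half-space, and to use (G3) only through the heuristic Pohozaev computation of \S\ref{subsect:heuristic}, now made rigorous.

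\emph{Normalisation and identification of the measures.} Since $\Gamma$ is a hyper-plane, $\R^N\setminus\Gamma$ has exactly two components, and $\{\bar u>0\}$, $\{\bar v>0\}$ are disjoint non-empty open sets whose topological boundary is exactly $\Gamma$; a short topological argument then shows that after a rigid motion and relabelling $\{\bar u>0\}=\{x_N>0\}$ and $\{\bar v>0\}=\{x_N<0\}$. Because $\bar\lambda$ is concentrated on $\Gamma$, $\bar u$ is harmonic in $\{x_N>0\}$ and identically zero on $\{x_N\le 0\}$; by the Schwarz reflection principle its odd extension across $\{x_N=0\}$ is harmonic, whence $\bar u$ is real-analytic up to $\{x_N=0\}$ from the upper side, and symmetrically for $\bar v$ from the lower side. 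In particular the normal derivatives $\partial_{x_N}\bar u$, $\partial_{x_N}\bar v$ exist classically on $\Gamma$, and since $\bar u\ge 0$, $\bar v\ge 0$ vanish on $\Gamma$ we get $\partial_{x_N}\bar u\ge 0$ and $\partial_{x_N}\bar v\le 0$ there. A single integration by parts (Green's formula on $\{x_N>0\}$) against an arbitrary test function then identifies $\bar\lambda=(\partial_{x_N}\bar u)\,\mathscr H^{N-1}\lfloor\Gamma=(-\partial_\nu\bar u)\,\mathscr H^{N-1}\lfloor\Gamma$, with $\nu$ the exterior normal to $\{\bar u>0\}$, and likewise $\bar\mu=(-\partial_{x_N}\bar v)\,\mathscr H^{N-1}\lfloor\Gamma=(-\partial_\nu\bar v)\,\mathscr H^{N-1}\lfloor\Gamma$. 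Thus all the claimed equalities, and $\Delta(\bar u-\bar v)=\bar\lambda-\bar\mu=0$, will follow once we prove the pointwise reflection law $\partial_{x_N}\bar u=-\partial_{x_N}\bar v$ on $\Gamma$.

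\emph{The Pohozaev identity with a flat interface.} Fix $x_0\in\R^N$ and $r>0$ and integrate the Rellich identity \eqref{eq:Rellich} (with $f\equiv 0$) separately over the Lipschitz domains $B_r(x_0)\cap\{x_N>0\}$ and $B_r(x_0)\cap\{x_N<0\}$, on whose flat faces $\bar u$, resp.\ $\bar v$, is smooth. On the flat face $B_r(x_0)\cap\{x_N=0\}$ all tangential derivatives of $\bar u$ and $\bar v$ vanish, so there $|\nabla\bar u|^2=(\partial_{x_N}\bar u)^2$ and $\langle x-x_0,\nabla\bar u\rangle=-(x_0)_N\,\partial_{x_N}\bar u$. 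Adding the two identities, dividing by $r^{N-1}$ and using \eqref{eq:derivative_tilde_E} for $U=(\bar u,\bar v)$ yields exactly \eqref{eq:derivative_of_E_heuristics} in this geometry:
\begin{equation*}
\frac{d}{dr}E(x_0,(\bar u,\bar v),r)=\frac{2}{r^{N-2}}\int_{\partial B_r(x_0)}\left((\partial_\nu\bar u)^2+(\partial_\nu\bar v)^2\right)d\sigma+\frac{(x_0)_N}{r^{N-1}}\int_{B_r(x_0)\cap\{x_N=0\}}\left((\partial_{x_N}\bar u)^2-(\partial_{x_N}\bar v)^2\right)d\sigma
\end{equation*}
for a.e.\ $r>0$. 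Comparing with hypothesis \eqref{eq:derivative_of_E_at_limit} forces $(x_0)_N\int_{B_r(x_0)\cap\{x_N=0\}}\left((\partial_{x_N}\bar u)^2-(\partial_{x_N}\bar v)^2\right)d\sigma=0$ for a.e.\ $r$, hence for all $r$ by continuity in $r$, and for every $x_0$. Choosing $x_0=(y_0,\varepsilon)$ with $\varepsilon\ne 0$ and letting $y_0\in\R^{N-1}$ and $r>|\varepsilon|$ vary, the disks $B_r(x_0)\cap\{x_N=0\}$ exhaust all $(N-1)$-balls of $\Gamma$, so $(\partial_{x_N}\bar u)^2\equiv(\partial_{x_N}\bar v)^2$ on $\Gamma$; together with the sign information above this gives $\partial_{x_N}\bar u=-\partial_{x_N}\bar v$, hence $\bar\lambda=\bar\mu$.

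\emph{Main obstacle.} The only genuinely delicate point is the regularity needed to justify the two integrations by parts on the half-balls, i.e.\ that $\bar u$ and $\bar v$ are smooth up to $\Gamma$ from their respective sides; this is supplied once and for all by Schwarz reflection, which upgrades the bare $H^1_\text{loc}\cap C$ hypothesis. Everything else is the bookkeeping of \eqref{eq:Rellich} already performed heuristically in \S\ref{subsect:heuristic}, now legitimate, combined with the freedom to push the centre $x_0$ off the hyper-plane so as to decouple the two surface integrals.
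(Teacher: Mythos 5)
Your proposal is correct and follows essentially the same route as the paper's proof: first identify $\bar\lambda$ and $\bar\mu$ as the normal-derivative surface measures on $\Gamma$ via Green's formula, then integrate the Rellich identity over the two half-balls and compare the resulting extra interface term with hypothesis \eqref{eq:derivative_of_E_at_limit} to force $(\partial_{x_N}\bar u)^2=(\partial_{x_N}\bar v)^2$ on $\Gamma$, concluding by the sign information. The only additions are the explicit Schwarz-reflection justification of smoothness up to $\Gamma$ (which the paper asserts without detail) and the explicit choice of centres $x_0$ off the hyper-plane, both of which are consistent with the paper's argument.
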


\begin{proof}
Suppose without loss of generality that $\Gamma=\R^{N-1}\times\{0\}$ and that $u\not\equiv 0$ in $\{x_N>0\}$, $v\not\equiv 0$ in $\{x_N< 0\}$. In this case we observe that $\bar u\in C^\infty(\{x_N\geq 0\})$, $\bar v\in C^\infty(\{x_N\leq 0\})$ and that our goal  is to check that
$$
\bar \lambda (E)=\int_{E\cap \Gamma} \partial_{e_N} \bar u\, d\sigma=\int_{E\cap \Gamma} - \partial_{e_N} \bar v\, d\sigma = \bar \mu(E),\\
$$
where $e_N$ is the vector $(0,\ldots,0,1)$. We divide the proof in two steps.

\noindent STEP 1. For every $E$ Borel set of $\R^N$ it holds
\begin{equation}\label{eq:expression_for_lambda_mu}
\bar \lambda(E)=\int_{E\cap \Gamma}\partial_{e_N} \bar u\,d\sigma \qquad \text{ and }\qquad  \bar \mu(E)=\int_{E\cap \Gamma}-\partial_{e_N} \bar v \, d\sigma.
\end{equation}
We present the proof of this claim only for $\bar \lambda$ - for $\bar \mu$ the computations are analogous. It suffices to prove that \eqref{eq:expression_for_lambda_mu} holds for every open ball $B_r(x_0)$. If $B_r(x_0)\cap \Gamma=\emptyset$ then $\bar \lambda(B_r(x_0))=0$ and equality holds. If on the other hand $B_r(x_0)\cap \Gamma\neq \emptyset$ then for any given $\delta>0$ take $\varphi_\delta$ to be a cut-off function such that $\varphi_\delta=1$ in $B_{r-\delta}(x_0)$, $\varphi_\delta=0$ in $\R^N\setminus B_r(x_0)$. We have
\begin{eqnarray*}
\int_{B_r(x_0)}\varphi_\delta\, d\bar\lambda &=& -\int_{B_r(x_0)}\nabla\bar u\cdot \nabla \varphi_\delta = -\int_{B_r(x_0)\cap \{\bar u>0\}}\nabla \bar u\cdot \nabla \varphi_\delta\\
&=&\int_{B_r(x_0)\cap \{\bar u>0\}}\Delta \bar u \varphi_\delta-\int_{B_r(x_0)\cap \Gamma}(\partial_{-e_N} \bar u )\varphi_\delta\, d\sigma\\
&=& \int_{B_r(x_0)\cap \Gamma}(\partial_{e_N} \bar u) \varphi_\delta\, d\sigma.
\end{eqnarray*}
Thus
$$
\bar \lambda(B_r(x_0))=\lim_{\delta\rightarrow 0}\int_{B_r(x_0)}\varphi_\delta\, d\bar \lambda = \int_{B_r(x_0)\cap \Gamma}\partial_{e_N} \bar u\, d\sigma.
$$

\noindent STEP 2. $\partial_{e_N} \bar u = -\partial_{e_N} \bar v$ in $\Gamma$.

By using the regularity of $\bar u, \bar v$ together with the fact that $\Gamma$ is an hyper-plane, we will compute the derivative of $E$ directly, and compare afterwards the result with expression \eqref{eq:derivative_of_E_at_limit}. Since $\bar u,\bar v\in H^1_\text{loc}(\R^N)$, then
\begin{eqnarray*}
\frac{d}{dr}E(x_0,(\bar u,\bar v),r)&=& \frac{2-N}{r^{N-1}}\int_{B_r(x_0)}(|\nabla \bar u|^2+|\nabla \bar v|^2)+\frac{1}{r^{N-2}}\int_{\partial B_r(x_0)}(|\nabla \bar u|^2+|\nabla \bar v|^2)\, d\sigma\\
&=& \frac{2-N}{r^{N-1}}\int_{B_r(x_0)\cap \{\bar u>0\}}|\nabla \bar u|^2+\frac{1}{r^{N-2}}\int_{\partial B_r(x_0)\cap \{\bar u>0\}}|\nabla \bar u|^2\, d\sigma+\\
&&+\frac{2-N}{r^{N-1}}\int_{B_r(x_0)\cap \{\bar v>0\}}|\nabla \bar v|^2+\frac{1}{r^{N-2}}\int_{\partial B_r(x_0)\cap \{\bar v>0\}}|\nabla \bar v|^2\, d\sigma.
\end{eqnarray*}
In order to rewrite the integrals on $\partial B_r(x_0)$, we use the following Rellich--type identity
\begin{equation}\label{eq:Pohoazaev}
\text{div}\left( (x-x_0)|\nabla \bar u|^2-2\langle x-x_0,\nabla \bar u\rangle \nabla \bar u \right)=(N-2)|\nabla \bar u|^2-2 \langle x-x_0,\nabla \bar u \rangle\Delta \bar u
\end{equation}
in $B_r(x_0)\cap \{\bar u>0\}$ (recall that $\bar u$ is smooth in this set). By the fact that $\Delta \bar u=0$ in the latter set and that $\nabla \bar u=(\partial_{e_N} \bar u)e_N$ on $\partial \{\bar u>0\}=\Gamma$, we have
$$
\int_{\partial B_r(x_0)\cap\{\bar u>0\}}|\nabla \bar u|^2=2\int_{\partial B_r(x_0)\cap\{\bar u>0\}}(\partial_\nu \bar u)^2 - \frac{1}{r}\int_{B_r(x_0)\cap \Gamma}(\partial_{e_N} \bar u)^2 \langle e_N,x-x_0\rangle+\frac{N-2}{r}\int_{B_r(x_0)\cap\{\bar u>0\}}|\nabla \bar u|^2
$$
and analogously
$$\int_{\partial B_r(x_0)\cap\{\bar v>0\}}|\nabla \bar v|^2=2\int_{\partial B_r(x_0)\cap\{\bar v>0\}}(\partial_\nu \bar v)^2 + \frac{1}{r}\int_{B_r(x_0)\cap \Gamma}(\partial_{e_N} \bar v)^2 \langle e_N,x-x_0\rangle+\frac{N-2}{r}\int_{B_r(x_0)\cap\{\bar v>0\}}|\nabla \bar v|^2.$$
Thus
$$
\frac{d}{dr}E(x_0,(\bar u,\bar v),r)=\frac{2}{r^{N-2}}\int_{\partial B_r(x_0)}( (\partial_\nu \bar u)^2+(\partial_\nu \bar v)^2   )\, d\sigma + \frac{1}{r^{N-1}}\int_{B_r(x_0)\cap \Gamma}[(\partial_{e_N} \bar v)^2-(\partial_{e_N} \bar u)^2]\langle e_N,x-x_0  \rangle
$$
which, comparing with \eqref{eq:derivative_of_E_at_limit}, yields that
$$
\int_{B_r(x_0)\cap \Gamma} [(\partial_{e_N} \bar v)^2-(\partial_{e_N} \bar u)^2]\langle e_N,x-x_0  \rangle=0 \qquad \text{ for every }x_0\in\R^N,\ r>0,
$$
and therefore $(\partial_{e_N} \bar v)^2=(\partial_{e_N} \bar u)^2$ on $\Gamma$. Finally we just have to observe that
$|\partial_{e_N \bar u}|=\partial_{e_N} \bar u$ and $|\partial_{e_N}\bar v|=-\partial_{e_N}\bar v$.
\end{proof}

\begin{teo}\label{teo:global_equation}
With the previous notations we have $\lambda(E)=\mu(E)$ for every $E$ Borel set of $B_{R_0}(x_0)$, and in particular
\begin{equation}\label{eq:equation_with_2_components}
 -\Delta (u-v)=f(x,u)-g(x,v) \quad \text{ in } B_{R_0}(x_0).
 \end{equation}
\end{teo}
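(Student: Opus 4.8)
The plan is to obtain $\lambda=\mu$ by localizing the Reflection Principle (Lemma~\ref{lemma:reflection_principle}) via a blow-up, and then to transfer the information thus gained on the tangent measures back to $\lambda$ and $\mu$ themselves through a differentiation argument. Since $\lambda$ and $\mu$ are non negative Radon measures both concentrated on $\Gamma_U\cap B_{R_0}(x_0)=\Gamma^\star\cap B_{R_0}(x_0)$, I would first reduce the statement to the computation of the Radon--Nikodym density of $\lambda$ with respect to $\nu:=\lambda+\mu$: by the Besicovitch differentiation theorem one has $\lambda=\theta\,\nu$ with $\theta(y)=\lim_{\rho\to0^+}\lambda(B_\rho(y))/\nu(B_\rho(y))$ for $\nu$-a.e.\ $y$, so it is enough to show that $\theta(y)=1/2$ at $\nu$-a.e.\ point. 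Indeed, this gives $\lambda=\tfrac12\nu=\mu$, and then \eqref{eq:equation_with_2_components} follows by subtracting $-\Delta v=g(x,v)-\mu$ from $-\Delta u=f(x,u)-\lambda$.

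To compute $\theta(y)$, I would fix such a $y\in\Gamma^\star\cap B_{R_0}(x_0)$ and blow up there, $U_k(x)=U(y+t_kx)/\rho_k$ with $\rho_k=H(y,U,t_k)^{1/2}$ and $t_k\downarrow0$. By Theorem~\ref{teo:blow_up_convergence}, along a subsequence $U_k\to\bar U$ in $C^{0,\alpha}_\text{loc}\cap H^1_\text{loc}(\R^N)$ with $\bar U\in\Geh_\text{loc}(\R^N)$; moreover the rescalings $\lambda_k,\mu_k$ of $\lambda$ and $\mu$ converge weak-$\star$ to non negative measures $\bar\lambda,\bar\mu$ concentrated on $\Gamma_{\bar U}$, with $-\Delta\bar u=-\bar\lambda$, $-\Delta\bar v=-\bar\mu$ and (G3) holding for $\bar U$ with $F\equiv0$. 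By property (P), $\Gamma_{\bar U}$ is a hyper-plane through the origin; combining this with the Local Separation Property (Proposition~\ref{prop:local_separation_property}) for $U_k$ near the origin, and with the fact that a hyper-plane cannot contain a ball, one verifies that $\Gamma_{\bar U}$ separates $\{\bar u>0\}$ from $\{\bar v>0\}$ and that $\partial\{\bar u>0\}=\partial\{\bar v>0\}=\Gamma_{\bar U}$; in particular $\bar u,\bar v\not\equiv0$. Hence Lemma~\ref{lemma:reflection_principle} applies and gives $\Delta(\bar u-\bar v)=0$ in $\R^N$, i.e.\ $\bar\lambda=\bar\mu$. Furthermore $\bar\lambda\neq0$: otherwise $\bar u$ would be a non negative entire harmonic function vanishing at $0$, hence $\bar u\equiv0$, and likewise $\bar v\equiv0$, contradicting $\|\bar U\|_{L^2(\partial B_1(0))}=1$; and since $\bar U$ is homogeneous (of degree $1$, by Hopf's Lemma, as in the proof of Theorem~\ref{teo:Hausdorff_dim_estimates}), the function $r\mapsto\bar\lambda(B_r(0))=\bar\mu(B_r(0))$ is positive and continuous on $(0,\infty)$.

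Finally, for a.e.\ $r\in(0,1)$ the weak-$\star$ convergence yields $\lambda_k(B_r(0))\to\bar\lambda(B_r(0))$ and $\mu_k(B_r(0))\to\bar\mu(B_r(0))$; using the scaling identity $\lambda_k(B_r(0))=\rho_k^{-1}t_k^{-(N-2)}\lambda(B_{t_kr}(y))$ and its analogue for $\mu$, I get
\[
\frac{\lambda(B_{t_kr}(y))}{\nu(B_{t_kr}(y))}=\frac{\lambda_k(B_r(0))}{\lambda_k(B_r(0))+\mu_k(B_r(0))}\ \longrightarrow\ \frac{\bar\lambda(B_r(0))}{\bar\lambda(B_r(0))+\bar\mu(B_r(0))}=\frac12 .
\]
As $t_kr\to0$ and the limit defining $\theta(y)$ exists, this forces $\theta(y)=1/2$, completing the argument. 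I expect the delicate point to be precisely this last transfer: the blow-up only controls the density of $\lambda$ with respect to $\nu$ along the particular radii $t_kr$, and it is the Besicovitch differentiation theorem — whose applicability rests on the non-degeneracy $\bar\lambda\neq0$ — that promotes this to the genuine limit and hence to the equality of the measures. A secondary technical issue is checking that the local separation of $U$ really passes to the blow-up limit, so that the hypotheses of Lemma~\ref{lemma:reflection_principle} are met.
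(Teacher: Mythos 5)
Your proposal is correct and follows essentially the same route as the paper: blow up at a free-boundary point, use property (P) and the local separation to verify the hypotheses of Lemma \ref{lemma:reflection_principle}, deduce $\bar\lambda=\bar\mu\neq 0$ for the tangent measures, and transfer this back to $\lambda,\mu$ by differentiation of measures. The only (immaterial) difference is in the last step, where you compute the density of $\lambda$ with respect to $\lambda+\mu$ and show it equals $1/2$, whereas the paper computes the mutual densities $D_\mu\lambda=D_\lambda\mu=1$ and invokes the Radon--Nikodym decomposition to get the two inequalities $\lambda\geq\mu$ and $\mu\geq\lambda$.
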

\begin{proof}
We claim that
$$\lim_{r\rightarrow 0}\frac{\lambda(\bar B_r(y))}{\mu(\bar B_r(y))}=1 \qquad \text{ for every }y\in \Gamma_U\cap B_{R_0}(x_0).$$
Fix $y\in \Gamma_U\cap B_{R_0}(x_0)$ and consider any arbitrary sequence $r_k\downarrow 0$. If we define $u_k(x)=u(y+r_k x)/\rho_k$, $v_k(x)=v(y+r_k x )/\rho_k$ as a usual blowup sequence at a point $y$, and consider $\lambda_k,\mu_k$ to be the associated rescaled measures, then Theorem \ref{teo:blow_up_convergence} yields the existence of a pair of functions $(\bar u,\bar v)\in \Geh_\text{loc}(\R^N)$ and measures $(\bar \lambda, \bar \mu)$ such that
\begin{eqnarray*}
u_k \rightarrow \bar u, & \quad v_k\rightarrow \bar v &\qquad \text{ in } H^1_\text{loc}\cap C^{0,\alpha}_\text{loc}\\
\lambda_k\rightharpoonup \bar \lambda,&\quad \mu_k\rightharpoonup \bar \mu & \qquad \text{ in the measure sense,}
\end{eqnarray*}
and $\Delta \bar u=\bar \lambda$, $\Delta \bar v=\bar \mu$ in $\R^N$.
Property (P) implies that $\Gamma_{(\bar u,\bar v)}$ is a hyper-plane passing through the origin. From this fact, the uniform convergence of $u_k,v_k$ to $\bar u,\bar v$, and the second statement of Proposition \ref{prop:local_separation_property}, we deduce also that $\bar u,\bar v\neq 0$. Thus we can apply Lemma \ref{lemma:reflection_principle} to the functions $\bar u,\bar v$, which provides
$$\bar \lambda (E)=\int_{E\cap \partial\{\bar u>0\}} -\partial_\nu \bar u\, d\sigma=\int_{E\cap \partial\{\bar v>0\}} - \partial_\nu \bar v\, d\sigma = \bar \mu(E)$$
for every Borel set $E$ of $\R^N$. In particular $\bar \lambda(\bar B_1(0))=\bar \mu(\bar B_1(0))\neq 0$ and $\bar \lambda(\partial B_1(0))=\bar \mu(\partial B_1(0))=0$, thus
$$\lambda_k(\bar B_1(0))\rightarrow \bar \lambda(\bar B_1(0)),\qquad \mu_k(\bar B_1(0))\rightarrow \mu(\bar B_1(0))$$
(see for instance \cite[\S 1.6--Theorem 1]{evans_mt}) and
$$1=\frac{\bar \lambda(\bar B_1(0))}{\bar \mu(\bar B_1(0))}=\lim_k\frac{\lambda_k(\bar B_1(0))}{\mu_k(\bar B_1(0))}=\lim_k \frac{\lambda(\bar B_{r_k}(y))}{\mu(\bar B_{r_k}(y))},$$as claimed.

Therefore $D_{\mu}\lambda(y)=1$ for $\mu$--a.e. $y\in B_{R_0}(x_0)$ and $D_{\lambda}\mu (y)=1$ for $\lambda$--a.e. $y\in B_{R_0}(x_0)$ (recall that both $\lambda$ and $\mu$ are supported on $\Gamma$), and hence the Radon-Nikodym Decomposition Theorem (see for instance \cite[\S 1.6 - Theorem 3]{evans_mt}) yields that for every Borel set $E\subseteq B_{R_0}(x_0)$
\begin{eqnarray*}
\lambda(E)=\lambda_s(E)+\mu(E)\geq \mu(E)\\
\mu(E)=\mu_s(E)+\lambda(E)\geq \lambda(E)
\end{eqnarray*}
(where $\lambda_s\geq 0$ represents the singular part of $\lambda$ with respect to $\mu$ and $\mu_s\geq 0$ represents the singular part of $\mu$ with respect to $\lambda$). Hence $\lambda(E)=\mu(E)$, which concludes the proof of the theorem.
\end{proof}

With the following result we end the proof of Theorem \ref{teo:Sigma_U_smooth}.

\begin{coro}\label{coro:gradient_non_zero}
Under the previous notations, $u-v\in C^{1,\alpha}(B_{R_0}(x_0))$ for every $0<\alpha<1$, and $$\nabla (u-v)(x_0)\neq 0.$$
\end{coro}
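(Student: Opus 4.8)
The plan is to split the corollary into its two assertions and handle them separately: the $C^{1,\alpha}$ regularity by interior elliptic estimates applied to equation \eqref{eq:equation_with_2_components}, and the non--vanishing of the gradient at $x_0$ by a blow--up argument. I would first record two consequences of Proposition \ref{prop:local_separation_property}: on $B_{R_0}(x_0)$ one has $u=w^+$ and $v=w^-$ where $w:=u-v$, and $\{w=0\}\cap B_{R_0}(x_0)=\Gamma_U\cap B_{R_0}(x_0)=\Gamma^\star\cap B_{R_0}(x_0)$.

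\emph{Regularity.} By Theorem \ref{teo:global_equation}, $w$ solves $-\Delta w=f(x,u)-g(x,v)$ in $B_{R_0}(x_0)$. Since $u,v$ are bounded and (G1) gives $|f(x,u)|\le d\,u$, $|g(x,v)|\le d\,v$, the right--hand side lies in $L^\infty(B_{R_0}(x_0))$; hence interior $L^p$--estimates yield $w\in W^{2,p}_\text{loc}(B_{R_0}(x_0))$ for every $p<\infty$, and the Morrey embedding $W^{2,p}\hookrightarrow C^{1,1-N/p}$ gives $w\in C^{1,\alpha}(B_{R_0}(x_0))$ for every $0<\alpha<1$.

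\emph{Non--degeneracy of the gradient.} I would argue by contradiction, assuming $\nabla w(x_0)=0$. Fix any $r_k\downarrow0$ and set $w_k(x)=w(x_0+r_kx)/\rho_k$ with $\rho_k=\|U(x_0+r_k\,\cdot)\|_{L^2(\partial B_1(0))}$, so $\rho_k^2=H(x_0,U,r_k)$. Since $w(x_0)=0$ and $\nabla w(x_0)=0$, one has $\nabla w_k(0)=(r_k/\rho_k)\nabla w(x_0)=0$ for every $k$. On the other hand $w_k=u_k-v_k$, where $(u_k,v_k)$ is the blow--up at $x_0$ of the pair $(u,v)\in\Geh(B_{R_0}(x_0))$; by Theorem \ref{teo:blow_up_convergence} and Corollary \ref{coro:U_homogeneous} (case 1), up to a subsequence $(u_k,v_k)\to(\bar u,\bar v)\in\Geh_\text{loc}(\R^N)$ in $H^1_\text{loc}\cap C^{0,\beta}_\text{loc}(\R^N)$, $\alpha$--homogeneous with $\alpha=N(x_0,(u,v),0^+)$ (which equals $N(x_0,U,0^+)$, since $H$ and $E$ of $U$ and of $(u,v)$ coincide near $x_0$ by the very construction of $u,v$). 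Exactly as in the proof of Theorem \ref{teo:global_equation}, property (P) forces $\Gamma_{(\bar u,\bar v)}$ to be a hyper--plane through the origin, and Proposition \ref{prop:local_separation_property} gives $\bar u,\bar v\not\equiv0$; being non negative, homogeneous and harmonic on the half--space $\{\bar u>0\}$ and vanishing on its bounding hyper--plane, Hopf's Lemma forces $\alpha=1$ and $\bar u=a_1(x\cdot\nu)^+$, $\bar v=a_2(x\cdot\nu)^-$ with $a_1,a_2>0$; by the Reflection Principle (Lemma \ref{lemma:reflection_principle}) $a_1=a_2=:a$. In particular $w_k\to\bar w:=a\,(x\cdot\nu)$ with $\nabla\bar w\equiv a\nu\neq0$. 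The decisive step is then to upgrade this last convergence to $C^1_\text{loc}$: here I would use that $\lambda=\mu$ (Theorem \ref{teo:global_equation}), so the rescaled measures cancel and $-\Delta w_k=f_k(x,u_k)-g_k(x,v_k)$ carries no measure term, with $|f_k(x,u_k)-g_k(x,v_k)|\le C\,r_k^2(u_k+v_k)\to0$ in $L^\infty_\text{loc}(\R^N)$. Combined with the uniform $L^\infty_\text{loc}$ and $H^1_\text{loc}$ bounds on $\{w_k\}$, interior $L^p$--estimates make $\{w_k\}$ bounded in $W^{2,p}_\text{loc}(\R^N)$ for every $p$, hence (up to a further subsequence) $w_k\to\bar w$ in $C^{1,\beta}_\text{loc}(\R^N)$ for every $\beta<1$; thus $\nabla w_k(0)\to\nabla\bar w(0)=a\nu\neq0$, contradicting $\nabla w_k(0)=0$ for all $k$. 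This proves $\nabla(u-v)(x_0)\neq0$, and simultaneously concludes Theorem \ref{teo:Sigma_U_smooth}: $\Gamma^\star\cap B_{R_0}(x_0)=\{w=0\}$ is a $C^{1,\alpha}$ hyper--surface by the implicit function theorem, and since $u=w^+$, $v=w^-$ one has $|\nabla U|=|\nabla w|$ on each side of it near $x_0$, which tends to $|\nabla w(x_0)|\neq0$ from both sides, i.e.\ \eqref{eq:reflection_principle_section5}.

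I expect the $C^1_\text{loc}$ upgrade to be the main obstacle: taken individually, $u_k$ and $v_k$ carry the rescaled measures on their nodal sets and are \emph{not} bounded in $W^{2,p}$, so it is essential to exploit the cancellation $\lambda=\mu$ produced by Theorem \ref{teo:global_equation}. Without it the only convergence at hand for $w_k$ is in $C^0_\text{loc}$ against a fixed $L^2(\partial B_1)$--normalization, which by itself carries no information on $\nabla w(x_0)$.
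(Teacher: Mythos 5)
Your proposal is correct and follows essentially the same route as the paper: interior elliptic regularity for the $C^{1,\alpha}$ statement, and for the non-degeneracy a blow-up at $x_0$ whose limit $\bar w=\bar u-\bar v$ is harmonic (by the cancellation $\lambda=\mu$ from Theorem \ref{teo:global_equation} and Lemma \ref{lemma:reflection_principle}) and homogeneous of degree one, hence linear with non-zero gradient, combined with an upgrade of $w_k\to\bar w$ to $C^1_{\mathrm{loc}}$ using that $-\Delta w_k=f_k(x,u_k)-g_k(x,v_k)\to 0$ in $L^\infty_{\mathrm{loc}}$. The only cosmetic differences are that the paper argues directly (writing $\nabla w_k(0)=t_k\nabla w(x_0)/\rho_k$ and a Schauder-type estimate on $w_k-\bar w$) rather than by contradiction and via $W^{2,p}$ compactness, and your explicit identification of the cancellation of the measures as the key point is exactly the right emphasis.
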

\begin{proof}
Since $w=u-v$ solves $-\Delta w=f(x,w^+)-g(x,w^-)$ and $f(x,w^+)-g(x,w^-)\in L^\infty(B)$, then standard elliptic regularity yields $w\in C^{1,\alpha}(B_{R_0}(x_0))$ for all $0<\alpha<1$. Now if we consider a blowup sequence centered at $x_0$, namely $w_k(x):=(u(x_0+t_k x)-v(x_0+t_k x))/\rho_k$ then
$$
\begin{array}{c}
w_k\rightarrow \bar w:=\bar u-\bar v \qquad \text{ in } H^1_\text{loc}\cap C^{0,\alpha}_\text{loc}(B_2(0))\\
-\Delta w_k=f_k(x,u_k)-g_k(x,v_k)\rightarrow 0 \quad \text{ in } L^\infty(B_2(0))\\
\Delta \bar w=0 \qquad \text{ in } B_2(0)
\end{array}
$$
and hence
$$\|w_k-\bar w\|_{C^{1,\alpha}(B_1(0))}\leq C( \|w_k-\bar w\|_{L^\infty(B_2(0))}+\|f_k(x,u_k)-g_k(x,v_k))\|_{L^\infty(B_2(0))})\rightarrow 0.$$
Since (by Corollary \ref{coro:U_homogeneous}) $\bar w$ is a homogeneous function of degree one, then $\nabla \bar w(0)\neq 0$ and thus also $\nabla w_k(0)=r_k\nabla w (x_0)/\rho_k\neq 0$ for large $k$.
\end{proof}

\begin{proof}[Proof of Theorem \ref{teo:Sigma_U_smooth}.]
Corollary \ref{coro:gradient_non_zero} implies by the Implicit Function Theorem that $\Gamma^\star$ is indeed a $C^{1,\alpha}$ hyper-surface. Furthermore, equation \eqref{eq:equation_with_2_components} implies the reflection principle \eqref{eq:reflection_principle_section5}.
\end{proof}

\begin{rem}\label{rem:vector_case} We consider here the case when the functions $u_i$ may be vector valued. In this case, we apply the previous results to the positive and negative parts of each amongsts their scalar components. The reflection Lemma \ref{lemma:reflection_principle} still holds and gives equality of  the total variations $\Vert\overline{ \lambda}\Vert(E)=\Vert\overline{\mu}\Vert(E)$ . Consequently, also Theorem \ref {teo:global_equation} holds for the total variations of the measures $\lambda$ and $\mu$. In contrast,  Corollary \ref{coro:gradient_non_zero} in no longer available for the case of vector valued components $u_i$. In order to complete the proof, we have to exploit  the iterative argument introduced in \cite{CL} in order to improve the flatness of the free boundary. The proof makes use of the boundary regularity theory by Jerison and Kenig and Kenig and Toro in non tangentially accessible and Reifenberg flat domains (see \cite{JK,KT}) and provides $C^{1,\alpha}$ regularity of the regular part of the nodal set. 
\end{rem}
\section{Proof of the main result in any dimension $N\geq 2$: iteration argument.}\label{sec:iteration_argument}

Given $N\geq 2$, by taking in consideration Theorems \ref{teo:Hausdorff_dim_estimates} and \ref{teo:Sigma_U_smooth} as well as Remark \ref{rem:what_do_we_need_to_have_hausdorff_estimates}, we deduce that in order to prove our main result (Theorem \ref{teo:main_result}) it is enough to check the following.

\begin{lemma}\label{lemma:claim} Let $N\geq 2$. Given $\bar U=r^\alpha G(\theta)\in \Geh_\text{loc}(\R^N)$  such that $\Delta \bar U=0$ in $\{\bar U>0\}$, then either $\alpha=1$ or $\alpha\geq 1+\delta_N$ for some universal constant $\delta_N$ depending only on the dimension. Moreover if $\alpha=1$ then $\Gamma_{\bar U}$ is an hyper-plane.
\end{lemma}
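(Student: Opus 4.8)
The plan is to argue by induction on the dimension $N$. The base case $N=2$ is Proposition \ref{prop:N=1_or_N_geq_1+delta} (which yields the admissible value $\delta_2=1/2$) together with Remark \ref{rem:N=2_N=1_nodalsetishyper-plane} for the hyperplane assertion, so I assume $N\ge 3$ and that the lemma is already known in dimensions $2,\dots,N-1$. Given the cone $\bar U=r^\alpha G(\theta)\in\Geh_\text{loc}(\R^N)$, harmonic on $\{\bar U>0\}$, I would first pass to the sphere exactly as in the proof of Proposition \ref{prop:N=1_or_N_geq_1+delta}: on each connected component $A$ of $\{g_i>0\}\subseteq S^{N-1}$ one has $-\Delta_{S^{N-1}}g_i=\lambda\,g_i$ with $\lambda=\alpha(\alpha+N-2)$, hence $\lambda=\lambda_1(A)$, and this first eigenvalue is the same on every component of $\{G>0\}$; moreover $\alpha\mapsto\alpha(\alpha+N-2)$ is strictly increasing on $[1,\infty)$ and equals $N-1=\lambda_1(\text{hemisphere})$ at $\alpha=1$. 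One then splits according to the number of connected components of $\{\bar U>0\}\cap S^{N-1}$, the cases of zero or one component being excluded by Corollary \ref{coro:N_geq_1}, property (G3) and Remark \ref{rem:Gamma_empty_interior} (a single nonzero harmonic phase cannot satisfy the weak reflection law unless its support is a half-space, whose complement has non-empty interior).

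If $\{\bar U>0\}\cap S^{N-1}$ has at least three components, then the smallest one $C$ satisfies $\Hh^{N-1}(C)\le\Hh^{N-1}(S^{N-1})/3$, and spherical symmetrization (Sperner's Theorem) together with the monotonicity of the first eigenvalue of geodesic caps with respect to their measure give $\lambda=\lambda_1(C)\ge\lambda_1(E_N)=:\Lambda_N$, where $E_N$ is the geodesic cap of measure $\Hh^{N-1}(S^{N-1})/3$; since $E_N$ is strictly smaller than a hemisphere, $\Lambda_N>N-1$, so $\alpha\ge\alpha_N$ where $\alpha_N(\alpha_N+N-2)=\Lambda_N$ and $\alpha_N>1$. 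This is the only case producing a gap, so I set $\delta_N:=\alpha_N-1$.

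The heart of the matter — and the main obstacle — is the case of exactly two components $\Omega_1,\Omega_2$ (which cover $S^{N-1}$ up to an $\Hh^{N-1}$–null set, since $\Hh_\text{dim}(\Gamma_{\bar U})\le N-1$ by Theorem \ref{teo:Hausdorff_dim_estimates}): here symmetrization only gives $\alpha\ge 1$, and one must upgrade this to $\alpha=1$ with $\Gamma_{\bar U}$ a hyperplane. My plan is to feed the problem back into Theorem \ref{teo:Sigma_U_smooth} via a \emph{secondary} blowup. Fix $x_0\in\Gamma_{\bar U}\setminus\{0\}$ and a blowup limit $\bar V$ of $\bar U$ at $x_0$; because $\bar U$ is homogeneous of degree $\alpha$ about the origin, $\bar V$ is invariant under translations in the direction $x_0$, so after restriction to $x_0^\perp$ it is a homogeneous function of degree $\beta:=N(x_0,\bar U,0^+)\ge 1$ on $\R^{N-1}$, harmonic on its positive set, still belonging to $\Geh_\text{loc}(\R^{N-1})$ — property (G3) being inherited because in the two–phase situation it is equivalent to equality of the gradient moduli across the interface, a codimension–one condition unaffected by the extra translation invariance. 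Since $\{\bar U>0\}$ has exactly two components near $x_0$, the positive set of $\bar V$ has at most two, and the inductive hypothesis then forces $\beta=1$ (the three–or–more–component alternative being excluded) and $\Gamma_{\bar V}$ a hyperplane. Thus property (P) of Theorem \ref{teo:Sigma_U_smooth} holds on the relatively open set $\Gamma^\star:=\Gamma_{\bar U}\setminus\{0\}$; Theorem \ref{teo:Sigma_U_smooth} and Theorem \ref{teo:global_equation} (with $F\equiv 0$) then show that $w:=\bar u-\bar v$ is harmonic on $\R^N\setminus\{0\}$, Lipschitz and homogeneous of degree $\alpha$, hence — removing the origin — a homogeneous harmonic polynomial of degree $\alpha$ with $\{w>0\}$ and $\{w<0\}$ connected. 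A classical nodal–domain fact (a homogeneous harmonic polynomial of degree $\ge 2$ has at least three nodal domains) forces $\alpha=1$, whence $w$ is linear and $\Gamma_{\bar U}=\{w=0\}$ is a hyperplane through the origin, closing the induction.

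I expect the two genuinely delicate steps to be: (i) the connectivity bookkeeping showing that the secondary blowup limit's positive set inherits the bound of two components from $\bar U$ (so that the inductive dichotomy can be invoked), together with the inheritance of (G3) under the dimension reduction; and (ii) the nodal–domain classification of low–degree homogeneous harmonic polynomials, which is elementary in spirit but is the one spot where a direct computation with spherical harmonics seems unavoidable. I would also keep careful track that the argument is not circular: in dimension $N$ it uses Theorem \ref{teo:Sigma_U_smooth} in dimension $N$ (which is unconditional once (P) is verified) but only the \emph{statement} of the present lemma in dimension $N-1$.
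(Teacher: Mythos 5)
Your overall skeleton matches the paper's: induction on $N$, spherical symmetrization when $\{G>0\}\cap S^{N-1}$ has at least three components, and a secondary blowup at boundary points followed by dimension reduction and Theorem \ref{teo:Sigma_U_smooth} when it has at most two. The gap is in the sentence ``the positive set of $\bar V$ has at most two [components], and the inductive hypothesis then forces $\beta=1$''. The inductive hypothesis is only the dichotomy ``$\beta=1$ or $\beta\geq 1+\delta_{N-1}$''; it carries no information tying the number of connected components of $\{\bar V>0\}$ to the value of $\beta$, so even granting the component count you cannot exclude $\beta\geq 1+\delta_{N-1}$. (Indeed the paper never proves ``two components $\Rightarrow$ degree $1$'' in any dimension: its own two-component analysis only yields $\alpha\in\N$, leaving $\alpha=2,3,\dots$ a priori possible.) Moreover the component count itself is unjustified: a global bound of two on the components of $\{\bar U>0\}$ does not pass to the blowup limit at a boundary point before any regularity of $\Gamma_{\bar U}$ there is known. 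The paper does not try to rule this case out; it absorbs it. If some $y_0\in\Gamma_{\bar U}\cap S^{N-1}$ has $N(y_0,\bar U,0^+)\geq 1+\delta_{N-1}$, then homogeneity of $\bar U$ gives $N(ty_0,\bar U,0^+)=N(y_0,\bar U,0^+)$ for all $t>0$ via \eqref{eq:almgren_identities2}, and upper semicontinuity of $x\mapsto N(x,\bar U,0^+)$ (Corollary \ref{coro:N_upper_semi_continuous}) along $ty_0\to 0$ yields $\alpha=N(0,\bar U,0^+)\geq 1+\delta_{N-1}$, which is an admissible conclusion of the lemma. Only in the complementary case, where $N(y_0,\bar U,0^+)=1$ for every $y_0\in\Gamma_G$, does one verify property (P) and run your Theorem-\ref{teo:Sigma_U_smooth} argument. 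This propagation step is what your proposal is missing.

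A second, smaller problem: the ``classical nodal-domain fact'' you invoke at the end is false as stated. By a theorem of Lewy there exist spherical harmonics on $S^2$ of any odd degree $m\geq 3$ with exactly two nodal domains, hence homogeneous harmonic polynomials of degree $\geq 2$ in $\R^3$ whose positive and negative sets are both connected. Fortunately you do not need it: once $w=\bar u-\bar v$ is entire, harmonic and homogeneous of degree $\alpha$, you get $\alpha\in\N$; if $\alpha=1$ then $w$ is linear and $\Gamma_{\bar U}$ is a hyperplane, while $\alpha\geq 2$ already gives a gap of size $1$, which is all the lemma requires. This is exactly how the paper closes the argument.
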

In fact, assuming for the moment that Lemma \ref{lemma:claim} holds:

\begin{proof}[Proof of Theorem \ref{teo:main_result}] Fix $N\geq 2$, $\Omega\subseteq \R^N$ and let $U\in \Geh(\Omega)$. By Theorem \ref{teo:Hausdorff_dim_estimates}-1 we have $\Hh_\text{dim}(\Gamma_U)\leq N-1$. Next, for each $x_0\in \Omega$, take a blowup sequence $U_k(x)=U(x_0+t_k x)/\rho_k$. Theorem \ref{teo:blow_up_convergence} and Corollary \ref{coro:U_homogeneous} (case 1) together imply the existence of a blowup limit $\bar U=r^\alpha G(\theta)\in \Geh_\text{loc}(\R^N)$ such that $\Delta \bar U=0$ in $\{\bar U>0\}$, and $\alpha=N(x_0, U,0^+)$. Thus we can apply Lemma \ref{lemma:claim} which allows us to deduce that either $N(x_0, U,0^+)=1$ or $N(x_0, U,0^+)\geq 1+\delta_N$, for some universal constant $\delta_N>0$. In this way, being $S_U,\Sigma_U$ the sets defined in Definition \ref{def:singular_regular_set}, we obtain (by repeating exactly the proofs of Corollary \ref{coro:S_U_closed} and Theorem \ref{teo:Hausdorff_dim_estimates}-2) that $S_U$ is closed, $\Sigma_U$ is relatively open in $\Gamma_U$, and that $\Hh_\text{dim}(\Sigma_U)\leq N-2$. Finally, Corollary \ref{coro:U_homogeneous} (case 2) and Lemma \ref{lemma:claim} imply that $\Gamma^\star:=\Sigma_U$ satisfies condition (P) in Theorem \ref{teo:Sigma_U_smooth}, which shows that $\Sigma_U$ is a $C^{1,\alpha}$ hyper-surface and that \eqref{eq:reflection_principle} holds.

Furthermore, in dimension N=2, we know from Theorem \ref{teo:Hausdorff_dim_estimates}-2 that $S_U$ is locally a finite set. For each $y_0\in S_U$ take a small radius such that $S_U\cap B_r(y_0)=\{y_0\}$. Since \eqref{eq:reflection_principle} holds, we can apply the same reasoning of Theorem 9.6 in \cite{CTV3} to the ball $B_r(y_0)$, proving this way that $\Sigma_U\cap B_r(y_0)$ is a finite collection of curves meeting with equal angles at $y_0$, which is a singular point.
\end{proof}

The remainder of this section is devoted to the proof of Lemma \ref{lemma:claim}. Its proof follows by induction in the dimension $N$. For $N=2$ the statement holds by Proposition \ref{prop:N=1_or_N_geq_1+delta} and Remark \ref{rem:N=2_N=1_nodalsetishyper-plane}. Suppose now that the claim holds in dimension $N-1$ and take $\bar U=r^\alpha G(\theta)\in \Geh_\text{loc}(\R^N)$ such that $\Delta \bar U=0$ in $\{\bar U>0\}$. We first treat the case in which the positive set has three or more connected components. In three dimensions the exact value of $\delta_N$ has been proven to be $1/2$ in \cite{HHOT2}.

\begin{lemma}
If $\{G>0\}$ has at least three connected components then there exists an universal constant $\bar \delta_N>0$ such that $\alpha\geq 1+\bar \delta_N$.
\end{lemma}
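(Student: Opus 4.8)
The plan is to extend to arbitrary dimension the argument already carried out for $N=2$ in the proof of Proposition \ref{prop:N=1_or_N_geq_1+delta}, replacing the explicit eigenvalues of circular arcs by a Faber--Krahn (Sperner) inequality on the sphere. Unlike the remaining cases needed for Lemma \ref{lemma:claim}, this one does not use the inductive hypothesis on the dimension: it follows directly.

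First I would record the spectral information. Writing the Euclidean Laplacian in polar coordinates, harmonicity of $\bar u_i=r^\alpha g_i(\theta)$ on $\{\bar u_i>0\}$ is equivalent to $g_i$ being, on each connected component $A$ of $\{g_i>0\}$, a Dirichlet eigenfunction of the Laplace--Beltrami operator on $\partial B_1(0)$ with eigenvalue $\alpha(\alpha+N-2)$, vanishing on $\partial A$. Since $g_i\ge 0$ and $g_i\not\equiv 0$ on $A$, this forces $\lambda_1(A)=\alpha(\alpha+N-2)$, the first Dirichlet eigenvalue of $A$. Because the $g_i$ have pairwise disjoint supports, the connected components of $\{G>0\}$ are exactly those of the individual sets $\{g_i>0\}$, so the number $\alpha(\alpha+N-2)$ equals $\lambda_1(A)$ for every such component $A$.

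Next, by hypothesis $\{G>0\}$ has at least three pairwise disjoint connected components in $\partial B_1(0)$, hence one of them, say $C$, satisfies $\Hh^{N-1}(C)\le \Hh^{N-1}(\partial B_1(0))/3$. By spherical symmetrization (Sperner's Theorem) together with the monotonicity of $\lambda_1$ with respect to the domain,
\[
\alpha(\alpha+N-2)=\lambda_1(C)\ \ge\ \lambda_1(C^*)\ \ge\ \lambda_1\bigl(E(\beta_N)\bigr)=:\Lambda_N ,
\]
where $C^*$ is the geodesic cap with $\Hh^{N-1}(C^*)=\Hh^{N-1}(C)$ and $E(\beta_N)=\{x\in\partial B_1(0):\ \arccos\langle x,e_N\rangle<\beta_N\}$ is the geodesic cap of measure exactly $\Hh^{N-1}(\partial B_1(0))/3$; the constant $\Lambda_N$ depends only on $N$.

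Finally, a hemisphere has measure $\Hh^{N-1}(\partial B_1(0))/2>\Hh^{N-1}(\partial B_1(0))/3$, so $E(\beta_N)$ is strictly contained in a hemisphere, whose first Dirichlet eigenfunction is the restriction of the linear function $x_N=r\cos\theta$, with eigenvalue $1\cdot(1+N-2)=N-1$; strict domain monotonicity then yields $\Lambda_N>N-1$. Since $\alpha\mapsto\alpha(\alpha+N-2)$ is strictly increasing on $[1,\infty)$ and equals $N-1$ at $\alpha=1$ (recall $\alpha\ge 1$ by Corollary \ref{coro:N_geq_1}), the inequality $\alpha(\alpha+N-2)\ge\Lambda_N$ gives $\alpha\ge\alpha_N$, where $\alpha_N\in(1,\infty)$ is the unique root of $\alpha_N(\alpha_N+N-2)=\Lambda_N$; one then sets $\bar\delta_N:=\alpha_N-1>0$. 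The only step requiring genuine care is the Faber--Krahn inequality on the sphere and the resulting strict inequality $\Lambda_N>N-1$, which is precisely what makes the spectral gap uniform with respect to $U$; the rest is elementary and mirrors the two--dimensional case.
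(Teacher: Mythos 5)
Your proof is correct and follows essentially the same route as the paper's: the spectral identity $\lambda_1(A)=\alpha(\alpha+N-2)$ on each component, the pigeonhole choice of a component of measure at most $\Hh^{N-1}(S^{N-1})/3$, Sperner symmetrization with domain monotonicity, and the strict comparison with the hemisphere whose first eigenvalue is $N-1$. If anything, your version is slightly more careful than the paper's, which compares with the caps $E(\pi/2)$ and $E(\pi/3)$ rather than with the cap of measure exactly one third of the sphere; your formulation with $E(\beta_N)$ and the strict inequality $\Lambda_N>N-1$ is the clean way to get the uniform gap.
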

\begin{proof}
We argue exactly as in the first part of the proof of Proposition \ref{prop:N=1_or_N_geq_1+delta} (from which we also recall the definition of $E(\theta)$). Note that for every connected component $A\subseteq \{g_i>0\}\subset S^{N-1}$ it holds
$$
-\Delta_{S^{N-1}}g_i=\lambda g_i \quad \text { in } A,\ \qquad \text{ with } \lambda=\alpha(\alpha+N-2) \text{ and }\lambda=\lambda_1(A).
$$
 At least one of the connected components, say $C$, must satisfy $\Hh^{N-1}(C)\leq \Hh^{N-1}(S^{N-1})/3$, and hence $\lambda=\lambda_1(C)\geq \lambda_1(E(\pi/2))$. Moreover it is well know that $\lambda_1(E(\pi/2))=N-1$. This implies the existence of $\gamma>0$ such that $\lambda_1(E(\pi/3))=N-1+\gamma$, and thus $\alpha=\sqrt{\left(\frac{N-2}{2}\right)^2+\lambda}-\frac{N-2}{2}\geq 1+\bar \delta_N$ for some $\bar \delta_N>0$.
\end{proof}

From now on we suppose that $\{G>0\}$ has at most two connected components. In order to prove Lemma \ref{lemma:claim} the next step is to study the local behaviour of the function $\bar U$ at its non zero nodal points $y_0\in \Gamma_{\bar U}\setminus \{0\}$. This study is accomplished by performing a new blowup analysis. Because $\bar U$ is homogeneous it suffices to take blowup sequences centered at $y_0\in \Gamma_{\bar U}\cap S^{N-1}=\Gamma_G$.

Fix $y_0\in \Gamma_{\bar U}\cap S^{N-1}$ and consider $V_k(x):=\bar U(y_0+t_k x)/\rho_k$ for some $t_k \downarrow 0$ and $\rho_k=\|\bar U(y_0+t_k \cdot )\|_{L^2(\partial B_1(0))}$. Theorem \ref{teo:blow_up_convergence} and Corollary \ref{coro:U_homogeneous} provide the existence of a blowup limit $\bar V=r^\gamma H(\theta)\in \Geh_\text{loc}(\R^N)$, with $\gamma=N(y_0,\bar U,0^+)$. By the homogeneity of $\bar U$ we are able to prove that $\bar V$ actually depends only on $N-1$ variables.

\begin{lemma}\label{lemma:V_does_not_depend_on_x_N}
It holds $\bar V(x+\lambda y_0)=\bar V(x)$ for every $\lambda>0$, $x\in \R^N$.
\end{lemma}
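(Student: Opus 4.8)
The plan is to transfer the homogeneity of $\bar U$ directly onto the rescaled functions $V_k$ and then pass to the limit. First I would record the following exact scaling identity, valid for every $\lambda>0$, every $x\in\R^N$ and every $k$ large enough that $1+t_k\lambda>0$: writing $\mu_k:=1+t_k\lambda$ and using $\bar U(\mu z)=\mu^\alpha\bar U(z)$ for $\mu>0$ (recall $\bar U=r^\alpha G(\theta)$ is homogeneous of degree $\alpha$), and factoring $\mu_k$ out of $\mu_k y_0+t_k x=\mu_k\big(y_0+\tfrac{t_k}{\mu_k}x\big)$,
\[
V_k(x+\lambda y_0)=\frac{\bar U\big(\mu_k y_0+t_k x\big)}{\rho_k}
=\frac{\bar U\big(\mu_k(y_0+\tfrac{t_k}{\mu_k}x)\big)}{\rho_k}
=\mu_k^{\alpha}\,\frac{\bar U\big(y_0+t_k\tfrac{x}{\mu_k}\big)}{\rho_k}
=\mu_k^{\alpha}\,V_k\!\Big(\tfrac{x}{\mu_k}\Big).
\]

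Then I would let $k\to\infty$ along the subsequence for which $V_k\to\bar V$. By Theorem \ref{teo:blow_up_convergence} this convergence is uniform on compact sets, so the left-hand side tends to $\bar V(x+\lambda y_0)$; on the right-hand side $\mu_k^{\alpha}=(1+t_k\lambda)^\alpha\to 1$, while $x/\mu_k\to x$ uniformly on compact sets and $\bar V$ is continuous, so $V_k(x/\mu_k)\to\bar V(x)$. Therefore $\bar V(x+\lambda y_0)=\bar V(x)$ for all $\lambda>0$ and all $x\in\R^N$, which is the claim.

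In truth this statement is a bookkeeping step rather than a genuine difficulty: the only thing to verify is the (routine) passage to the limit, which is immediate from the $C^{0,\alpha}_{\mathrm{loc}}$ convergence of $V_k$ and the continuity of $\bar V$. The real obstacle of the section lies further on — using the translation invariance just obtained, which makes $\bar V$ a blowup profile genuinely depending on only $N-1$ variables, to invoke the inductive hypothesis in dimension $N-1$ and thereby pin down the admissible values of $\gamma=N(y_0,\bar U,0^+)$ and the structure of $\Gamma_{\bar V}$.
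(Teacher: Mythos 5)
Your proposal is correct and follows essentially the same route as the paper: both derive the identity $V_k(x+\lambda y_0)=(1+t_k\lambda)^{\alpha}V_k\bigl(x/(1+t_k\lambda)\bigr)$ from the homogeneity of $\bar U$ and then conclude by a limiting argument. The only (cosmetic) difference is that the paper bounds $|V_k(x+\lambda y_0)-V_k(x)|$ directly using the uniform local $L^\infty$ and $C^{0,\alpha}$ bounds on the $V_k$, whereas you pass to the limit on both sides of the identity using the locally uniform convergence $V_k\to\bar V$ and the continuity of $\bar V$; both steps rest on the same convergence result.
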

\begin{proof}
Fix $x\in \R^N$ and $\lambda>0$. Recall that $V_k\rightarrow \bar V$ in $C^{0,\alpha}_\text{loc}(\R^N)$, which in particular implies pointwise convergence. Hence in particular $V_k(x)\rightarrow \bar V(x)$ and $V_k(x+\lambda y_0)\rightarrow \bar V(x+\lambda y_0)$. In order to prove the lemma it is enough to check that $\lim_k (V_k(x+t y_0)-V_k(x))=0$. From the homogeneity of $\bar U$ one obtains
\begin{eqnarray*}
V_k(x+\lambda y_0) &=& \frac{1}{\rho_k}\bar U(y_0+t_k(x+\lambda y_0))=\frac{1}{\rho_k}\bar U ((1+\lambda t_k )y_0 + t_k x)\\
&=& \frac{(1+\lambda t_k)^\alpha}{\rho_k}\bar U\left(y_0+\frac{t_k}{1+\lambda t_k}x\right)=(1+\lambda t_k)^\alpha V_k\left(\frac{x}{1+t_k\lambda}\right).
\end{eqnarray*}
Take a compact set $K$ containing $x$ and $x/(1+\lambda t_k)$ for large $n$. There exists a constant $C=C(K)$ such that
\begin{eqnarray*}
|V_k(x+\lambda y_0)-V_k(x)|&=&\left|(1+\lambda t_k)^\alpha V_k\left(\frac{x}{1+t_k \lambda}\right)-V_k(x)\right|\\
&\leq& \left|(1+\lambda t_k)^\alpha V_k\left(\frac{x}{1+\lambda t_k}\right)-V_k\left(\frac{x}{1+\lambda t_k}\right)\right|+\left|V_k\left(\frac{x}{1+\lambda t_k }\right)-V_k(x)\right|\\
&\leq& C |(1+\lambda t_k)^\alpha-1| + C\left|\frac{1}{1+\lambda t_k}-1\right|^\alpha |x|^\alpha \rightarrow 0
\end{eqnarray*}
\end{proof}
Next we use the induction hypothesis in order to prove a jump condition of the possible values of $\gamma=N(y_0,\bar U,0^+)$.

\begin{lemma}\label{lemma:gamma=1_or_gamma_geq_1+delta}
With the previous notations either $\gamma\geq 1$ or $\gamma\geq 1+\delta_{N-1}$. Furthermore if $\gamma=1$ then $\Gamma_{\bar V}$ is a hyper-plane.
\end{lemma}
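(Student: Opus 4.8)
The plan is to reduce the statement to dimension $N-1$ and then to invoke the induction hypothesis (Lemma~\ref{lemma:claim} in $\R^{N-1}$). After a rotation we may assume $y_0=e_N=(0,\dots,0,1)$. Lemma~\ref{lemma:V_does_not_depend_on_x_N} gives $\bar V(x+\l e_N)=\bar V(x)$ for every $\l>0$, and the group property (write $x=(x+\l e_N)+(-\l)e_N$) upgrades this to every $\l\in\R$; hence $\bar V(x',x_N)=W(x')$ for some $W\colon\R^{N-1}\to\R^h$. First I would record the properties of $W$: it is Lipschitz, its components are non negative with pairwise disjoint supports, $W\not\equiv 0$ (because $\|\bar V\|_{L^2(\partial B_1(0))}=1$), and $W$ is homogeneous of degree $\g=N(y_0,\bar U,0^+)$, with $\g\geq 1$ by Corollary~\ref{coro:N_geq_1}. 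Moreover, since $\bar V$ is a blowup limit it lies in $\Geh_\text{loc}(\R^N)$ with associated datum $F\equiv 0$, so $-\Delta\bar v_i=-\bar\mu_i$ with $\bar\mu_i\geq 0$ concentrated on $\Gamma_{\bar V}$; the $e_N$--invariance forces $\bar\mu_i=\bar\l_i\otimes\Leh^1$ for non negative Radon measures $\bar\l_i$ on $\R^{N-1}$ concentrated on $\Gamma_W$, and $-\Delta_{x'}w_i=-\bar\l_i$ in $\Deh'(\R^{N-1})$. In particular each $w_i$ is harmonic in $\{w_i>0\}$ and properties (G1)--(G2) hold for $W$ with $F\equiv 0$.

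The core of the argument is to show $W\in\Geh_\text{loc}(\R^{N-1})$, i.e.\ that property (G3) holds for $W$ (necessarily with $F\equiv 0$). By the last part of Theorem~\ref{teo:blow_up_convergence}, (G3) holds for $\bar V$ in the sharp form
\[
\frac{d}{dr}E(x_0,\bar V,r)=\frac{2}{r^{N-2}}\int_{\partial B_r(x_0)}(\partial_\nu\bar V)^2\,d\sigma .
\]
I would fix $x_0'\in\R^{N-1}$, set $x_0=(x_0',0)$, and integrate every ball/sphere integral over sets of $\R^N$ centred at $x_0$ along the invariant direction $x_N$. Using $\bar V(x',x_N)=W(x')$ and $\{v_i>0\}=\{w_i>0\}\times\R$, one obtains, in polar coordinates of $\R^{N-1}$ centred at $x_0'$ and with $\phi(s):=\int_{B_s^{N-1}(x_0')}|\nabla W|^2$, $\psi(s):=\int_{\partial B_s^{N-1}(x_0')}(\partial_\nu W)^2\,d\sigma$,
\[
E(x_0,\bar V,r)=\frac{2}{r^{N-2}}\int_0^r\sqrt{r^2-s^2}\,\phi'(s)\,ds,\qquad \int_{\partial B_r(x_0)}(\partial_\nu\bar V)^2\,d\sigma=\frac{2}{r}\int_0^r\frac{s^2\psi(s)}{\sqrt{r^2-s^2}}\,ds .
\]
On the other hand $E^{N-1}(x_0',W,s)=\phi(s)/s^{N-3}$ is absolutely continuous, so one may set $D(x_0',s):=2s\psi(s)+(N-3)\phi(s)-s\phi'(s)$, and property (G3) for $W$ at $x_0'$ is \emph{equivalent} to $D(x_0',\cdot)\equiv 0$ a.e. Plugging the two displayed expressions into the sharp identity for $\bar V$, differentiating and multiplying by $r^{N-1}/2$, all $\phi$--terms cancel (exactly as in the heuristic computation of \S\ref{subsect:heuristic}) and one is left with
\[
\int_0^r\frac{s\,D(x_0',s)}{\sqrt{r^2-s^2}}\,ds=0\qquad\text{for every }r>0 .
\]
After the substitution $t=s^2$ this is a classical Abel integral equation, whose only locally integrable solution is trivial; hence $D(x_0',\cdot)\equiv 0$ a.e. Since $x_0'\in\R^{N-1}$ was arbitrary, (G3) holds for $W$, and $W\in\Geh_\text{loc}(\R^{N-1})$.

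Finally I would apply the induction hypothesis, Lemma~\ref{lemma:claim} in dimension $N-1$, to $W=\rho^\g\widetilde G(\theta)\in\Geh_\text{loc}(\R^{N-1})$ with $\Delta_{x'}W=0$ in $\{W>0\}$: this yields either $\g=1$ or $\g\geq 1+\d_{N-1}$, and in the case $\g=1$ that $\Gamma_W$ is a hyper-plane of $\R^{N-1}$, whence $\Gamma_{\bar V}=\Gamma_W\times\R e_N$ is a hyper-plane of $\R^N$ (it passes through the origin since $0\in\Gamma_{\bar V}$). The main obstacle is the second step, i.e.\ transferring property (G3) to one dimension less for the cylinder $\bar V=W\otimes 1$: the genuine analytic content there is the Abel--transform cancellation produced by integrating out the invariant variable $x_N$, everything else being bookkeeping of the energy identity; one should also be slightly careful that all the one--dimensional quantities $\phi,\phi',\psi$ are only $L^1_{\rm loc}$ in $s$, so the cancellation and the Abel inversion are to be carried out in the a.e.\ sense.
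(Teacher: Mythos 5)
Your proof follows the same route as the paper's: rotate so that $y_0=e_N$, use Lemma~\ref{lemma:V_does_not_depend_on_x_N} to reduce $\bar V$ to a function $W$ of $N-1$ variables, and invoke the induction hypothesis (Lemma~\ref{lemma:claim} in dimension $N-1$), recovering $\Gamma_{\bar V}=\Gamma_W\times\R e_N$ when $\gamma=1$. The one substantive difference is that the paper merely asserts $\bar V|_{\R^{N-1}\times\{0\}}\in \Geh_\text{loc}(\R^{N-1})$, whereas you actually verify the only non-obvious ingredient, property (G3) for $W$, by integrating out the invariant variable; I checked that computation (the $\phi$-terms do cancel, the identity reduces to $\int_0^r s\,D(x_0',s)(r^2-s^2)^{-1/2}\,ds=0$, and injectivity of the Abel operator on $L^1_{\rm loc}$ gives $D\equiv 0$ a.e.), and it is correct.
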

\begin{proof}
Up to a rotation we can suppose that $y_0=(0,\ldots,0,1)$. Hence by Lemma \ref{lemma:V_does_not_depend_on_x_N} $\bar V(x)=V(x_1,\ldots,x_{N-1})=|(x_1,\ldots,x_{N-1})|^{\gamma}H\left(\frac{(x_1,\ldots,x_{N-1})}{|(x_1,\ldots,x_{N-1})|}\right)$, $\Delta_{\R^{N-1}} \bar V=0$ in $\{\bar V>0\}$ and $\bar V_{|\R^{N-1}\times \{0\}}\in \Geh_\text{loc}(\R^{N-1})$. Hence by the induction hypothesis either $\gamma=1$ or $\gamma\geq 1+\delta_{N-1}$. Moreover if $\gamma=1$ then $\Gamma_{\bar V}\cap \left(\R^{N-1}\times \{0\}\right)$ is an $(N-2)$--dimensional subspace of $\R^{N-1}$ and hence $\Gamma_{\bar V}$ is an hyper-plane in $\R^N$.
\end{proof}

The previous result shows that given $y_0\in \Gamma_{\bar U}\cap S^{N-1}$ then either $N(y_0,\bar U,0^+)=1$ or $N(y_0,\bar U,0^+)\geq 1+\delta_{N-1}$.

\begin{lemma}
Suppose there exists $y_0\in \Gamma_{\bar U}\cap S^{N-1}$ such that $N(y_0,\bar U,0^+)\geq 1+\delta_{N-1}$. Then $\alpha=N(0,\bar U,0^+)\geq 1+\delta_{N-1}$.
\end{lemma}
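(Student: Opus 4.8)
The plan is to pin down $\alpha=N(0,\bar U,0^+)$ by comparing it with the Almgren frequency of $\bar U$ \emph{centered at the nonzero nodal point} $y_0$, evaluated at infinitely large radius. The two ingredients will be: (i) monotonicity of $r\mapsto N(y_0,\bar U,r)$, which bounds it below by its limit as $r\to0^+$; and (ii) identification of its limit as $r\to+\infty$ with $\alpha$, which is exactly where the homogeneity of $\bar U$ about the origin will enter.

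First I would record that, since $\Delta\bar U=0$ in $\{\bar U>0\}$ and $f_i(x,s)=O(s)$ as $s\to0$, we have $f_i(\cdot,\bar u_i)\equiv 0$, hence $\langle F(\cdot,\bar U),\bar U\rangle\equiv 0$ and $R(y_0,\bar U,\cdot)\equiv 0$; moreover each $\bar u_i$ is subharmonic (it solves $\Delta\bar u_i=\bar\mu_i\geq 0$). As in Remark \ref{rem:N_is_increasing}, since $R\equiv 0$ there is no small–radius restriction and $r\mapsto N(y_0,\bar U,r)$ is non-decreasing on all of $(0,+\infty)$, provided $H(y_0,\bar U,r)>0$ for every $r$. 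The latter holds because $H(y_0,\bar U,r)=0$ would force $\bar U\equiv 0$ on $\bar B_r(y_0)$ by the maximum principle, contradicting that $\Gamma_{\bar U}$ has empty interior (Remark \ref{rem:Gamma_empty_interior}). Consequently $N(y_0,\bar U,r)\geq N(y_0,\bar U,0^+)\geq 1+\delta_{N-1}$ for every $r>0$.

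Next I would compute $\lim_{r\to+\infty}N(y_0,\bar U,r)$. Writing $\rho_r=\|\bar U(y_0+r\,\cdot)\|_{L^2(\partial B_1(0))}$ and $V_r(x)=\bar U(y_0+rx)/\rho_r$, the scaling identity \eqref{eq:almgren_identities2} gives $N(y_0,\bar U,r)=N(0,V_r,1)=\int_{B_1(0)}|\nabla V_r|^2$ (the $\langle F,U\rangle$–term in $E$ remaining zero after rescaling, and $\|V_r\|_{L^2(\partial B_1(0))}=1$). Now homogeneity kicks in: $\bar U(y_0+rx)=r^\alpha\bar U(x+y_0/r)$, so $V_r(x)=\bar U(x+y_0/r)/c_r$ with $c_r=\|\bar U(\,\cdot+y_0/r)\|_{L^2(\partial B_1(0))}$, and since $y_0/r\to0$ the continuity of $\bar U$ and continuity of translations in $H^1(B_1(0))$ yield $V_r\to \bar U/c_\infty$ strongly in $H^1(B_1(0))$, where $c_\infty=\|\bar U\|_{L^2(\partial B_1(0))}>0$. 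Hence $N(y_0,\bar U,r)\to c_\infty^{-2}\int_{B_1(0)}|\nabla\bar U|^2=E(0,\bar U,1)/H(0,\bar U,1)=N(0,\bar U,1)=\alpha$, the last equality because a function homogeneous of degree $\alpha$ satisfies $\partial_\nu\bar U=(\alpha/r)\bar U$ on $\partial B_r(0)$, so $N(0,\bar U,\cdot)\equiv\alpha$ (cf. the proof of Corollary \ref{coro:U_homogeneous}). Letting $r\to+\infty$ in the lower bound of the previous paragraph then gives $\alpha\geq 1+\delta_{N-1}$, which is the assertion.

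The step I expect to be the crux is the identification of the ``blow-down'' of $\bar U$ at $y_0$ with a normalization of $\bar U$ itself: this is the only place where the global homogeneity of $\bar U$ about the origin is genuinely exploited, and it is what makes the frequency at $y_0$ comparable to the frequency at the origin. Everything else — the global monotonicity (via $R\equiv 0$) and the explicit value $N(0,\bar U,\cdot)\equiv\alpha$ for a homogeneous function — is routine given the tools already developed.
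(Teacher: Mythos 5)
Your proof is correct, but it follows a different route from the paper's. The paper's argument is very short: from the homogeneity $\bar U(tx)=t^\alpha\bar U(x)$ and the scaling identity \eqref{eq:almgren_identities2} (together with the invariance of $N$ under multiplication by constants) it deduces $N(y_0,\bar U,r)=N(ty_0,\bar U,tr)$, hence $N(ty_0,\bar U,0^+)=N(y_0,\bar U,0^+)\geq 1+\delta_{N-1}$ for every $t>0$, and then lets $t\to 0$ and invokes the upper semi-continuity of $y\mapsto N(y,\bar U,0^+)$ (Corollary \ref{coro:N_upper_semi_continuous}) at the origin. You instead keep the center fixed at $y_0$, use the exact monotonicity of $r\mapsto N(y_0,\bar U,r)$ on all of $(0,+\infty)$ (valid since $R\equiv 0$, cf.\ Remark \ref{rem:N_is_increasing}, once $H(y_0,\bar U,r)>0$ is checked as you do), and identify the blow-down limit $\lim_{r\to+\infty}N(y_0,\bar U,r)=N(0,\bar U,1)=\alpha$. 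The two mechanisms are dual faces of the same scaling identity: indeed your blow-down computation is exactly $N(y_0,\bar U,r)=N(y_0/r,\bar U,1)\to N(0,\bar U,1)$, and you could have shortened the translation-continuity argument by quoting \eqref{eq:almgren_identities2} directly. What the paper's version buys is brevity (it reuses Corollary \ref{coro:N_upper_semi_continuous} and needs no discussion of large radii); what yours buys is a cleaner conceptual statement, namely the standard fact that the frequency of a homogeneous function at an off-center point is monotone in $r$ and increases to the frequency at the homogeneity center, which gives $N(y_0,\bar U,0^+)\leq\alpha$ outright. Two minor points: your justification that $F\equiv 0$ should really appeal to the fact that $\Delta\bar u_i=0$ on $\{\bar u_i>0\}$ together with $f_i(x,0)=0$ forces the associated $F$ in \eqref{eq:system_u_i} to vanish (as in Theorem \ref{teo:blow_up_convergence}), not merely to $f_i(x,s)=O(s)$; and the strong $H^1(B_1(0))$ convergence of $V_r$ needs $\bar U\in H^1(B_2(0))$, which holds since $\bar U\in\Geh_\text{loc}(\R^N)$. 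Neither affects the validity of the argument.
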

\begin{proof}
Take, for every $t>0$, the rescaled function $\bar U_{0,t}(x):= \bar U(tx) =t^\alpha \bar U(x)$. By taking into account identity \eqref{eq:almgren_identities2} we obtain that for every $r>0$
$$
N(y_0,\bar U,r)= N(y_0, t^\alpha \bar U,r)=N(y_0,\bar U_{0,t},r)=N(t y_0,\bar U,t r).
$$
Therefore $N(t y_0,\bar U,0^+)=N(y_0,\bar U,0^+)\geq 1+\delta_{N-1}$ and the conclusion of the lemma follows from the upper semi-continuity of the function $y\mapsto N(y,\bar U,0^+)$ (Corollary \ref{coro:N_upper_semi_continuous}).
\end{proof}

From now on we suppose that the set $\{G>0\}$ has at most two connected components and that $N(y_0,\bar U, 0^+)=1$ for every $y_0\in \Gamma_G$. Let us prove that $\alpha\in \N$ and that if $\alpha=1$ then $\Gamma_{\bar U}$ is an hyper-plane (in the remaining cases we have shown that $\alpha\geq 1+\min\{\bar \delta_N,\delta_{N-1}\})$.

Observe that the second conclusion in Lemma \ref{lemma:gamma=1_or_gamma_geq_1+delta} shows that property (P) holds at every point $y_0\in \Sigma_{\bar U}\cap S^{N-1}=\Gamma_{\bar U}\cap S^{N-1}$. Hence Theorem \ref{teo:Sigma_U_smooth} yields that $\nabla \bar U(y_0)\neq 0$ whenever $y_0\in \Gamma_{\bar U}\cap S^{N-1}$, and in particular $\nabla_\theta \bar U(y_0)\neq 0$ since $\bar U$ is a homogeneous function and $\bar U(y_0)=0$. In this way we conclude that the set $\Gamma_{\bar U}\cap S^{N-1}$ is a compact $(N-2)$-- dimensional sub-manifold of $S^{N-1}$ without boundary, and by a generalization of the Jordan Curve Theorem we conclude that in fact $S^{N-1}\setminus \Gamma_{\bar U}$ - as well as $\R^N\setminus \Gamma_{\bar U}$ - is made of two connected components.

Denote by $\Omega_1,\Omega_2$ the two connected components of $\R^N\setminus \Gamma_{\bar U}$ and respectively by $u,v$ the non trivial components of $\bar U$ in the latter sets. Once again by Theorem \ref{teo:Sigma_U_smooth} we obtain that $\nabla u=-\nabla v$ on $\Gamma_{\bar U}\setminus\{0\}$ and hence $\Delta (u-v)=0$ in $\R^N$, and $(u,v)=r^\alpha G(\theta)$. Thus $\alpha\in \N$ and if $\alpha=1$ then $\nabla (u-v)(0)\neq 0$ and $\Gamma_{\bar U}$ is a hyper-surface.

In conclusion we have proved the conclusion of Lemma \ref{lemma:claim} in any dimension $N$, more precisely we have shown that either $\alpha\geq 1+\min\{2,\bar \delta_N,\delta_{N-1}\}$ or else $\alpha=1$ and $\Gamma_{\bar U}$ is a hyper-plane.


\section{Elliptic operators with variable coefficients}\label{sec:riemannian}
Theorem \ref{teo:main_result} extends to segregated configurations associated with systems of semilinear elliptic equations on Riemannian manifolds, under an appropriate version of the weak reflection law. In order to clarify the geometrical meaning of the weak reflection principle and to understand which version of assumption (G3) makes possible such an extension, we start  with a system of semilinear equations involving the Laplace-Beltrami operator on a Riemannian manifold $M$:
\[-\Delta _M  u_i=f(x,u_i)-\mu_i\;.\] 
We assume that (G1) and (G2) hold and we define the  ``energy'' $\tilde E$ as  
\begin{equation}\nonumber
          \tilde E(r)=\tilde E(x_0,U,r)=\frac{1}{r^{N-2}}\int_{B_r(x_0)}|\nabla_M U|^2 dV_M,
          \end{equation}
{\it  where $B_r(x_0)$ is the geodesic ball of radius $r$}. Let us  choose normal coordinates ${\tilde x}^i$ centered at $x_0$. By Gauss Lemma we know that, denoting by $\rho=\sum_i ({\tilde x}^i)^2$ and $\theta^i$ the radial and angular coordinates,  it holds
 \[g=d\rho^2+\rho^2\sum_{i,j}b_{ij}(\rho,\theta)d\theta^id\theta^j.\]
 
 Notice that the variation with respect to the euclidean metric is purely tangential. Moreover the Christoffel symbols vanish at the origin. In such coordinates, denoting, as usual, $\tilde g_{ij}=g(\partial_i,\partial_j)$ the coefficients of the metric with respect to the normal coordinates, we require that $\tilde E$ satisfies the differential equation:
   \begin{multline}\label{G3_manifolds}
   \frac{d}{dr}\tilde E(x_0,U,r)= \frac{2}{r^{N-2}}\int_{\partial B_r(x_0)}  (\partial_\rho U )^2\, d\sigma_M \\
   +\frac{2}{r^{N-1}}\int_{B_r(x_0)} \rho\sum_{i}\left[f_i(x,u_i)\partial_\rho u_i + \dfrac{1}{\sqrt {\tilde g}}\sum_{k,j}\partial _\rho\left(\sqrt{\tilde g}\, \tilde g^{kj} \right)\partial_k u_i\partial_j u_i\right]\, d V_M.   \end{multline}

 Here $\tilde g=|\det(\tilde g_{kj})|$ and $(\tilde g^{kj})$ is the inverse  of the matrix $(\tilde g_{kj})$. As shown in \cite{GL}, this identity is satisfied also in the case of Lipschitz metrics, by any solution $u$ of the semilinear equation
 \[-\Delta_M u = f(x,u).\]

Similarily, when dealing with varying coefficients elliptic equations $Lu_i= -\textrm{div} (A(x)\nabla u_i)=f(x,u_i)-\mu_i$, we can associate with the coefficient matrix $A$ a metric $g$ in such a way that $A=\sqrt g \left(g^{ij}\right)^{ij}$. We denote by $M$ the associated Riemannian manifold. Next, {\it denoting by $B_r(x_0,r)$ the geodesic balls with respect to such metric}, according with the previous discussion, we  define  the energy as
$$\tilde E(r)=\tilde E(x_0,U,r)=\frac{1}{r^{N-2}}\int_{B_r(x_0)}|\nabla_M U|^2 dV_M=\frac{1}{r^{N-2}}\int_{B_r(x_0)}\langle A(x) \nabla U(x), \nabla U(x)\rangle dx\;.$$
 
Now let us consider again the normal coordinates $(\tilde x^i)^i$ for the metric $g$ and let {\it $\tilde g_{ij}$ be the coefficients of the metric in such normal coordinates}.  The new coefficient matrix $\tilde A=\sqrt{\tilde g}\tilde g^{ij}$ has the radial direction $x-x_0$ as  an eigenvector corresponding to the eigenvalue $\sqrt{\tilde g}$. Obviously in normal coordinates the geodesic ball centered at $x_0$ and the euclidean one coincide and $d\sigma_M=\sqrt{\tilde g}\,ds$ (here $ds$ denotes the standard euclidean metric on the sphere). If $x=\Phi(\tilde x)$, we denote $\tilde U=U\circ \Phi$ and we need that the energy  $\tilde E$ satisfies the differential equation
\begin{multline}\label{G3_variable}
   \frac{d}{dr}\tilde E(x_0,U,r)= \frac{2}{r^{N-2}}\int_{\partial B_r(x_0)}  \sqrt{\tilde g} \,(\partial_\rho \tilde U )^2\, ds\\
   +\frac{2}{r^{N-1}}\int_{B_r(x_0)} \rho\sum_{i}\left[\sqrt{\tilde g}\, f_i(\Phi(\tilde x),\tilde u_i)\partial_\rho \tilde u_i + \sum_{k,j}\partial _\rho\left(\sqrt{\tilde g}\, \tilde g^{kj} \right)\partial_k \tilde u_i\partial_j \tilde u_i\right]\, d\tilde x \end{multline}
   Finally, it is convenient to scale further $\tilde u_i\mapsto \tilde u_i /{\root 4\of {\tilde g}} $, as we prefer to get rid of the Jacobian in the first term of the above identity. The coefficient matrix for the corresponding elliptic equations now is $\tilde A/{\sqrt{\tilde g}}$ and has the radial direction as an eigevector corresponding the the eigenvalue one. 

 The next step is to use any of the two equations \eqref{G3_manifolds} and \eqref{G3_variable} in order to prove Almgen's monotonicity formula. As pointed out in \cite{GL},  this can be done easily once we observe that the last term in the expression of the derivative is actually bounded by a constant times the energy itself (this happens, in general, for Lipschitz  metrics).  The rest of the proof remains unchanged.

\begin{rem}\label{rem:global_Lipschitz}
Having  learned how to extend Theorem \ref{teo:main_result} to the case of variable coefficients operators, we can now examine to which extent there holds regularity of the nodal set up to the boundary, under the regularity assumption $\partial\Omega\in\mathcal C^2$. To do this, we first need to extend the components $u_i$ by reflection through the bondary, exploiting a nonlinear reflection field $\Phi:\widehat \Omega \to \Omega$. Here  $\Phi$ is a $\mathcal C^2$ extension of the identity over $\overline{\Omega}$ in an open neighbourhood $\widehat \Omega$.   We associate with this extension field the metric $g$ having coefficient matrix $d\Phi\cdot d\Phi^*$ with respect to the euclidean coordinates. Then, the compositions $u_i\circ\Phi$ satisfy a system of semilinear elliptic equations involving the Laplace-Beltrami operator with respect to such a metric.  In order to apply Theorem \ref{teo:main_result},  we require  that \eqref{G3_manifolds} holds. A word of caution must be entered at this point: \eqref{G3_manifolds} is expressed in terms of  the coefficients of the metric with respect to the normal coordinates associated with the metric. Hence, in order to check its validity, a further change of coordinates is needed. Fortunately, we never check it directly in the applications, for we rather argue indirectly, passing to the limit in  the approximating procedure.

\end{rem}


\section{Applications.}\label{sec:applications}

In this last section we provide two applications of the previously developed theory. In both cases we prove that the functions in consideration belong to the class $\Geh(\Omega)$, and hence Theorem \ref{teo:main_result} applies.

\subsection{Asymptotic limits of a system of Gross-Pitaevskii equations}\label{subsec:application_BEC}

Consider the following system of nonlinear Schr\"odinger equations
\begin{equation}\label{eq:BEC}
\left\{\begin{array}{l}
-\Delta u_i +\lambda_i u_i = \omega_i u_i^3 - \beta u_i \sum_{j\neq i}\beta_{ij} u_j^2\\
u_i\in H^1_0(\Omega),\ u_i>0 \text{ in } \Omega.
\end{array}\right.
\qquad i=1,\ldots,h,
\end{equation}
in a smooth bounded domain $\Omega\subset \R^N$, $N=2,3$. Such a system arises in the theory of Bose-Einstein condensation (we refer to \cite{CLLL} and references therein). Here we consider $\beta_{ij}=\beta_{ji}\neq 0$ (which gives a variational structure to the problem) and take $\lambda_i,w_i\in \R$ and $\beta\in (0,+\infty)$ large. The existence of solutions for $\beta$ large is still an open problem for some choices of $\lambda_i,w_i$; for recent works on the subject see for instance \cite{DWW,NorisRamos,NTTV2} and references therein.

One of the many interesting questions about system \eqref{eq:BEC} is the asymptotic study of its solutions as $\beta\rightarrow +\infty$ (which represents an increasing of the interspecies scattering length), namely the regularity study of the limiting profiles. In the paper \cite{uniform_holder}, in collaboration with Noris and Verzini, we have proved $C^{0,\alpha}$-- bounds (for all $0<\alpha<1$) for any given $L^{\infty}$--bounded family of solutions $U_\beta=(u_{1,\beta},\ldots,u_{h,\beta})$ of \eqref{eq:BEC}. Moreover the possible limit configurations $U=\lim_{\beta\rightarrow +\infty} U_\beta$ are proved to be Lipschitz continuous. The mentioned paper contains the proof of the following fact.

\begin{teo}\label{teo:BEC_G(Omega)}
Let $U$ be a limit as $\beta\rightarrow +\infty$ of a family $\{U_\beta\}$ of $L^\infty$--bounded solutions of \eqref{eq:BEC}. Then $U\in \Geh(\Omega)$.
\end{teo}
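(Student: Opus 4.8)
The strategy is to recover every defining property of the class $\Geh(\Omega)$ by passing to the limit in the approximating system. Write $f_i(s)=-\lambda_i s+\omega_i s^3$ and $M_{i,\beta}:=\beta\,u_{i,\beta}\sum_{j\neq i}\beta_{ij}u_{j,\beta}^2\ge 0$, so that each $u_{i,\beta}$ --- smooth by elliptic regularity --- solves $-\Delta u_{i,\beta}=f_i(u_{i,\beta})-M_{i,\beta}$ in $\Omega$. From the facts recalled before the statement (see \cite{uniform_holder} and the theory of strongly competing systems) we may assume, along a subsequence, that $U_\beta\to U$ strongly in $H^1_\text{loc}(\Omega)$ and in $C^{0,\alpha}_\text{loc}(\Omega)$ for every $0<\alpha<1$, with $U$ non negative, Lipschitz in the interior, segregated ($u_iu_j\equiv0$ for $i\neq j$) and $U\not\equiv0$; moreover the masses $M_{i,\beta}(K)$ are bounded uniformly on compact sets $K\Subset\Omega$, $M_{i,\beta}\rightharpoonup\mu_i$ weak-$\star$ in $\Mah_\text{loc}(\Omega)$ with $\mu_i\ge0$ a Radon measure concentrated on $\Gamma_U$, and $-\Delta u_i=f_i(u_i)-\mu_i$ in $\Deh'(\Omega)$. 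Since the $f_i$ are of class $C^1$ and satisfy $f_i(s)=O(s)$ as $s\to0$, (G1) holds, and the last line is precisely (G2); thus only (G3) remains to be checked.

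To obtain (G3) I would first write the Poho\v zaev--Rellich identity for $U_\beta$, which is legitimate because each $u_{i,\beta}$ is smooth and solves its equation on all of $\Omega$. Fix $x_0\in\Omega$ and $0<r<\text{dist}(x_0,\partial\Omega)$. Applying \eqref{eq:Rellich} to $u=u_{i,\beta}$, integrating over $B_r(x_0)$, summing over $i$ and combining with the elementary identity \eqref{eq:derivative_tilde_E} applied to each component, one gets, exactly as in \S\ref{subsect:heuristic}, that for $\tilde E_\beta(r)=r^{2-N}\int_{B_r(x_0)}|\nabla U_\beta|^2$
\begin{equation*}
\tilde E_\beta'(r)=\frac{2}{r^{N-2}}\int_{\partial B_r(x_0)}(\partial_\nu U_\beta)^2\,d\sigma+\frac{2}{r^{N-1}}\int_{B_r(x_0)}\sum_i\bigl(f_i(u_{i,\beta})-M_{i,\beta}\bigr)\langle\nabla u_{i,\beta},x-x_0\rangle.
\end{equation*}
The key algebraic point is that, by the symmetry $\beta_{ij}=\beta_{ji}$, the competition term is a gradient: $\sum_i M_{i,\beta}\nabla u_{i,\beta}=\beta\sum_{i\neq j}\beta_{ij}u_{j,\beta}^2\,u_{i,\beta}\nabla u_{i,\beta}=\tfrac12\nabla W_\beta$ with $W_\beta:=\beta\sum_{i<j}\beta_{ij}u_{i,\beta}^2u_{j,\beta}^2\ge0$, so that an integration by parts gives $\int_{B_r(x_0)}\sum_i M_{i,\beta}\langle\nabla u_{i,\beta},x-x_0\rangle=\tfrac12\bigl(r\int_{\partial B_r(x_0)}W_\beta\,d\sigma-N\int_{B_r(x_0)}W_\beta\bigr)$. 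Inserting this, $\tilde E_\beta'(r)$ equals the right-hand side of the identity in (G3) with each $u_i$ replaced by $u_{i,\beta}$, plus the extra terms $-\,r^{2-N}\int_{\partial B_r(x_0)}W_\beta\,d\sigma+N r^{1-N}\int_{B_r(x_0)}W_\beta$.

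The last step is to integrate this over an arbitrary interval $r\in(r_1,r_2)$ and let $\beta\to+\infty$. Strong $H^1_\text{loc}$ convergence gives $\tilde E_\beta(r)\to\tilde E(r)=r^{2-N}\int_{B_r(x_0)}|\nabla U|^2$; the uniform and $H^1_\text{loc}$ convergence together with $f_i\in C^1$ give $\int_{B_r(x_0)}\sum_i f_i(u_{i,\beta})\langle\nabla u_{i,\beta},x-x_0\rangle\to\int_{B_r(x_0)}\sum_i f_i(u_i)\langle\nabla u_i,x-x_0\rangle$ for every $r$, with a domination in $r$; and, extracting a further subsequence exactly as in the \textit{End of the proof of Theorem \ref{teo:blow_up_convergence}}, one has $\int_{\partial B_r(x_0)}(\partial_\nu U_\beta)^2\,d\sigma\to\int_{\partial B_r(x_0)}(\partial_\nu U)^2\,d\sigma$ for a.e.\ $r$ with an $L^1(r_1,r_2)$ bound. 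It then remains to see that the two $W_\beta$-terms disappear; by the coarea formula and Fubini (using $r_1>0$) this reduces to $\int_{B_{r_2}(x_0)}W_\beta\to0$, i.e.\ to the segregation-rate estimate $\beta\int_K u_{i,\beta}^2u_{j,\beta}^2\to0$ for $K\Subset\Omega$, which follows by testing $-\Delta u_{i,\beta}=f_i(u_{i,\beta})-M_{i,\beta}$ against $u_{i,\beta}\varphi$ ($\varphi\ge0$ a cut-off), summing over $i$ and passing to the limit: this yields $2\lim_\beta\int W_\beta\varphi=\sum_i\int u_i\varphi\,d\mu_i=0$ since each $\mu_i$ is supported on $\Gamma_U=\{U=0\}$ (alternatively one quotes this directly from \cite{uniform_holder}). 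Passing to the limit in the integrated identity one obtains, for all $0<r_1<r_2<\text{dist}(x_0,\partial\Omega)$,
\begin{equation*}
\tilde E(r_2)-\tilde E(r_1)=\int_{r_1}^{r_2}\Bigl(\frac{2}{r^{N-2}}\int_{\partial B_r(x_0)}(\partial_\nu U)^2\,d\sigma+\frac{2}{r^{N-1}}\int_{B_r(x_0)}\sum_i f_i(u_i)\langle\nabla u_i,x-x_0\rangle\Bigr)\,dr,
\end{equation*}
and, since $|\nabla U|^2\in L^1_\text{loc}$ makes $\tilde E(x_0,U,\cdot)$ absolutely continuous, differentiating in $r$ gives (G3); hence $U\in\Geh(\Omega)$. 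The main obstacle is precisely this last point --- the vanishing of the competition contribution $W_\beta$ in $L^1_\text{loc}$, which is the exact sense in which, in the strong-competition limit, the Dirichlet energy does not concentrate away from the nodal set; everything else is a routine passage to the limit of the kind already carried out in Section \ref{sec:blow_up_sequences}.
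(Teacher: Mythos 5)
Your proof is correct and follows essentially the same route as the paper: both derive a Poho\v zaev--Rellich identity for the approximating solutions $U_\beta$, identify the competition contribution as (a multiple of) $\nabla W_\beta$ with $W_\beta=\beta\sum_{i<j}\beta_{ij}u_{i,\beta}^2u_{j,\beta}^2$, and pass to the limit in the integrated identity exactly as at the end of the proof of Theorem \ref{teo:blow_up_convergence}, using strong $H^1\cap C^{0,\alpha}$ convergence and $\int_K W_\beta\to 0$. The only cosmetic differences are that the paper absorbs the interaction term into a modified energy $E_\beta$ instead of carrying it as an explicit remainder, and quotes \cite[Theorem 1.2]{uniform_holder} for $\beta\int u_{i,\beta}^2u_{j,\beta}^2\to 0$ where you supply the short self-contained argument.
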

\begin{proof}
For $f_i(x,s)=f_i(s)=\omega_i s^3-\lambda_i s$, Theorem 1.2 in \cite{uniform_holder} implies that $U$ satisfies each property in the definition of the class $\Geh(\Omega)$ except for (G3). The fact that this latter property is also satisfied is the content of the first part of the proof of Proposition 4.1 in \cite{uniform_holder}. The procedure is the following: defining an approximated ``energy'' associated with system \eqref{eq:BEC} - which has a variational structure-,
$$
E_\beta(r)=\frac{1}{r^{N-2}}\int_{B_r(x_0)}\left(|\nabla U_\beta|^2-\langle F(U_\beta),U_\beta \rangle \right)+\int_{B_r(x_0)}2\beta \sum_{i<j}u_{i,\beta}^2 u_{j,\beta}^2
$$
by a direct calculation it holds
\begin{multline*}
E_\beta'(r) = \frac{2}{r^{N-2}}\int_{\partial B_r(x_0)}\left( \partial_\nu U_\beta \right)^2\, d\sigma + \frac{2}{r^{N-1}}\int_{B_r(x_0)}\sum_i f_i(u_{i,\beta})\langle \nabla u_{i,\beta},x-x_0\rangle+\\
+ \frac{1}{r^{N-1}}\int_{B_r(x_0)}(N-2)\langle F(U_\beta),U_\beta \rangle - \frac{1}{r^{N-2}}\int_{\partial B_r(x_0)}\langle F(U_\beta),U_\beta\rangle\, d\sigma+\\
+\frac{4-N}{r^{N-1}}\int_{B_r(x_0)}\beta \sum_{i<j}u_{i,\beta}^2 u_{j,\beta}^2 + \int_{\partial B_r(x_0)} \beta \sum_{i<j}u_{i,\beta}^2 u_{j,\beta}^2\,d\sigma.
\end{multline*}
By \cite[Theorem 1.2]{uniform_holder} we obtain strong convergence $U_\beta\rightarrow U$ in $H^1\cap C^{0,\alpha}(\Omega)$ for every $0<\alpha<1$, and $\int_{\Omega} \beta \sum_{i<j}u_{i,\beta}^2 u_{j,\beta}^2\rightarrow 0$. Hence, as $\beta\rightarrow +\infty$, we prove that $U$ satisfies (G3) exactly in the same way we did at the end of the proof of Theorem \ref{teo:blow_up_convergence}.
\end{proof}

Hence Theorems  \ref{teo:main_result} and \ref{teo:BEC_G(Omega)} provide a new regularity result for asymptotic limits of general families of uniformly bounded excited state solutions of \eqref{eq:BEC}. We observe once again that Caffarelli and Lin obtained in \cite{CL} a result that is similar to our Theorem \ref{teo:main_result}, but only for the case when $U$ is a solution of \eqref{eq:BEC} having minimal energy.
\subsection{The class $\Seh(\Omega)$.}\label{subsec:class_S}

The second author of this paper, working in collaboration with Conti and Verzini, introduced in \cite{CTV2, CTV3} the following functional class:
\begin{multline*}
\Seh(\Omega)=\left\{ (u_1,\ldots, u_h)\in \left(H^1(\Omega)\right)^h:\ u_i\geq 0 \text{ in } \Omega,\ u_i\cdot u_j=0 \text{ if } i\neq j \text{ and } -\Delta u_i\leq f_i(u_i), \right.\\
\left.-\Delta( u_i-\sum_{j\neq i} u_j)\geq f_i(u_i)-\sum_{j\neq i} f_j(u_j) \text{ in }\Omega \text{ in the distributional sense}\right\}.
\end{multline*}
Here we make the following assumptions on the functions $f_i$:
\begin{itemize}
\item $f_i:\R^+\rightarrow \R$ Lipschitz continuous and $f_i(0)=0$;
\item there exists a constant $a<\lambda_1(\Omega)$ such that $|f_i(s)|\leq a s$ for every $x\in \Omega$, $s\geq \bar s>>1$.
\end{itemize}
This allows the use of the results of \cite{CTV3}. We stress that the conclusions of this subsection actually hold true for other different types of functions $f_i$, as for example the ones considered in \cite{CTV2}.

As observed in \cite{CTV2,CTV3,CTV4,CTV5,Caff_Aram_Lin,Zhitao,HHOT,HHOT2}, the class $\Seh(\Omega)$ is related to the asymptotic limits of reaction diffusion systems with a Lotka-Volterra--type competition term, as well to certain optimizations problems. We will recall some of these relations in the end of this subsection. The regularity results of Theorem \ref{teo:main_result} hold true for the elements of $\Seh(\Omega)$, as a byproduct of the following result.
\begin{teo}\label{teo:S_contained_in_G}$S(\Omega)\subseteq \Geh(\Omega)$.
\end{teo}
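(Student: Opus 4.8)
The plan is to verify, for an arbitrary nontrivial $U=(u_1,\dots,u_h)\in\Seh(\Omega)$, each of the three defining properties (G1)--(G3) of $\Geh(\Omega)$ in turn. Property (G1) is immediate: the functions $f_i$ occurring in the definition of $\Seh(\Omega)$ are Lipschitz with $f_i(0)=0$ (and carry no $x$--dependence), so $|f_i(s)|\le Ls=O(s)$ uniformly; their being merely Lipschitz rather than $C^1$ is immaterial, since Sections 2--6 only use the bound $|f_i(s)|\le ds$ near the origin and the fact that $f_i(u_i)\in L^\infty$ for bounded $u_i$. Moreover the growth hypothesis $|f_i(s)|\le as$ for large $s$, with $a<\lambda_1(\Omega)$, gives $U\in L^\infty(\Omega)$ by a Brezis--Kato argument, after which $U$ is locally Lipschitz in $\Omega$ by the regularity theory of \cite{CTV3}; so the Lipschitz requirement in the definition of $\Geh(\Omega)$ is met as well.

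For (G2) and for the system \eqref{eq:system_u_i}, set $\mu_i:=\Delta u_i+f_i(u_i)$. The first inequality defining $\Seh(\Omega)$ says exactly that $\mu_i\ge0$ in $\Deh'(\Omega)$, hence $\mu_i$ is a nonnegative Radon measure. On the open set $\{u_i>0\}$ the disjointness $u_iu_j\equiv0$ forces $u_j\equiv0$ for every $j\ne i$, so the second inequality reduces there to $-\Delta u_i\ge f_i(u_i)$; combined with the first it gives $-\Delta u_i=f_i(u_i)$, that is $\mu_i\equiv0$, on $\{u_i>0\}$, while $\mu_i\equiv0$ trivially on $\operatorname{int}\{u_i=0\}$. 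Since no boundary point of $\{u_i>0\}$ can lie in any $\{u_j>0\}$, one gets $\operatorname{supp}\mu_i\subseteq\partial\{u_i>0\}\subseteq\Gamma_U$, and \eqref{eq:system_u_i} holds with these $\mu_i$.

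The substantial point is (G3). For two components one can argue directly: the inequality in the definition of $\Seh(\Omega)$ for the index $1$ reads $-\Delta(u_1-u_2)\ge f_1(u_1)-f_2(u_2)$, the one for the index $2$ rearranges to $-\Delta(u_1-u_2)\le f_1(u_1)-f_2(u_2)$, and therefore $w:=u_1-u_2$ solves the single semilinear equation $-\Delta w=f_1(w^+)-f_2(w^-)$ in $\Deh'(\Omega)$, the two measures $\mu_1,\mu_2$ having cancelled. Its right-hand side is a Lipschitz function of $w$ which is $O(w)$ at the origin, so $w\in C^{1,\alpha}_{\mathrm{loc}}(\Omega)$, and the Rellich/Poho\v zaev computation of \S\ref{subsect:heuristic}, performed on $w$ (it is valid in this $W^{2,p}_{\mathrm{loc}}$ setting by approximation), shows precisely that $(w^+,w^-)=(u_1,u_2)\in\Geh(\Omega)$. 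For $h\ge3$ this cancellation is lost and I would instead argue by approximation, in the same spirit as the proof of Theorem \ref{teo:BEC_G(Omega)}: by \cite{CTV3} the configuration $U$ is a strong $H^1_{\mathrm{loc}}\cap C^{0,\alpha}_{\mathrm{loc}}$ limit, as $\beta\to+\infty$, of solutions $U_\beta$ of the Lotka--Volterra competition system $-\Delta u_{i,\beta}=f_i(u_{i,\beta})-\beta u_{i,\beta}\sum_{j\ne i}u_{j,\beta}$, with $\beta\,u_{i,\beta}u_{j,\beta}\to0$ in $L^1_{\mathrm{loc}}$; a Poho\v zaev--type identity for the $\beta$--problem — multiply the $i$--th equation by $\langle\nabla u_{i,\beta},x-x_0\rangle$, integrate over $B_r(x_0)$ via the Rellich identity \eqref{eq:Rellich}, and sum over $i$ — produces an identity of the shape \eqref{lemma_E'_expression} for an approximate energy $E_\beta$, carrying extra $\beta$--dependent terms controlled by integrals of $\beta\,u_{i,\beta}u_{j,\beta}$ over $B_r(x_0)$ and $\partial B_r(x_0)$; letting $\beta\to+\infty$, using the strong convergence on almost every sphere (with a domination argument as at the end of the proof of Theorem \ref{teo:blow_up_convergence}), kills those terms and yields \eqref{lemma_E'_expression} for $U$.

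The main obstacle is exactly (G3) in the multi-component case, where two points require care: (i) one must invoke the approximation result of \cite{CTV3} to know that an arbitrary element of $\Seh(\Omega)$ is indeed such a competition--diffusion limit; and (ii) one must organise the Poho\v zaev identity for the Lotka--Volterra $\beta$--system so that the $\beta$--dependent remainders are genuinely of the form $\beta\int u_{i,\beta}u_{j,\beta}(\cdots)$ — the delicate bookkeeping point being that, unlike in the Gross--Pitaevskii case, the competition term and the interaction term in the natural energy do not scale in the same way, so the correction term added in $E_\beta$ has to be chosen with some care. Once (G3) is in hand, $U\in\Geh(\Omega)$, and Theorem \ref{teo:main_result} applies to every element of $\Seh(\Omega)$.
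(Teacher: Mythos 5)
Your verification of (G1)--(G2) and your two--component argument for (G3) are fine: for $h=2$ the two distributional inequalities defining $\Seh(\Omega)$ do cancel the measures, $w=u_1-u_2$ solves the single semilinear equation $-\Delta w=f_1(w^+)-f_2(w^-)$, and the Rellich computation of \S\ref{subsect:heuristic} then gives (G3). The problem is the case $h\ge 3$, which you correctly identify as the crux but do not actually resolve. Your proposed remedy rests on the claim that an arbitrary element of $\Seh(\Omega)$ is a strong $H^1$--limit of solutions of the Lotka--Volterra system \eqref{eq:lotka_volterra_system}. No such result is available: what \cite{CTV5} (quoted in \S\ref{subsec:class_S}) proves is the \emph{opposite} inclusion, namely that limits of Lotka--Volterra solutions belong to $\Seh(\Omega)$. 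The class $\Seh(\Omega)$ also contains, e.g., suitably normalized eigenfunctions of optimal partitions, and there is no characterization of its elements as competition--diffusion limits. On top of that, the second difficulty you flag is real and unresolved: since the interaction $\beta u_i\sum_{j\ne i}u_j$ is not the gradient of a symmetric potential, the term $\sum_i 2\langle x-x_0,\nabla u_i\rangle\,\beta u_i\sum_{j\ne i}u_j$ in the Rellich identity does not reorganize into a divergence plus remainders controlled by $\beta u_iu_j$, unlike the Gross--Pitaevskii case treated in Theorem \ref{teo:BEC_G(Omega)}. So the approximation route, as proposed, does not close.

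The paper's proof of (G3) is instead direct and works for all $h$ simultaneously; it is worth recording because it is exactly the device that replaces your missing approximation step. One applies the Rellich identity \eqref{eq:Pohoazaev} to each $u_i$ on the regular superlevel sets $\{u_i>\delta\}\cap B_r(x_0)$ (regular for a.e.\ $\delta$ by Sard), sums over $i$, and lets $\delta\to0^+$. This produces \eqref{lemma_E'_expression} up to the extra boundary term
$\frac1r\sum_i\limsup_{\delta\to0^+}\int_{B_r(x_0)\cap\partial\{u_i>\delta\}}|\nabla u_i|^2\langle\nu,x-x_0\rangle\,d\sigma$,
and the whole point is to show this vanishes. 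Fix $\varepsilon>0$ and split $\partial\{u_i>\delta\}$ according to the set $S_\varepsilon=\{x\in\Omega\setminus\Gamma_U:\ \sum_i|\nabla u_i|\le\varepsilon\}$. On the part inside $S_\varepsilon$ the integrand is bounded by $C\varepsilon|\nabla u_i|$, and harmonicity of $u_i$ on $\{u_i>0\}$ gives the uniform bound $\int_{B_r(x_0)\cap\partial\{u_i>\delta\}}|\nabla u_i|\,d\sigma\le C$ independently of $\delta$, so this contribution is $O(\varepsilon)$. On the complement, every point of $\Gamma_U\cap\overline{\Omega\setminus S_\varepsilon}$ has $\nabla U\ne0$ (using the continuity of $|\nabla U|$ from \cite[Lemma 14]{Caff_Aram_Lin}), hence in a neighbourhood of such a point exactly two components $u_i,u_j$ survive and the definition of $\Seh(\Omega)$ gives $\Delta(u_i-u_j)=0$ there — this is precisely your $h=2$ cancellation, used \emph{locally}. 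The boundary contributions from $u_i$ and $u_j$ then cancel pairwise as in Lemma \ref{lemma:reflection_principle}, and letting $\varepsilon\to0$ finishes the argument. In short: the local two--by--two structure of the interfaces away from the small--gradient set is what makes your two--component computation suffice for arbitrary $h$, without any global approximation.
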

\begin{proof}
By the results proved in \cite{CTV3}, in order to obtain the desired conclusion the remaining thing to prove is that each $U\in \Geh(\Omega)$ satisfies property (G3). To prove it we follow the ideas contained in \cite[Theorem 15]{Caff_Aram_Lin}. Consider $\delta>0$ in such a way that each set $\{u_i>\delta\}$ is regular; moreover take $x_0\in \Omega$ and $r>0$. For simplicity we consider $F\equiv 0$. By using once again the Poh\u ozaev--type identity \eqref{eq:Pohoazaev} in each set $\{u_i>\delta\}\cap B_r(x_0)$ we obtain, by performing the same computations as in Lemma \ref{lemma:reflection_principle} and by passing to the limit superior as $\delta\rightarrow 0^+$,
$$
\frac{d}{dr}E(x_0,U,r)=\frac{2}{r^{N-2}}\int_{\partial B_r(x_0)}\left( \partial_\nu U\right)^2\, d\sigma + \frac{1}{r}\sum_i \limsup_{\delta\rightarrow 0^+}\int_{B_r(x_0)\cap \partial \{u_i>\delta\}} |\nabla u_i|^2\langle \nu,x-x_0\rangle \, d\sigma.
$$
Now fix $\varepsilon>0$ and define the set $S_\varepsilon=\{x\in\Omega\setminus \Gamma_U:\ \sum_i|\nabla u_i|\leq \varepsilon \}$. Since each component $u_i$ is harmonic in $\{u_i>0\}$, it is easy to prove the existence of a constant $C$ (independent of $\delta$) such that $\int_{B_r(x_0)\cap \partial\{u_i>\delta\} }|\nabla u_i|\, d\sigma\leq C.$ Thus
$$
\lim_{\varepsilon\rightarrow 0}\limsup_{\delta\rightarrow 0^+} \left| \int_{B_r(x_0)\cap \partial \{u_i>\delta\}\cap S_\varepsilon}|\nabla u_i|^2\langle \nu,x-x_0 \rangle \,d\sigma \right|  \leq \lim_{\varepsilon\rightarrow 0} C'\varepsilon=0.
$$
On the other hand at each point $x\in \Gamma_U\cap (\overline{\Omega\setminus S_\varepsilon})$ we have that $\nabla U(x)\neq 0$ (since $|\nabla U|$ is a continuous function, by \cite[Lemma 14]{Caff_Aram_Lin}). Thus in a small neighborhood of such $x$'s there exist exactly two components $u_i$ and $u_j$ (eventually different from point to point) and hence $\Delta (u_i-u_j)=0$, by taking into account the definition of $\Seh(\Omega)$. Therefore
$$
\sum_i \limsup_{\delta\rightarrow 0^+}\int_{B_r(x_0)\cap \partial\{u_i>\delta\}\cap(\Omega\setminus S_\varepsilon)}|\nabla u_i|^2 \langle \nu,x-x_0 \rangle  \,d\sigma =0.
$$
\end{proof}

As previously said, in recent literature it is proved that the solutions to several problems belong to $\Seh(\Omega)$ (in the following we recall two of them). Hence, we believe that Theorem \ref{teo:S_contained_in_G} is of great interest because it unifies several different points of view.

\subsubsection*{Lotka-Volterra competitive interactions}
 Consider the following Lotka-Volterra model for the competition between $h$ different species.
\begin{equation} \label{eq:lotka_volterra_system}
\left\{\begin{array}{l}
-\Delta u_i = f_i(u_i)-\beta u_i\sum_{j\neq i}u_j  \text{ in } \Omega,\\
u_i\geq 0 \text{ in } \Omega,\quad u_i=\varphi_i \text{ on } \partial \Omega.
  \end{array}\right.
\end{equation}
with $\Omega\subset \R^N$ a smooth bounded domain and $\varphi_i$ positive $W^{1,\infty}(\partial \Omega)$--functions with disjoint supports. The asymptotic study of its solutions (as $\beta\rightarrow +\infty$) has been the object of recent research, see for instance \cite{CTV5,Caff_Aram_Lin,Zhitao} and references therein. In \cite[Theorem 1]{CTV5} it is show that all the possible $H^1$--limits $U$ of a given sequence of solutions $\{U_\beta\}_{\beta>0}$ of \eqref{eq:lotka_volterra_system} (as $\beta\rightarrow +\infty$) belong to $\Seh(\Omega)$.

\subsubsection*{Regularity of interfaces in optimal partition problems}  Next we consider some optimal partition problems involving eigenvalues. For any integer $h\geq 0$, we define the set of $h$--partitions of $\Omega$ as
$$\Bger_h=\left\{(\omega_1,\ldots,\omega_h):\ \omega_i \text{ measurable },\ |\omega_i\cap\omega_j|=0 \text{ for }i\neq j \text{ and } \cup_i \omega_i\subseteq \Omega  \right\}. $$Consider the following optimization problems: for any positive real number $p\geq 1$,
\begin{equation}\label{eq:L_k_p}
\Lger_{h,p}:=\inf_{\Bger_h}\left(\frac{1}{h}\sum_{i=1}^h (\lambda_1(w_i))^p\right)^{1/p},
\end{equation}
and
\begin{equation}\label{eq:L_k}
\Lger_h:= \inf_{\Bger_h} \max_{i=1,\ldots,h}(\lambda_1(\omega_i)),
\end{equation}
where $\lambda_1(\omega)$ denotes the first eigenvalue of $-\Delta$ in $H^1_0(\omega)$ in a generalized sense (check \cite[Definition 3.1]{HHOT}). We refer to the papers \cite{CTV4,HHOT,CL_eigenvalues} for a more detailed description of these problems (in \cite{CTV4}, for instance, it is shown that \eqref{eq:L_k} is a limiting problem for \eqref{eq:L_k_p}, in the sense that $\lim_{p\rightarrow +\infty} \Lger_{h,p}=\Lger_h$). Our theory applies to opportune multiples of solutions of \eqref{eq:L_k_p} and \eqref{eq:L_k}. More precisely, in \cite[Lemma 2.1]{CTV4} it is shown that
\begin{itemize}
\item let $p\in[1,+\infty)$ and let $(\omega_1,\ldots,\omega_h)\in \Bger_h$ be any minimal partition associated with $\Lger_{h,p}$ and let $(\phi_i)_i$ be any set of positive eigenfunctions normalized in $L^2$ corresponding to $(\lambda_1(\omega_i))_i$. Then there exist $a_i>0$ such that the functions $u_i=a_i \phi_i$ verify in $\Omega$, for every $i=1,\ldots, h$, the variational inequalities $-\Delta u_i\leq \lambda_1(\omega_i) u_i$ and $-\Delta (u_i-\sum_{j\neq i} u_j)\geq \lambda_1(\omega_i) u_i-\sum_{j\neq i}\lambda_1(\omega_i) u_j$ in the distributional sense;
\end{itemize}
and in \cite[Theorem 3.4]{HHOT}:
\begin{itemize}
\item let $(\tilde \omega_1,\ldots,\tilde \omega_h)\in \Bger_h$ be any minimal partition associated with $\Lger_h$ and let $(\tilde \phi_i)_i$ be any set of positive eigenfunctions normalized in $L^2$ corresponding to $(\lambda_1(\tilde \omega_i))_i$. Then there exist $a_i\geq 0$, not all vanishing, such that the functions $\tilde u_i=a_i \tilde \phi_i$ verify in $\Omega$, for every $i=1,\ldots, h$, the variational inequalities $-\Delta \tilde u_i\leq \Lger_h \tilde u_i$ and $-\Delta (\tilde u_i-\sum_{j\neq i}\tilde u_j)\geq \Lger_h (\tilde u_i-\sum_{j\neq i}\tilde u_j)$ in the distributional sense.
\end{itemize}
In particular the functions $\tilde U=(\tilde u_1,\ldots,\tilde u_h)$ and $U=(u_1,\ldots,u_h)$ belong to $\Seh(\Omega)$.

We refer to the book \cite{BB} for other interesting optimization problems. It is our belief that the solutions to some of these problems should belong to the class $\Geh(\Omega)$.

\noindent {\bf Acknowledgments.}
The first author was supported
by FCT, grant SFRH/BD/28964/2006 and Financiamento
Base 2008 - ISFL/1/209.


\noindent \verb"htavares@ptmat.fc.ul.pt"\\
University of Lisbon, CMAF, Faculty of Science, Av. Prof. Gama Pinto
2, 1649-003 Lisboa, Portugal

\noindent \verb"susanna.terracini@unimib.it"\\
Dipartimento di Matematica e Applicazioni, Universit\`a degli Studi
di Milano-Bicocca, via Bicocca degli Arcimboldi 8, 20126 Milano,
Italy

\end{document}